\documentclass[12pt]{amsart}
\usepackage{amsmath, amscd}
\usepackage{amsfonts, amssymb, mathtools}
\usepackage{xcolor}

\setlength{\textheight}{20cm} \textwidth16cm \hoffset=-2truecm
\begin{document}
\numberwithin{equation}{section}

\def\1#1{\overline{#1}}
\def\2#1{\widetilde{#1}}
\def\3#1{\widehat{#1}}
\def\4#1{\mathbb{#1}}
\def\5#1{\frak{#1}}
\def\6#1{{\mathcal{#1}}}

\newcommand{\w}{\omega}
\newcommand{\Lie}[1]{\ensuremath{\mathfrak{#1}}}
\newcommand{\LieL}{\Lie{l}}
\newcommand{\LieH}{\Lie{h}}
\newcommand{\LieG}{\Lie{g}}
\newcommand{\de}{\partial}
\newcommand{\R}{\mathbb R}
\newcommand{\FH}{{\sf Fix}(H_p)}
\newcommand{\al}{\alpha}
\newcommand{\tr}{\widetilde{\rho}}
\newcommand{\tz}{\widetilde{\zeta}}
\newcommand{\tk}{\widetilde{C}}
\newcommand{\tv}{\widetilde{\varphi}}
\newcommand{\hv}{\hat{\varphi}}
\newcommand{\tu}{\tilde{u}}
\newcommand{\tF}{\tilde{F}}
\newcommand{\debar}{\overline{\de}}
\newcommand{\Z}{\mathbb Z}
\newcommand{\C}{\mathbb C}
\newcommand{\Po}{\mathbb P}
\newcommand{\zbar}{\overline{z}}
\newcommand{\G}{\mathcal{G}}
\newcommand{\So}{\mathcal{S}}
\newcommand{\Ko}{\mathcal{K}}
\newcommand{\U}{\mathcal{U}}
\newcommand{\B}{\mathbb B}
\newcommand{\NC}{\mathcal N\mathcal C}
\newcommand{\oB}{\overline{\mathbb B}}
\newcommand{\Cur}{\mathcal D}
\newcommand{\Dis}{\mathcal Dis}
\newcommand{\Levi}{\mathcal L}
\newcommand{\SP}{\mathcal SP}
\newcommand{\Sp}{\mathcal Q}
\newcommand{\A}{\mathcal O^{k+\alpha}(\overline{\mathbb D},\C^n)}
\newcommand{\CA}{\mathcal C^{k+\alpha}(\de{\mathbb D},\C^n)}
\newcommand{\Ma}{\mathcal M}
\newcommand{\Ac}{\mathcal O^{k+\alpha}(\overline{\mathbb D},\C^{n}\times\C^{n-1})}
\newcommand{\Acc}{\mathcal O^{k-1+\alpha}(\overline{\mathbb D},\C)}
\newcommand{\Acr}{\mathcal O^{k+\alpha}(\overline{\mathbb D},\R^{n})}
\newcommand{\Co}{\mathcal C}
\newcommand{\Hol}{{\sf Hol}(\mathbb H, \mathbb C)}
\newcommand{\Aut}{{\sf Aut}(\mathbb D)}
\newcommand{\D}{\mathbb D}
\newcommand{\oD}{\overline{\mathbb D}}
\newcommand{\oX}{\overline{X}}
\newcommand{\loc}{L^1_{\rm{loc}}}
\newcommand{\la}{\langle}
\newcommand{\ra}{\rangle}
\newcommand{\thh}{\tilde{h}}
\newcommand{\N}{\mathbb N}
\newcommand{\kd}{\kappa_D}
\newcommand{\Ha}{\mathbb H}
\newcommand{\ps}{{\sf Psh}}
\newcommand{\Hess}{{\sf Hess}}
\newcommand{\subh}{{\sf subh}}
\newcommand{\harm}{{\sf harm}}
\newcommand{\ph}{{\sf Ph}}
\newcommand{\tl}{\tilde{\lambda}}
\newcommand{\gdot}{\stackrel{\cdot}{g}}
\newcommand{\gddot}{\stackrel{\cdot\cdot}{g}}
\newcommand{\fdot}{\stackrel{\cdot}{f}}
\newcommand{\fddot}{\stackrel{\cdot\cdot}{f}}
\def\v{\varphi}
\def\Re{{\sf Re}\,}
\def\Im{{\sf Im}\,}
\def\rk{{\rm rank\,}}
\def\rg{{\sf rg}\,}
\def\Gen{{\sf Gen}(\D)}
\def\Pl{\mathcal P}

\newtheorem{theorem}{Theorem}[section]
\newtheorem{lemma}[theorem]{Lemma}
\newtheorem{proposition}[theorem]{Proposition}
\newtheorem{corollary}[theorem]{Corollary}

\theoremstyle{definition}
\newtheorem{definition}[theorem]{Definition}
\newtheorem{example}[theorem]{Example}

\theoremstyle{remark}
\newtheorem{remark}[theorem]{Remark}
\numberwithin{equation}{section}



\def\id{{\sf id}}
\newcommand{\Real}{\mathbb{R}}
\newcommand{\Natural}{\mathbb{N}}
\newcommand{\Complex}{\mathbb{C}}
\newcommand{\ComplexE}{\overline{\mathbb{C}}}
\newcommand{\Int}{\mathbb{Z}}
\newcommand{\UD}{\mathbb{D}}
\newcommand{\clS}{\mathcal{S}}
\newcommand{\gtz}{\ge0}
\newcommand{\gt}{\ge}
\newcommand{\lt}{\le}
\newcommand{\fami}[1]{(#1_{s,t})}
\newcommand{\famc}[1]{(#1_t)}
\newcommand{\ts}{t\gt s\gtz}
\newcommand{\classCC}{\tilde{\mathcal C}}
\newcommand{\classS}{\mathcal S}

\newcommand{\step}[2]{\vskip-3mm\hbox{~}\begin{itemize}\item[{\it Step~#1:}]{~\it #2.}\end{itemize}}
\newcommand{\Case}[2]{\vskip-3mm\hbox{~}\begin{itemize}\item[{\it Case~#1:}]{~\it #2.}\end{itemize}}
\newcommand{\proofbox}{\hfill$\Box$}
\newcommand{\Claim}[2]{\vskip-3mm\hbox{~}\begin{itemize}\item[{\it Claim~#1:}]{~\it #2.}\end{itemize}}

\newcommand{\mcite}[1]{\csname b@#1\endcsname}
\newcommand{\UC}{\de\D}

\newcommand{\Moeb}{\mathrm{M\ddot ob}}

\newcommand{\dAlg}{{\mathcal A}(\UD)}
\newcommand{\diam}{\mathrm{diam}}

\renewcommand{\Hol}{{\sf Hol}}

\long\def\REM#1{\relax}

\def\mydot{\stackrel{\text{\Huge.}}}

\newcommand{\Maponto}
{\xrightarrow{\hbox{\lower.2ex\hbox{$\scriptstyle \smash{\mathsf{onto}}$}}\,}}
\newcommand{\Mapinto}
{\xrightarrow{\hbox{\lower.2ex\hbox{$\scriptstyle \smash{\mathsf{into}}$}}\,}}
\newcommand{\anglim}{\angle\lim}

\newenvironment{mylist}{\begin{list}{}%
{\labelwidth=2em\leftmargin=\labelwidth\itemsep=.4ex plus.1ex
minus.1ex\topsep=.7ex plus.3ex
minus.2ex}%
\let\itm=\item\def\item[##1]{\itm[{\rm ##1}]}}{\end{list}}



\title[Contact points and fractional singularities]{Contact points and fractional singularities for semigroups of holomorphic self-maps in the unit disc}
\author[F. Bracci]{Filippo Bracci$^\ast$}
\author[P. Gumenyuk]{Pavel Gumenyuk$^\S$}
\address{Dipartimento Di Matematica\\
Universit\`{a} di Roma \textquotedblleft Tor Vergata\textquotedblright\ \\
Via Della Ricerca Scientifica 1, 00133 \\
Roma, Italy} \email{fbracci@mat.uniroma2.it, gumenyuk@mat.uniroma2.it}

\thanks{$^{*}$Partially supported by the ERC grant ``HEVO - Holomorphic Evolution Equations'' n. 277691.}
\thanks{$^\S$Partially supported by the FIRB grant Futuro in Ricerca ``Geometria Differenziale Complessa e Dinamica Olomorfa'' n. RBFR08B2HY}

\date\today
\subjclass[2010]{Primary 30C35, 30D05, 30D40; Secondary 30C45, 30C55, 30C80, 37C10, 37C25, 37F75}

\begin{abstract} We study boundary singularities which can appear  for infinitesimal generators of one-parameter semigroups of holomorphic self-maps in the unit disc. We introduce ``regular'' fractional singularities and characterize them in terms of the behavior of the associated semigroups and K\oe nigs functions. We also provide necessary and sufficient geometric criteria on the shape of the image of the  K\oe nigs function for having such singularities. In order to do this, we study contact points of semigroups and prove that any contact (not fixed) point  of a one-parameter semigroup corresponds to a maximal  arc on the boundary to which the associated infinitesimal generator extends holomorphically as a vector field tangent to this arc.
\end{abstract}

\maketitle
\tableofcontents

\section{Introduction}

One-parameter semigroups of holomorphic self-maps arise in various areas of analysis, including geometric function theory, operator theory, iteration theory, theory of branching stochastic processes. The study of one-parameter semigroups can be also regarded as a special but very important case of the more general Loewner theory, which gained recently much attention, partially due to the successful development of its stochastic counterpart introduced in~2000 by Schramm~\cite{Schramm}. More about the theory of one-parameter semigroups, its applications and history of the topic can be found in the survey paper~\cite{Goryainov_survey}.

A (continuous) \textsl{one-parameter semigroup} $(\phi_t)$ of holomorphic maps in the unit disc ${\D\subset \C}$ is a continuous homomorphism between the semigroup of non-negative real numbers endowed with the Euclidean topology and the semigroup of all holomorphic self-maps of $\D$ endowed with the topology of uniform convergence on compacta. It is well known (see Section~\ref{SS_semig}) that to each such a one-parameter semigroup one can associate a unique holomorphic vector field $G:\D \to \C$ called the \textsl{infinitesimal generator} of~$(\phi_t)$ whose flow at a point $z\in \D$ is given by $[0,+\infty)\ni t\mapsto \phi_t(z)$. Moreover, there exists an (essentially unique when suitably normalized) univalent function $h:\D\to \C$, called the {\sl K\oe nigs function} of~$(\phi_t)$, which simultaneously linearizes the semigroup. Dynamical properties of~$(\phi_t)$ are related to analytical properties of $G$ and to geometrical properties of the simply connected domain~$h(\D)$.

Since one-parameter groups of automorphisms of  $\D$ are well understood, we assume from now on that $\phi_t$ is not an automorphism for all $t>0$. It is well known that there exists a unique point $\tau\in \oD$, called the {\sl Denjoy\,--\,Wolff point} of $(\phi_t)$, such that $\phi_t(z)\to \tau$ as $t\to +\infty$. The Denjoy\,--\,Wolff point is also a (common) fixed point of $(\phi_t)$, which in case $\tau\in \de \D$ should be understood in the sense of non-tangential limits (see Section~\ref{SS_boundaryreg}). If $\tau\in \D$, then clearly $G(\tau)=0$, and similarly $\angle\lim_{z\to \tau}G(z)=0$ if $\tau\in \de \D$. Moreover, in case $\tau\in \de \D$, also $G'$ has (non-positive) angular limit at~$\tau$, $\anglim_{z\to\tau} h(z)=\infty$ and the width of the minimal horizontal strip containing $h(\UD)$ (which can be infinite) is related to $G'(\tau):=\anglim_{z\to\tau} G'(z)$.  More generally, it is known that  $x\in \de \D$ is a boundary {\sl regular} fixed point for $(\phi_t)$ (\textit{i.e.}, $\phi_t(x)=x$ in the sense of non-tangential limits and $\phi_t$ possesses finite angular derivative at $x$  for all $t\geq 0$) if and only if $x$ is a \textsl{regular null point} of~$G$ (\textit{i.e.}, if $G$ has vanishing non-tangential limit at~$x$ and possesses finite (real) angular derivative at~$x$). In case $x\neq\tau$, this in turn is equivalent to the following condition: $\anglim_{z\to\tau}h(z)=\infty$ and  there is a strip (in case $\tau\in\de\UD$) or a (spiral) sector (in case $\tau\in\UD$) contained in~$h(\UD)$ and containing $h(rx)$  for all~$r\in (0,1)$, see \cite[Theorem~2.6]{AnalyticFlows} and \cite[Lemma~5]{ESZ}. The angle of the maximal sector or the height of the maximal strip having these properties and the angular derivatives $\phi'_t(x)$ and~$G'(x)$ are related to each other in one-to-one manner. It should be noticed that in general non-regular boundary fixed points, the so-called {\sl super-repulsive fixed points}, do not correspond to zeros of the infinitesimal generators, nor the converse is true (see Section~\ref{super S}).

In \cite{BCD-M_regular-poles}, the concept of {\sl regular poles} for an infinitesimal generator was introduced and related to the so-called {\sl $\beta$-points} of $(\phi_t)$ and $h$. The $\beta$-points by themselves are related to the Carleson\,--\,Makarov $\beta$-numbers \cite{CM} introduced in the study of the Brennan conjecture. In \cite{BCD-M_regular-poles} it was proved  that an isolated tip of a radial slit or a ``very thin'' cusp in $h(\D)$  corresponds to a regular pole of~$G$. In Proposition \ref{PR_Bertilsson} we give a characterization of regular poles in terms of the geometry of $h(\D)$ using a condition due to Bertilsson \cite{Bertilsson}.

It is then natural to ask which type of ``regular'' singularities an infinitesimal generator can have on the boundary, and how they are related to the associated semigroup and to the geometry of the corresponding simply connected domain~$h(\UD)$.

Although there are growth estimates restricting possible singularities, the non-tangential boundary behaviour of infinitesimal generators can be quite ``pathological'', see examples in Section~\ref{S_fractional}. In this paper we study rather ``good'' singularities defined as follows: a point $x\in \de \D$ is called a {\sl regular (fractional) singularity of order $\al\in \R\setminus\{0\}$} for $G$ provided $\lim_{r\to 1^-}\frac{G(rx)}{(1-r)^\alpha}$ exists finite and nonzero. An infinitesimal generator can have regular fractional singularities only of order $\al\in [-1,1]\setminus\{0\}$, where $\al=-1$ corresponds to a regular pole and $\al=1$ to a regular null point, except for the case of a regular singularity at the Denjoy\,--\,Wolff point, which was deeply studied in~\cite{Elin-Khavinson-Reich-Shoikhet}. In Section~\ref{S_fractional} we prove the following result:

\begin{theorem}\label{TH_fractional}
Fix $\alpha\in[-1,1)\setminus\{0\}$ and $x\in\UC$. Let $G$ be the infinitesimal generator of a one-parameter semigroup~$(\phi_t)$ and let $h$ be the associated K\oe nigs function. Then the following assertions are equivalent:
\begin{mylist}
\item[(i)] $G$ has a regular singularity of order~$\alpha$ at~$x$;

\item[(ii)] the angular limit
\begin{equation}\label{EQ_fractional-phi-prime}
\anglim_{z\to x}\phi'_{t}(z)(1-\overline x z)^\alpha
\end{equation}
exists and belongs to~$\Complex^*$ for all but at most one~$t>0$;

\item[(iii)] \begin{itemize}
               \item[a)] in case $\al<0$ the angular limit~\eqref{EQ_fractional-phi-prime} exists and belongs to~$\Complex^*$ for at least one~${t>0}$;
               \item[b)] in case $\al>0$ the angular limit~\eqref{EQ_fractional-phi-prime} exists and belongs to~$\Complex^*$ for at least two different positive values of\,~$t$;
             \end{itemize}

\item[(iv)] $\anglim_{z\to x} h'(z)(1-\overline{x}z)^\al$ exists and belongs to~$\Complex^*$;
\item[(v)] $\anglim_{z\to x} (h(z)-h(x))(1-\overline{x}z)^{\al-1}$ exists and belongs to~$\Complex^*$, where $h(x):=\anglim_{z\to x}h(z)$.
\end{mylist}
Moreover, if one (and hence all) of the above assertions hold, then $x$ is not a boundary fixed point of~$(\phi_t)$, and even not a contact point of~$(\phi_t)$ provided $\alpha\in(-1,0)$.
\end{theorem}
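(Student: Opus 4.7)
The plan is to derive everything from the K\oe nigs intertwining $h\circ\phi_t=\Psi_t\circ h$, where $\Psi_t$ is the affine self-map of $\C$ determined by the Denjoy--Wolff type of $(\phi_t)$. Differentiating in $t$ at $t=0$ and in $z$ yields, respectively,
\begin{equation}\label{eq:two}
G(z)\,h'(z)=\lambda\,h(z)+c,\qquad h'(\phi_t(z))\,\phi'_t(z)=a_t\,h'(z),
\end{equation}
where $\lambda,c,a_t$ depend on the Denjoy--Wolff classification. Together with the generator identity $G(\phi_t(z))=\phi'_t(z)\,G(z)$ these are the engine of the argument: the first relation in~\eqref{eq:two} moves between (i) and (iv), the second moves between (iv) and (ii)/(iii), and the generator identity runs implications backwards.

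The equivalence (iv)$\Leftrightarrow$(v) is essentially calculus: since $\alpha<1$ the power $(1-r)^{-\alpha}$ is integrable at $r=1$, so radial integration of $h'(sx)\sim C(1-s)^{-\alpha}$ produces both the angular limit $h(x)$ and the asymptotic in (v), with Lindel\"of's theorem upgrading from radial to non-tangential. The converse uses a Cauchy estimate on shrinking sub-discs together with the univalence of $h$. For (i)$\Leftrightarrow$(iv) I divide the first identity in~\eqref{eq:two} by $(1-\bar x z)^\alpha$ and take angular limits:
\[
\frac{G(z)}{(1-\bar x z)^\alpha}=\frac{\lambda\,h(z)+c}{h'(z)(1-\bar x z)^\alpha}\;\longrightarrow\;\frac{\lambda\,h(x)+c}{C}.
\]
The right-hand side lies in $\C^*$ as long as $h(x)$ is not a fixed value of $\Psi_t$; that degenerate case can occur only when $x$ is a boundary fixed point of $(\phi_t)$, and at such a point a fractional asymptotic with $\alpha\in[-1,1)\setminus\{0\}$ is impossible (regular fixed points force $\alpha=1$, while super-repelling ones do not admit fractional asymptotics of this type). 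The direction (i)$\Rightarrow$(iv) rearranges the same identity.

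For (iv)$\Rightarrow$(ii), the second identity of~\eqref{eq:two} reads $\phi'_t(z)=a_t\,h'(z)/h'(\phi_t(z))$, and for every $t$ with $\phi_t(x)\in\D$ the denominator has a finite nonzero angular limit, giving (ii) at once. When $x$ is a contact point, the orbit $\{\phi_t(x)\}$ lies on the maximal boundary arc from the earlier theorem of the paper, across which $G$ (and hence $h'$) extends holomorphically; the formula still makes sense after reinterpretation, and the ``at most one'' exception in (ii) corresponds to the instant the orbit leaves the arc. The converse (iii)$\Rightarrow$(iv) runs the same formula backwards using a $t$ with $\phi_t(x)\in\D$. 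The distinction between (iii.a) and (iii.b) comes from exactly this: for $\alpha<0$ the blow-up of $G(rx)$ together with $G(\phi_t(z))=\phi'_t(z)G(z)$ already excludes $\phi_t(x)$ from coinciding with a point where $G$ vanishes, and a single $t$ suffices; for $\alpha>0$ this degeneracy is not automatic and must be ruled out by invoking a second value of $t$.

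Finally, the moreover statement follows at once. A boundary fixed point $x\neq\tau$ would force $\Psi_t(h(x))=h(x)$ for all $t>0$, which pins $h(x)$ to a fixed value of the affine $\Psi_t$---either $\infty$ or, in the hyperbolic boundary DW case, $0$---and neither is compatible with the finite nonzero limit in (v). For $\alpha\in(-1,0)$, (i) additionally forces $|G(rx)|\to\infty$, contradicting the local boundedness of $G$ at any contact point guaranteed by the holomorphic extension of $G$ to a neighborhood of the associated maximal arc. The main obstacle throughout is the contact-point bookkeeping: both the exceptional $t$ in (ii) and the interpretation of~\eqref{eq:two} when $\phi_t(x)\in\partial\D$ rely essentially on the paper's earlier theorem on maximal arcs; away from this, the proof is algebraic manipulation of the three basic identities.
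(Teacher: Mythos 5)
Your overall architecture matches the paper's: the three identities you list are exactly the engine the authors use, and the contact-arc bookkeeping for the exceptional $t$ in (ii) is handled the same way. However, two steps have genuine gaps. First, (v)$\Rightarrow$(iv): a Cauchy (or Koebe-type) estimate on discs of radius comparable to $1-|z|$ only yields the upper bound $|h'(z)|=O\big(|1-\overline xz|^{-\alpha}\big)$; it does not produce the \emph{existence} of the limit $\anglim_{z\to x}h'(z)(1-\overline xz)^{\alpha}$. Passing from an asymptotic for $h-h(x)$ to one for $h'$ is a Tauberian-type statement even for univalent maps (compare the classical distinction between isogonality and the angular derivative), and the paper has to invoke the Godulya--Starkov theorem here. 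Your one-line justification does not close this implication.

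Second, and more seriously, (iii.b)$\Rightarrow$(i) for $\alpha>0$ is asserted but not proved: you correctly observe that one value of $t$ cannot suffice, but you never say how the second value is used. The paper's argument is the chain rule $\phi'_{t_1}(z)=\phi'_{t_1-t_0}(\phi_{t_0}(z))\,\phi'_{t_0}(z)$: multiplying by $(1-\overline xz)^{\alpha}$ and dividing the two hypotheses, the fractional factors cancel and one finds that $\phi'_{t_1-t_0}\circ\phi_{t_0}$ has a radial limit in $\Complex^*$ along $[0,x)$; Lindel\"of then gives $\anglim_{z\to x}\phi'_{t_1-t_0}(z)\in\Complex^*$, which by the Contreras--D\'{\i}az-Madrigal--Pommerenke characterization forces $x$ to be a boundary \emph{regular} fixed point, contradicting (iii.b) since then $\phi'_{t_0}(z)(1-\overline xz)^{\alpha}\to0$. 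Without this step the implication is open, and the paper's last example (the semigroup $\phi_t=H^{-1}(H^{e^{-t}})$) shows it is genuinely delicate. A smaller point: your explanation of why a single $t$ suffices when $\alpha<0$ reads circularly (you invoke ``the blow-up of $G(rx)$,'' which is the conclusion); the correct mechanism is that $\anglim\phi'_{t_0}(z)=0$ forces $\phi_{t_0}(x)\in\UD$, so that $G(\phi_{t_0}(z))$ has a nonzero finite limit in the generator identity. Likewise, in the ``moreover'' part, $G$ extends holomorphically only to a neighborhood of the open contact arc, of which $x$ is an endpoint; finiteness of $\anglim_{z\to x}G(z)$ at a contact point needs the boundary Lindel\"of lemma together with Proposition~\ref{PR_contact_arcs}(a1), not mere local boundedness.
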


The result is sharp, as shown by the examples at the end of Section~\ref{S_fractional}. The proof is based on a preliminary study of  contact points. A point $x\in \de \D$ is a {\sl contact point} for the semigroup $(\phi_t)$ if there exists $t_0>0$ such that the non-tangential limit $\phi_{t_0}(x)\in \de \D$. It turns out that existence of contact points is a quite  rigid condition. We say that an open arc $A\subset \de \D$ is a {\sl contact arc} for the infinitesimal generator $G$ provided $G$ extends holomorphically through $A$ and it is tangent to $A$. In Section~\ref{S contact} we prove the following statement:

\begin{theorem}\label{TH_contact-points}
Let $(\phi_t)$ be a one-parameter semigroup in $\D$  with the associated infinitesimal generator~$G$. Suppose that $x\in \de \D$ is not a boundary fixed point of~$(\phi_t)$. Then the following two assertions are equivalent:
\begin{mylist}
  \item[(i)] there exists $t_0>0$ such that $x$ is a contact point of~$\phi_{t_0}$;
  \item[(ii)] $G$ has a contact arc $A\subset\de \D$, with $x$ being its initial point.
\end{mylist}
If the above equivalent statements hold, then
\begin{mylist}
\item[(iii)] there exists $t_1(x,A)\in(0,+\infty]$, the {\sl life-time of $x$ in $A$}, such that $\phi_t(x)\in A$ for all $t\in(0,t_1(x,A))$ and the map $t\mapsto \phi_t(x)$ is a homeomorphism of $(0,t_1(x,A))$ onto $A$.
\end{mylist}
\end{theorem}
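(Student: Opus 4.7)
The plan is to close the cycle (ii)~$\Rightarrow$~(iii)~$\Rightarrow$~(i)~$\Rightarrow$~(ii), noting that (iii)~$\Rightarrow$~(i) is immediate by picking any $t_0\in(0,t_1(x,A))$.

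For (ii)~$\Rightarrow$~(iii), I assume $G$ extends holomorphically across $A$ and is tangent to $A$, with $x$ the initial point. Tangency means $G(\zeta)/(i\zeta)\in\R$ for $\zeta\in A$, so $G|_A$ is a nowhere vanishing real-analytic vector field on the one-dimensional real-analytic manifold $A$. Together with the initial-point hypothesis (which ensures the flow enters $A$ at $x$), this produces a unique real-analytic maximal integral curve $\gamma:[0,t_1)\to\overline{A}$ with $\gamma(0)=x$ that is a homeomorphism onto $A$ or onto an initial subarc. Because $G$ is holomorphic on $\D\cup A$, standard ODE uniqueness combined with continuous boundary extension of the interior flow forces $\gamma(t)=\anglim_{z\to x}\phi_t(z)$ for $t\in(0,t_1)$, which is exactly~(iii).

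For (i)~$\Rightarrow$~(ii), I assume $\phi_{t_0}^{\ast}(x):=\anglim_{z\to x}\phi_{t_0}(z)\in\partial\D$. First, I use the factorization $\phi_{t_0}=\phi_{t_0-s}\circ\phi_s$ together with Lindel\"of's theorem to show that $\phi_s^{\ast}(x)$ must exist and lie on $\partial\D$ for every $s\in(0,t_0]$: if $\phi_s^{\ast}(x)$ were interior, then $\phi_{t_0}^{\ast}(x)=\phi_{t_0-s}(\phi_s^{\ast}(x))\in\D$, contradicting (i). Hence $\gamma(s):=\phi_s^{\ast}(x)$ defines a curve in $\partial\D$ on $(0,t_0]$; it is continuous by joint continuity of the semigroup together with a boundary-value argument, and injective because the coincidence $\gamma(s_1)=\gamma(s_2)$ would, via standard semigroup arguments, make $\gamma(s_1)$ a common boundary fixed point of $(\phi_t)$, which is incompatible with $x$ not being a fixed point once one tracks the orbit back to~$x$.

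The main obstacle is the analytic extension of $G$ across the open arc $\gamma((0,t_0))$. To carry this out I would pass to the K\oe nigs coordinate $h$, which conjugates $(\phi_t)$ either to a translation $w\mapsto w+ct$ or to a (spiral) multiplication $w\mapsto e^{\lambda t}w$ on $\Omega:=h(\D)$. Since the conjugated flow translates, respectively rotates-and-scales, points of $h\circ\gamma$ along itself, $h\circ\gamma$ must lie on a straight line or on a logarithmic spiral, furnishing an honest real-analytic arc of $\partial\Omega$. Schwarz reflection then extends $h$ conformally across $\gamma((0,t_0))$, and since $G=c/h'$ or $G=\lambda h/h'$ in the respective cases, $G$ inherits the holomorphic extension; tangency of $G$ to the arc is automatic because the extended flow locally preserves $\partial\D$. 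Taking $A$ to be the maximal such contact arc starting at $x$ yields (ii), and (iii) then follows by applying the already-proved implication (ii)~$\Rightarrow$~(iii). The hardest point is exactly this extension step, which upgrades a purely metric observation (the trajectory stays on $\partial\D$) to an analytic conclusion about $G$, and essentially relies on the rigidity of the K\oe nigs linearization to ensure that the boundary trajectory lies on a real-analytic curve accessible to Schwarz reflection.
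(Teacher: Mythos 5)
Your overall route coincides with the paper's: (i)$\Rightarrow$(ii) by transporting the boundary orbit to the K\oe nigs coordinate and reflecting, and (ii)$\Rightarrow$(iii)$\Rightarrow$(i) by flowing $G$ along the arc. But the step you yourself flag as the hardest is exactly where your argument has a real gap. Knowing that the radial limits $h(\phi_s(x))=h(x)+s$ lie on a segment does \emph{not} by itself license Schwarz reflection: the reflection principle needs $\Im h$ (after normalizing the Denjoy\,--\,Wolff point to the boundary) to tend to its constant value \emph{unrestrictedly} as $z$ approaches the arc from inside, and a priori $h$ need not even be bounded near the arc. The paper supplies the missing idea: for $s\in(0,t_1)$ it forms the sector $S$ over the subarc $A_s=\{\phi_t(x):t\in(0,s)\}$, proves that $U=h(S)$ is a \emph{Jordan domain} whose boundary consists of the segment $\big(h(x),h(x)+s\big)$ and the two radial images (using the density of radial cluster values on the boundary of the image of a univalent map), and then invokes Carath\'eodory's extension theorem to obtain a homeomorphism of $\overline S$ onto $\overline U$ sending $A_s$ onto the segment; only then does reflection apply. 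Without some such argument, ``Schwarz reflection then extends $h$'' is an assertion, not a proof.

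Two smaller points. First, the continuity of $s\mapsto\phi_s(x)$ on the boundary is not a consequence of ``joint continuity of the semigroup'' --- the maps $\phi_t$ need not extend continuously to $\overline{\D}$; the paper imports this as a nontrivial cited result (Proposition~\ref{PR_Pavel}(ii)). Second, in (ii)$\Rightarrow$(iii) you cannot literally solve the ODE with initial condition $\gamma(0)=x$, because $G$ is only known to extend holomorphically to the open arc $A$ and $x$ is its endpoint; the paper instead flows from interior points $y\in A$, observes that $\phi_t$ extends homeomorphically to a subarc $A_1$ between $x$ and a fixed $y_0$, and then uses a boundary Lindel\"of lemma (Lemma~\ref{LM_Lind}, exploiting univalence of $\phi_t$) to identify $\lim_{A_1\ni y\to x}\phi_t(y)$ with the angular limit $\phi_t(x)$, which is then shown to lie in $A$ because $x$ is not a fixed point. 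These are precisely the places where the content of the statement lives, so they need to be filled in.
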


{\sl Maximal} contact arcs are then classified in Proposition~\ref{PR_contact_arcs}. In particular, every element of a one-parameter semigroup can have at most a countable set of non-fixed contact points at which it does not extends holomorphically (see Corollary~\ref{CR_non-embedd}).

In Section~\ref{S geometric} we use the Rodin\,--\,Warschawski theory \cite{RodinWarschawski} in order to give quite general sufficient and necessary geometric criteria on the shape of $h(\D)$ for $G$ to have regular fractional singularities of order $\alpha\in [-1,1)\setminus\{0\}$. The main sufficient criterion for $\alpha\in (-1,1)\setminus\{0\}$ is contained in Theorem \ref{TH_geometr-charact-alpha_neq_1} (see Theorem \ref{TH_geometr-charact-alpha=1} for the case $\alpha=-1$), while the necessary criterion is the content of Theorem \ref{TH_necessary-frac-singularity}. Very roughly speaking, those results say that a point $x\in \de \D$ is a regular fractional singularity of order $\al\in (-1,1)$ if close to $h(x)$ the domain $h(\D)$ looks like  an angle with vertex $h(x)$ and magnitude $(1-\alpha)\pi$.

The plan of the paper is the following. In Section~\ref{S prel} we recall some basic results we need in the paper and prove some other preliminary results. In Section~\ref{S contact} we study contact points of semigroups and contact arcs of infinitesimal generators. In Section~\ref{super S} we discuss super-repulsive fixed points giving an example of a one-parameter semigroup with a super-repulsive fixed point at which the associated infinitesimal generator fails to have radial limit (in \cite[p.\,127]{CDP2} there is an example of an infinitesimal generator having a non-regular null point which does not correspond to a fixed point of the associated one-parameter semigroup). However, we show in Proposition \ref{PR_superrepulsive} that if there is no backward orbits landing at a super-repulsive fixed point then this point is the initial point of a maximal contact arc and in particular it is a (non-regular) null point of the associated infinitesimal generator. In Section~\ref{S_fractional} we define fractional singularities for infinitesimal generators presenting several examples, which illustrate  various possibilities, and introduce regular fractional singularities. Finally, in Section~\ref{S geometric} we give our geometric criteria on the image of the K\oe nigs function in order to have regular fractional singularity and present a characterization of regular poles in terms of Bertilsson's $\beta$-point sufficient criterion.

\newpage\section{Preliminaries}\label{S prel}
\subsection{Contact, fixed and $\beta$- points}\label{SS_boundaryreg}
For the unproven statements, we refer the reader to, {\sl e.g.}, \cite{Abate}, \cite{CMbook} or \cite{Shb}.

Let $f:\D \to \D$ be holomorphic, $x\in \de \D$, and let
\[
\al_x(f):=\liminf_{z\to x} \frac{1-|f(z)|}{1-|z|}.
\]
From Julia's lemma it follows that $\al_x(f)>0$. The number $\al_x(f)$ is called the {\sl boundary dilatation coefficient} of $f$.

If $f:\D \to \C$ is a map and $x\in \de\D$, we write $\angle\lim_{z\to x}f(z)$ for the non-tangential (or angular) limit of $f$ at $x$. If this limit exists and it cannot cause any confusion, we will suppress the language and denote its value simply by~$f(x)$.

\begin{definition}
Let $f:\D \to \D$ be holomorphic. A point $x\in \de \D$ is said to be a \textsl{contact point} (respectively, \textsl{boundary fixed point}) of~$f$, if $f(x):=\angle \lim_{z\to x}f(z)$ exists and belongs to~$\UC$ (respectively, coincides with~$x$). If in addition
\begin{equation}\label{EQ_reg-contact-point}
\al_f(x)<+\infty,
\end{equation}
then $x$ is called a \textsl{regular contact point} (respectively, \textsl{boundary regular fixed point}) of $f$.

A boundary fixed point which is not regular is said to be \textsl{super-repulsive}.
\end{definition}
\begin{remark}
By the Julia\,--\,Wolff\,--\,Carath\'eodory theorem, condition~\eqref{EQ_reg-contact-point} in the above definition is sufficient on its own for~$x\in\UC$ to be a regular contact point of~$f$.
\end{remark}

If $f:\D\to\D$ is holomorphic, neither the identity nor an elliptic automorphism, by the Denjoy\,--\,Wolff theorem, there exists a unique point $\tau\in\oD$, called the {\sl Denjoy\,--\,Wolff point} of $f$ (or abbreviated, the \textsl{DW-point}), such that $f(\tau)=\tau$ and the sequence of iterates $\{f^{\circ k}\}$ converges uniformly on compacta of $\D$ to the constant map $z\mapsto \tau$.

\begin{definition}
Let $f:\D \to \C$ be holomorphic. A point $x\in \de \D$ is said to be a {\sl $\beta$-point} of~$f$ if
\[
\angle \limsup_{z\to x} \frac{|f'(z)|}{|x-z|}=L<+\infty.
\]
\end{definition}

If $x\in \de \D$ is a $\beta$-point of~$f$, then $f$ has non-tangential limit $f(x)\in \Complex$ and $f'$ has non-tangential limit $f'(x)=0$ at~$x$ (see, {\sl e.g.} \cite[Section~3.2]{BCD-M_regular-poles}). If in addition $f(\UD)\subset\UD$, then $f(x)\in\UD$.

\subsection{One-parameter semigroups and infinitesimal
generators}\label{SS_semig}

A one-parameter semigroup $(\phi_t)$ of holomorphic self-maps of~$\UD$ is a continuous homomorphism $t\mapsto \phi_t$ from the
additive semigroup $(\R_{\ge0}, +)$ of non-negative real numbers to the
semigroup $({\sf Hol}(\D,\D),\circ)$ of all holomorphic self-maps
of $\D$ with respect to the composition, endowed with the
topology of uniform convergence on compacta.

By Berkson and Porta's theorem \cite[Theorem~(1.1)]{Berkson-Porta}, if $(\phi_t)$
is a one-parameter semigroup in~$\Hol(\UD,\UD)$, then $t\mapsto \phi_t(z)$
is real-analytic and there exists a unique holomorphic vector field
$G:\D\to \C$ such that
\[
\frac{\de \phi_t(z)}{\de
t}=G(\phi_t(z))\qquad\text{for all $z\in\UD$ and all~$t\ge0$}.
\]
This vector field $G$, called the  {\sl infinitesimal generator} of $(\phi_t)$,  is {\sl semicomplete} in the sense that the Cauchy problem
\[
\begin{cases}
\mydot x(t)=G(x(t)),\\
x(0)=z,
\end{cases}
\]
has a  solution $x^z:[0,+\infty)\to \D$ for every $z\in \D$.
Conversely, any semicomplete holomorphic vector field in $\D$
generates a one-parameter semigroup in ${\sf Hol}(\D,\D)$.

We denote by $\Gen$ the set of all infinitesimal generators in $\D$.

\begin{remark}
It is known that if $(\phi_t)$ is a one-parameter semigroup  and ${\phi_{t_0}\in\Aut}$ for some~$t_0>0$, then $(\phi_t)\subset\Aut$ and it can be extended to a one-parameter group in~$\Aut$, which is a very well studied and understood object.
\end{remark}
Therefore, in what follows we adopt the following assumption.\\[2mm]
\textbf{Assumption.} For all one-parameter semigroups~$(\phi_t)$  in~$\Hol(\UD,\UD)$ considered throughout this paper, we suppose that $\phi_t\not\in\Aut$ for all~$t>0$.\vskip2mm

Let $G$ be an infinitesimal generator with
the associated one-parameter semigroup~$(\phi_t)$. Then there exists a unique
$\tau\in\oD$ and a unique holomorphic function $p:\D\to \C$ with $\Re
p(z)\geq 0$ such that the following identity, known as the {\sl Berkson\,--\,Porta
formula}, takes place
\begin{equation}\label{EQ_B-P-representation}
G(z)=(z-\tau)(\overline{\tau}z-1)p(z)\quad\text{for all $z\in\UD$}.
\end{equation}
The point $\tau$ in the Berkson\,--\,Porta formula turns out to be
the  Denjoy\,--\,Wolff point of $\phi_t$ for all $t>0$.
Moreover, if $\tau\in \de\D$, then  $\angle\lim_{z\to
\tau}\phi_t'(z)=e^{\beta t}$ for some $\beta\leq 0$.

\begin{definition}
A {\sl boundary regular fixed point}  for a one-parameter semigroup
$(\phi_t)$ is a point $x\in \de \D$ which is a boundary regular fixed point of~$\phi_t$ for all~$t>0$.
\end{definition}
\begin{definition}
A \textsl{boundary regular null point} for an infinitesimal generator $G$, is a point $x\in \de \D$ such that
\[
\angle\lim_{z\to x}\frac{G(z)}{z-x}=\ell\in \R,
\]
exists finitely. The number $\ell$ is called the {\sl dilation} of $G$ at $x$.
\end{definition}

It is well known (see, \cite[Theorem 1]{CDP}, \cite[Theorem 2]{CDP2}, \cite[pag. 255]{Siskakis-tesis}, \cite{ES}) that given a one-parameter semigroup $(\phi_t)$ with associated infinitesimal generator $G$, a point $x\in \de \D$ is a boundary (regular) fixed point of $\phi_{t_0}$ for some $t_0>0$ if and only if it is a boundary (regular) fixed point of $\phi_t$ for all $t\geq 0$. Moreover, $x$ is a boundary regular fixed point for $(\phi_{t})$ if and only if $x$ is a boundary regular null point for $G$. Boundary regular fixed points can be characterized using the Poisson kernel (also in higher dimensions, see \cite{BCD-M_Pluripotential}) or via Berkson\,--\,Porta type formulas \cite{BCD3, Goryainov-Kudryavtseva, Shoikhet2}.

Now we recall what a regular pole is and its characterization in terms of $\beta$-points.

\begin{definition}
Let $G\in \Gen$.  A point $x\in \de \D$ is a {\sl  regular pole of $G$ of  mass $C>0$} if
\[
\angle\liminf_{z\to x} |G(z)(x-z)|= C.
\]
\end{definition}

As shown in \cite{BCD-M_regular-poles}, if $x$ is a regular pole for $G\in \Gen$ then $\angle\lim_{z\to x} G(z)(x-z)$ exists finite and nonzero. In \cite{BCD-M_regular-poles} the relations between regular poles and $\beta$-points were studied and it was proved that given a one-parameter semigroup $(\phi_t)$ with associated infinitesimal generator $G$, a point $x\in \de \D$ is a regular pole of $G$ if and only if $x$ is a $\beta$-point for $\phi_t$ for some~--- hence for all~--- $t>0$.

To any one-parameter semigroup in~$\Hol(\UD,\UD)$ one can associate a (unique) intertwining map which simultaneously linearizes all the maps in the semigroup. The ideas for the proof of the following result are in \cite{H}, and, with different methods in \cite{Berkson-Porta} and \cite{Siskakis-tesis} (see also \cite[Chapter 1.4]{Abate}).

\begin{proposition}
\label{PR_Koenigs-function} Let $(\phi_{t})$ be a non-trivial
semigroup in $\mathbb{D}$ with infinitesimal generator~$G$.
Then there exists a unique univalent function $h:\D\to\C$,
called the {\sl K\oe nigs function} of~$(\phi_t)$, such that
\begin{mylist}
\item[(A)] If $(\phi_{t})$ has the Denjoy\,--\,Wolff point $\tau \in \mathbb{D}$ then
$h(\tau)=0$, $h'(\tau)=1$ and
\\${h(\phi_t(z))=\exp(G'(\tau)t)h(z)}$ for all $t\geq 0$ and $z\in\UD$.
Moreover, $h$ is the unique holomorphic function from
$\mathbb{D}$ into $\mathbb{C}$ such that
\begin{enumerate}
\item[(i)] $h^{\prime }(z)\neq 0$ for every $z\in \mathbb{D},$
\item[(ii)] $h(\tau )=0$ and $h^{\prime }(\tau )=1,$
\item[(iii)] $h^{\prime }(z)G(z)=G^{\prime }(\tau )h(z),$ for
every $z\in \mathbb{D}.$
\end{enumerate}
\item[(B)] If $(\phi_{t})$ has the Denjoy\,--\,Wolff point $\tau
\in \partial \mathbb{D}$ then $h(0)=0$ and
$h(\phi_t(z))=h(z)+t$ for all $t\geq 0$  and $z\in\UD$. Moreover, $h$ is
the unique holomorphic function from $\mathbb{D}$ into~$\mathbb{C}$ such  that:
\begin{enumerate}
\item[(i)] $h(0)=0,$
\item[(ii)] $h^{\prime }(z)G(z)=1$ for every $z\in \mathbb{D}.$
\end{enumerate}
\end{mylist}
\end{proposition}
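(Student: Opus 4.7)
The plan is to treat Cases~(A) and~(B) separately, in each case handling uniqueness first (which is immediate from the defining equations) and then constructing~$h$ by combining a local ODE solution with the dynamics of the semigroup.

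Uniqueness is short in both cases. In Case~(A), if $h_1,h_2$ both satisfy (i)--(iii), then $h_j'/h_j=G'(\tau)/G$ on~$\D\setminus\{\tau\}$, so $h_1/h_2$ is constant away from $\tau$; the normalizations $h_j(\tau)=0$, $h_j'(\tau)=1$ together with l'H\^opital force this constant to be~$1$. In Case~(B) the conditions $h_j'=1/G$ and $h_j(0)=0$ leave no freedom.

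For Case~(A) existence, I would first expand $G$ near $\tau$ using the Berkson--Porta formula. Setting $\lambda:=G'(\tau)$, the standing assumption that no $\phi_t$ is an automorphism gives $\Re\lambda<0$ and $G(z)=\lambda(z-\tau)+O((z-\tau)^2)$, so $\lambda/G$ has a simple pole with residue~$1$ at $\tau$. Integrating $(\log h)'=\lambda/G$ produces a unique local holomorphic solution of the form $h(z)=(z-\tau)e^{\psi(z)}$ near~$\tau$ with $\psi(\tau)=0$, giving (i)--(iii) locally. I then extend $h$ to all of $\D$ by setting
\[
h(z):=e^{-\lambda t}h(\phi_t(z)),
\]
with $t=t(z)$ so large that $\phi_t(z)$ lies in the neighborhood of $\tau$ where $h$ is already defined (available since $\phi_t\to\tau$ uniformly on compacta). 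Independence of~$t$ follows by differentiating $s\mapsto e^{-\lambda s}h(\phi_s(w))$ and using $h'G=\lambda h$ in the neighborhood; the same computation yields the functional equation $h\circ\phi_t=e^{\lambda t}h$ globally. For univalence: $h'(\tau)=1$ makes $h$ injective on some neighborhood $V$ of $\tau$; if $h(z_1)=h(z_2)$ then $h(\phi_t(z_1))=h(\phi_t(z_2))$ for all~$t$, and choosing $t$ so large that both $\phi_t(z_j)\in V$ forces $\phi_t(z_1)=\phi_t(z_2)$, whence $z_1=z_2$ since each $\phi_t$ is the time-$t$ flow of a holomorphic vector field and is therefore injective.

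For Case~(B), since $\tau\in\de\D$ and the orbits $t\mapsto\phi_t(z)$ converge to the unique DW-point, $G$ can have no zero in $\D$ (a zero would be a further common fixed point); hence $1/G$ is holomorphic and the simple-connectedness of~$\D$ makes
\[
h(z):=\int_0^z\frac{dw}{G(w)}
\]
a well-defined holomorphic function satisfying $h(0)=0$ and $h'G=1$. The functional equation $h\circ\phi_t=h+t$ follows by differentiating $s\mapsto h(\phi_s(z))-s$: the derivative is $h'(\phi_s(z))G(\phi_s(z))-1\equiv0$. The main obstacle is univalence, since $h'\neq0$ only gives local injectivity. My plan is to exploit the invariance $h(\D)+[0,+\infty)\subseteq h(\D)$ together with the fact that $h$ rectifies the flow of~$G$ to the constant vector field~$1$: if $h(z_1)=h(z_2)$ with $z_1\neq z_2$, the two trajectories $t\mapsto\phi_t(z_j)$ share their $h$-image and both converge to~$\tau$; applying the flow-box theorem inside a horocyclic neighborhood of~$\tau$ (which is $(\phi_t)$-invariant by Wolff's lemma, and on which $h$ is injective because it rectifies~$G$) would force $\phi_t(z_1)=\phi_t(z_2)$ for large~$t$, whence $z_1=z_2$ by injectivity of~$\phi_t$, exactly as in Case~(A).
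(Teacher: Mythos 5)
The paper itself does not prove this proposition (it defers to Heins, Berkson--Porta and Siskakis), so your attempt has to be judged on its own. Your uniqueness arguments and all of Case~(A) are essentially correct: the sign $\Re G'(\tau)<0$ is rightly extracted from the Berkson--Porta formula together with the exclusion of elliptic automorphism groups, the local solution near $\tau$ and the extension $h(z)=e^{-\lambda t}h(\phi_t(z))$ are the standard construction, and reducing global univalence to injectivity of the individual maps $\phi_t$ via the functional equation is legitimate. The construction of $h$ in Case~(B) and the derivation of the Abel equation $h\circ\phi_t=h+t$ are also fine.

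The genuine gap is the univalence of $h$ in Case~(B). Two distinct things fail in your horodisk plan. First, the flow-box theorem is purely local: it yields injectivity of $h$ near each point, which you already know from $h'\neq0$; it gives no injectivity of $h$ on a horocyclic neighbourhood of $\tau$, and establishing injectivity on a forward-invariant region that eventually contains both orbits is essentially the statement to be proved, so the step ``on which $h$ is injective because it rectifies $G$'' is circular. Second, orbits need not enter small horodisks at all: for the parabolic semigroup whose K\oe nigs domain is the upper half-plane, i.e.\ $h(z)=i(1+z)/(1-z)$ and $\tau=1$, one computes $|1-\phi_t(0)|^2/\bigl(1-|\phi_t(0)|^2\bigr)=1$ for every $t\geq 0$, so the orbit of $0$ remains on the boundary of the horodisk $E(1,1)$ forever; only large horodisks are forward-invariant sets containing a given pair of points, and for those the injectivity of $h$ is the full claim. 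The standard, non-dynamical repair: take $\tau=1$, so Berkson--Porta gives $G(z)=(1-z)^2p(z)$ with $\Re p\geq0$, and transport $h$ to the right half-plane by the Cayley map $C(z)=(1+z)/(1-z)$; then $(h\circ C^{-1})'(w)=1/\bigl(2p(C^{-1}(w))\bigr)$ has non-negative real part, hence is either an imaginary constant (making $h\circ C^{-1}$ affine) or has strictly positive real part, and the Noshiro--Warschawski theorem on the convex half-plane gives univalence of $h\circ C^{-1}$, hence of $h$. Substituting this for your horodisk paragraph closes the gap.
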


The following result is a form of the Lindel\"of theorem we will need later.
\begin{theorem}[Lindel\"of]\label{TH_Lind}
Let $f:\UD\to\Complex$ be a holomorphic function and let $\sigma\in\UC$. Suppose that one of the following assertions hold:
\begin{mylist}
\item[(i)] $\Complex\setminus f(\UD)$ contains at least two different points,
\item[(ii)] $f$ is univalent in~$\UD$,
\item[(iii)] $f$ is the derivative of a univalent function in~$\UD$, or
\item[(iv)] $f\in\Gen$.
\end{mylist}
If there exists a continuous map $\gamma:[0,1)\to\UD$ with $\lim_{[0,1)\ni s\to 1}\gamma(s)=\sigma$  and such that the limit $a:=\lim_{s\to1}f(\gamma(s))$ exists, finite or infinite, then the angular limit~$\anglim_{z\to\sigma}f(z)$ also exists and equals~$a$.
\end{theorem}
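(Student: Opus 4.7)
The plan is to reduce all four cases to case~(i), which is the classical form of Lindel\"of's theorem, and then to establish case~(i) by a normal families argument. For case~(i), I would fix a sequence $s_n\nearrow 1$, set $z_n:=\gamma(s_n)$, and choose hyperbolic automorphisms $T_n\in\Aut$ with $T_n(0)=z_n$ sharing a common invariant geodesic ending at~$\sigma$. The family $\{f\circ T_n\}$ omits three values on the Riemann sphere (the two finite omitted values of~$f$ together with~$\infty$), so Montel's theorem gives normality on~$\UD$. The hypothesis $f(z_n)=(f\circ T_n)(0)\to a$ forces every subsequential limit to take the value~$a$ at~$0$, and applying the same extraction to the hyperbolic translates of $f\circ T_n$ along the common invariant geodesic shows that this limit is the constant~$a$. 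Since every Stolz angle at~$\sigma$ pulls back via~$T_n^{-1}$ to a set with relatively compact closure in~$\UD$, locally uniform convergence $f\circ T_n\to a$ transfers to~$\anglim_{z\to\sigma}f(z)=a$.

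Case~(ii) reduces to~(i) because the image of a univalent $f$ is a simply connected proper subdomain of~$\Complex$, and the complement in~$\Complex$ must contain at least two points (since $\Complex\setminus\{w_0\}$ fails to be simply connected). Case~(iv) reduces to~(i) via the Berkson\,--\,Porta factorization $G(z)=(z-\tau)(\overline{\tau}z-1)p(z)$ with $\Re p\ge 0$ from~\eqref{EQ_B-P-representation}: the Herglotz factor~$p$ omits the entire open left half-plane, and the polynomial prefactor is continuous on~$\oD$ and nonzero near any $\sigma\in\UC\setminus\{\tau\}$ (the case $\sigma=\tau$ being trivial, as $G$ already extends continuously with value~$0$ there). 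Consequently, convergence of~$G$ along~$\gamma$ forces convergence of~$p$ along~$\gamma$, case~(i) applied to~$p$ yields the angular limit, and multiplying by the prefactor transports this back to~$G$.

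Case~(iii) is the main obstacle: since $h'$ need only omit the single value~$0$, it does not obviously satisfy the hypothesis of case~(i). My plan is to exploit case~(ii) applied to~$h$ itself through a Koebe-type compactness argument. Using the automorphisms~$T_n$ from case~(i), I would consider the rescaled maps
\begin{equation*}
H_n(w):=\frac{h(T_n(w))-h(z_n)}{(1-|z_n|^2)\,h'(z_n)},
\end{equation*}
which are normalized so that $H_n(0)=0$ and $H_n'(0)=1$, hence form a normal family in the Koebe class~$\mathcal S$ with univalent limits. When $a\ne 0$ the hypothesis $h'(T_n(0))\to a$ identifies the normalizing factor in the limit, and differentiating any subsequential limit equation propagates the convergence of~$h'$ from the curve~$\gamma$ to every Stolz angle at~$\sigma$. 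The delicate subcase is $a=0$, where the normalization degenerates; I~would handle it separately, either by a reductio using the Koebe distortion estimates to rule out oscillation of~$h'$ in a Stolz angle, or by applying case~(i) to an auxiliary bounded holomorphic function constructed from~$h$ and a Riemann map of~$h(\UD)$.
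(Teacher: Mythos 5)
Your core step in case~(i) does not go through for tangential curves, and handling those is precisely the nontrivial content of Lindel\"of's theorem. The rescaling $f\circ T_n$ with $T_n(0)=\gamma(s_n)$ does show (after the identity-theorem patch you hint at, using that the rescaled curve tails contain nondegenerate continua through the origin) that $f\to a$ at all points lying within a \emph{bounded hyperbolic distance} of the curve~$\gamma$. But your final assertion --- that a Stolz angle at~$\sigma$ pulls back under $T_n^{-1}$ to a relatively compact subset of~$\UD$ --- is false: a Stolz angle has infinite hyperbolic diameter, and, more to the point, if $\gamma$ approaches $\sigma$ tangentially (say along a horocycle) then the hyperbolic distance from $\gamma(s_n)$ to the radius $[0,\sigma)$ tends to $+\infty$, so no fixed hyperbolic neighbourhood of the curve ever reaches the Stolz angle. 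Passing from an asymptotic value along an \emph{arbitrary} curve to an angular limit requires an extra tool: the Lehto--Virtanen maximum principle for normal functions, or, for bounded~$f$, Lindel\"of's classical reflection/two-constants argument. Two further gaps: in case~(iii) the subcase $a=0$ (and in fact the ``propagation to the Stolz angle'' step for every~$a$, which inherits the same tangentiality problem) is only a plan, not a proof; and in case~(iv) the subcase $\sigma=\tau\in\UC$ is dismissed on the grounds that $G$ ``extends continuously with value~$0$'' at the Denjoy--Wolff point, which is not true in general (only the \emph{angular} limit of~$G$ at~$\tau$ is known to vanish), and your factorization argument degenerates there because the polynomial prefactor tends to~$0$ while $p$ may tend to~$\infty$ along~$\gamma$.

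For comparison, the paper does not prove the theorem from scratch: it observes that the Lindel\"of property holds for every \emph{normal} meromorphic function (the Lehto--Virtanen theorem, quoted from \cite{Pommerenke}), and then only verifies normality in each of the four cases --- (i)--(iii) by citation, and (iv) via the Bloch-function estimate in the proof of Lemma~\ref{LM_ang_lim_at_frac_sing}, which shows that $\log p$ is a Bloch function for any Herglotz function~$p$. Your rescaling constructions are in effect normality proofs, so the cleanest repair is to keep them only for that purpose (this also disposes of case~(iii) uniformly, since $\log h'$ is Bloch by the Koebe distortion theorem, with no case distinction on~$a$) and then invoke, or prove once, the Lehto--Virtanen theorem, rather than re-deriving the curve-to-Stolz-angle step separately in each case.
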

More generally, the Lindel\"of theorem holds for all normal meromorphic functions, see, \textit{e.g.}, \cite[\S9.1, Theorem\,9.3]{Pommerenke}. The fact that functions~$f$ satisfying (i), (ii), or~(iii) are normal follows from~\cite[Lemmas~9.2 and~9.3  in \S9.1]{Pommerenke}, and to show that all $f\in\Gen$ are normal one can use the argument in the proof of Lemma~\ref{LM_ang_lim_at_frac_sing}, which we give later.

In the sequel we need also this kind of ``boundary Lindel\"of theorem'':
\begin{lemma}\label{LM_Lind}
Suppose that $f\in\Hol(\UD,\Complex)$ extends holomorphically to an open arc~$A\subset\partial\UD$.
Let $\sigma$ be one of the end-points of~$A$.
Then the following statements hold.
\begin{mylist}
\item[(A)] If $f$ satisfies one of conditions (i)\,--\,(iv) in Theorem~\ref{TH_Lind} and the limit $\lim_{A\ni x\to\sigma} f(x)$ exists, finite or infinite, then the angular limit $\anglim_{z\to\sigma}f(z)$ also exists and both limits coincide.

\item[(B)] If $f\in \Gen$ and $\Re\{ \overline{x} f(x)\}=0$ for all $x\in A$, then the limit $\lim_{A\ni x\to\sigma} f(x)$ exists, finite or infinite.

\item[(C)] If $f$ is univalent in~$\UD$ and the set $f(A)$ is contained on a Jordan curve $\Gamma\subset\ComplexE$, the limit $\anglim_{z\to\sigma}f(z)$ exists, finite or infinite.

\end{mylist}
\end{lemma}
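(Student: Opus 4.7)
I will prove the three assertions in the order (A), (C), (B).

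\textbf{Part (A).} The strategy is to reduce to Theorem~\ref{TH_Lind} by perturbing a boundary approach to $\sigma$ slightly into the disc. Given a sequence $x_n\in A$ with $x_n\to\sigma$, set $z_n:=(1-1/n)\,x_n\in\UD$; then $z_n\to\sigma$, and the continuity of $f$ on $\UD\cup A$ (from the assumed holomorphic extension through~$A$) gives $|f(z_n)-f(x_n)|\to 0$. Interpolating the $z_n$ by a continuous curve $\gamma:[0,1)\to\UD$ with $\gamma(1-1/n)=z_n$, one obtains $\lim_{s\to 1^-} f\bigl(\gamma(s)\bigr)=a$, whereupon Theorem~\ref{TH_Lind} yields the angular limit.

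\textbf{Part (C).} Extend $f$ holomorphically to a connected open neighborhood $U$ of $A$. The univalence of $f$ on~$\UD$, combined with the connectedness of $f(A)$ in the Jordan curve~$\Gamma$ and local injectivity of $f|_A$ away from at most isolated critical points of $f$ on~$A$, shows that $f|_A$ is eventually injective near~$\sigma$ and that $f(x)$ traces the sub-arc $f(A)$ monotonically along~$\Gamma$. Consequently $f(x)$ converges to a well-defined endpoint $a\in\ComplexE$ of this sub-arc, and Part~(A) gives the angular limit.

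\textbf{Part (B).} This is the main obstacle. Parametrize $A=\{e^{i\theta}:\theta\in(\theta_1,\theta_0)\}$ with $\sigma=e^{i\theta_0}$. The tangency condition $\Re(\bar x f(x))=0$ on~$A$ forces $f(e^{i\theta})=ie^{i\theta}u(\theta)$ with $u:(\theta_1,\theta_0)\to\R$, and the holomorphic extension of~$f$ through~$A$ makes~$u$ real-analytic. If $u$ vanishes identically on some subinterval adjacent to~$\theta_0$, then the limit is~$0$. Otherwise the zeros of~$u$ are isolated, and on the connected component $A'\subset A$ of the zero-free set adjacent to~$\sigma$, $u$ has constant sign; the semigroup $(\phi_t)$ then restricts on~$A'$ to the monotone real-analytic flow $\dot\theta=u(\theta)$, and each orbit $t\mapsto\phi_t(x_0)$ sweeps out~$A'$ monotonically toward~$\sigma$ as $t$ ranges in a maximal interval $(0,t_1)\subseteq(0,+\infty]$.

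To establish $\lim_{\theta\to\theta_0^-} u(\theta)\in\overline{\R}$, I invoke the K\oe nigs function~$h$ of Proposition~\ref{PR_Koenigs-function}. In the boundary-DW case $\tau\in\UC$, the linearization $h\circ\phi_t=h+t$ places $h(A')$ on the horizontal line $\{\Im w=\Im h(x_0)\}$, which is a Jordan curve in~$\ComplexE$. Part~(C) applied to~$h$ yields $\anglim_{z\to\sigma} h(z)\in\ComplexE$. Combining this with the identity $h'\cdot f=1$ on~$\UD$, which extends through~$A'$, and the constant-sign real-analytic structure of~$u$ on~$A'$, a direct computation produces the limit of~$u$ at~$\sigma$. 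The interior-DW case $\tau\in\UD$ is analogous, with the horizontal line replaced by the logarithmic spiral $\{e^{\lambda t}h(x_0):t\in\R\}$ arising from $h\circ\phi_t=e^{\lambda t}h$, which is also a Jordan curve in~$\ComplexE$ since $\Re\lambda<0$.

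The hardest step is this final transfer: translating the convergence of~$h$ along~$A'$ (from Part~(C)) into convergence of the tangential factor~$u$, which requires carefully exploiting both the monotonicity of the orbit in the parameter~$t$ and the constant-sign real-analytic structure of~$u$ near~$\theta_0$ to rule out oscillation of~$u$.
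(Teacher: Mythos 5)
Your part (A) is essentially the paper's route (the paper dismisses it with ``follows easily from Lindel\"of''), though your specific choice $z_n=(1-1/n)x_n$ does not guarantee $|f(z_n)-f(x_n)|\to0$: the modulus of continuity of $f$ on $\UD\cup A$ may degenerate as $x_n$ approaches the endpoint $\sigma\notin A$, so the radii must be chosen depending on $x_n$, and you must also control $f$ along the whole interpolating curve, not just at the sample points. These are standard repairs. The real problems are in (C) and (B).

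In (C) there is a genuine gap. ``Local injectivity of $f|_A$ away from isolated critical points'' does not imply that $f|_A$ is eventually injective near $\sigma$, for two reasons: the zeros of $f'$ on $A$, while isolated in $A$, may accumulate at the endpoint $\sigma$; and even a continuous, everywhere locally injective map of an interval into a Jordan curve $\Gamma$ can fail to have a limit at the endpoint --- it can wind around $\Gamma$ infinitely many times while remaining ``monotone'' in the lifted parameter. Some genuinely global consequence of univalence is needed. The paper obtains it by looking at $f$ on a sector $S(\sigma_1,\sigma_2)$ with vertex $0$: the boundary of $f(S(\sigma_1,\sigma_2))$ is a Jordan arc together with a connected subset of $\Gamma$, so by \cite[Prop.\,2.5]{Pommerenke2} every boundary value has at most two preimages on the closure; hence $f|_A$ takes each value at most twice, which combined with continuity into $\Gamma$ forces the existence of the limits at the endpoints. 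Nothing in your sketch substitutes for this step.

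Part (B) is where your argument genuinely fails, and you say so yourself. Two distinct issues. First, passing to ``the connected component $A'$ of the zero-free set adjacent to $\sigma$'' presupposes that the zeros of $u$ (equivalently, of $G|_A$) do not accumulate at $\sigma$; this is not justified, and the statement of (B) does not exclude it. Second, and more fundamentally, the endgame cannot work as described even on such an $A'$: the Abel equation places $h(A')$ on a horizontal line, so $h(e^{i\theta})=c+s(\theta)$ with $s$ real, strictly monotone, and $h'G=1$ gives $u=1/s'$. Part (C) applied to $h$ only tells you that the \emph{monotone} function $s$ has a limit in $\overline{\R}$, which says nothing about the existence of a limit of $s'$: a monotone convergent function can have a derivative oscillating between $0$ and arbitrarily large values. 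So monotonicity of the orbit is simply not enough information to ``rule out oscillation of $u$.'' The paper's mechanism is entirely different and supplies exactly the missing positivity: the tangency condition forces the Herglotz factor $p$ in the Berkson--Porta formula to satisfy $\Re p=0$ on $A$, so $p$ extends by Schwarz reflection to $\ComplexE\setminus(\partial\UD\setminus A)$ and $g:=ip\circ\Phi$ is \emph{typically real}; the representation $g=g(0)+zp_1(z)/(1-z^2)$ with $\Re p_1\ge0$, together with the radial-limit result $\lim_{r\to1^-}(1-r)q(r)\in[0,+\infty)$ valid for any Herglotz function $q$ (applied to $p_1$ and to $1/p_1$), then yields the limit of $g$ along $(-1,1)$ at $\pm1$, finite or infinite. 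Without importing this (or an equivalent) positivity structure, your plan for (B) cannot be completed.
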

\begin{proof}
First of all, assertion (A) follows easily from Lindel\"of Theorem~\ref{TH_Lind}. To prove (B) write Berkson\,--\,Porta formula~\eqref{EQ_B-P-representation}: $f(z)=(\tau-z)(1-\overline\tau z)p(z)$, where $\tau\in\overline\UD$ and $p$ is holomorphic in~$\UD$, with $\Re p\ge0$.  We may assume that $\tau\not\in A$. Otherwise, instead of~$A$, we would consider the subarc of $A$ between $\sigma$ and~$\tau$. Then, taking into account that $\Re\{\overline z f(z)\}=0$ for all $z\in A$, we see that $p$ extends holomorphically to $D:=\ComplexE\setminus(\partial\UD\setminus A)$, with $p(1/\overline z)=-\overline{p(z)}$ for all $z\in\UD$ and $\Re p|_A=0$. Clearly, we may assume that $p$ is not constant. Now, if $\Phi$ is the appropriately chosen conformal map of $\UD$ onto $D$ that takes $(-1,1)$ to $A$, then $g:=ip\,\circ \Phi$ is a typically real function and hence, see, \textit{e.g.}, \cite[p.\,55--56]{Duren}, $g(z)=g(0)+zp_1(z)/(1-z^2)$ for all $z\in\UD$, where $p_1$ is a holomorphic function in $\UD$ with $\Re p_1\ge0$. Bearing in mind that $1/p_1$ is also a holomorphic function with non-negative real part unless $p_1\equiv0$, we conclude with the help of \cite[Lemma~3.2]{BCD-M_regular-poles}, see also \cite[Ch.\,IV~\S26]{Valiron}, that $g|_{(-1,1)}$ has limits at the points~$\pm1$, finite or infinite. This proves~(B).

It remains to prove (C). Fix any two points $\sigma_1\neq\sigma_2$ on~$A$. Denote by $S(\sigma_1,\sigma_2)$  the open disk sector  bounded by the segments $[0,\sigma_1]$, $[0,\sigma_2]$ and the closed subarc~$A(\sigma_1,\sigma_2)$ of $A$ between~$\sigma_1$ and~$\sigma_2$. The set $\partial f(S(\sigma_1,\sigma_2))$ is the union of the Jordan arc $f\big([0,\sigma_1]\cup[0,\sigma_2]\big)$ and a closed connected subset $f\big(A(\sigma_1,\sigma_2)\big)\supset\{f(\sigma_1),\, f(\sigma_2)\}$ of the Jordan curve~$\Gamma$. Therefore, according to \cite[Prop.\,2.5 on p.\,23]{Pommerenke2} each point of~$\partial f(S(\sigma_1,\sigma_2))$ has at most two preimages w.r.t. the restriction of~$f$ to the closure of~$S(\sigma_1,\sigma_2)$. Since $\sigma_1$ and $\sigma_2$ are chosen arbitrarily, it follows that each value of $f|_A$ is taken at most twice. Since $f|_A\colon A\to\Gamma$ is continuous, this implies that $f|_A$ has limits, finite or infinite, at both end-points of~$A$. The proof is now complete.
\end{proof}

Now we collect some results from \cite{CDP,Pavel} which we need in the following.

\begin{proposition}\label{PR_Pavel}
Let $(\phi_{t})$ be a non-trivial
one-parameter semigroup in $\mathbb{D}$ and ${h:\D\to\C}$ the K\oe nigs function of~$(\phi_t)$. The following statements hold:
\begin{mylist}
  \item[(i)] For any $t\geq 0$ and $\sigma\in \de \D$ there exists the angular limit $\phi_t(\sigma):=\angle\lim_{z\to \sigma}\phi_t(z)$.
  \item[(ii)] For any $\sigma\in\partial\UD$ the map $[0,+\infty)\ni t\mapsto \angle\lim_{z\to \sigma}\phi_t(z)$ is continuous.
  \item[(iii)] For any $\sigma\in \de \D$ there exists the angular limit $h(\sigma):=\angle \lim_{z\to \sigma} h(z)\in \C\cup\{\infty\}$. Moreover, $\sigma\in \de \D$ is a boundary fixed point of~$(\phi_t)$ if and only if $h(\sigma)=\infty$.
\end{mylist}
\end{proposition}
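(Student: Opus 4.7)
The strategy is to prove (iii) first, then deduce (i) and (ii) from it by transporting information through the K\oe nigs functional equation $h\circ\phi_t=H_t\circ h$, where $H_t(w)=e^{G'(\tau)t}w$ when $\tau\in\D$ and $H_t(w)=w+t$ when $\tau\in\partial\D$. For the existence in (iii) of $h(\sigma)=\anglim_{z\to\sigma}h(z)\in\C\cup\{\infty\}$, the decisive geometric input is the one-sided invariance $H_t(h(\D))\subseteq h(\D)$ for every $t\ge0$: this makes $h(\D)$ spirallike at~$0$ in the interior DW case and makes it contain every horizontal right-translate of itself in the boundary DW case. In either setting this asymmetry forces the radial image $\{h(r\sigma):r\in(0,1)\}$ to be asymptotically monotone in the appropriate chart, and Lindel\"of's theorem (Theorem~\ref{TH_Lind}(ii)) promotes radial convergence to a genuine angular limit.

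For the fixed-point characterization in (iii), applying Theorem~\ref{TH_Lind} to $h\circ\phi_t=H_t\circ h$ yields $H_t(h(\sigma))=h(\sigma)$ for every $t>0$ whenever $\sigma$ is a boundary fixed point. In the interior DW case this leaves only $h(\sigma)\in\{0,\infty\}$, and univalence of~$h$ with $h(\tau)=0$ and $\tau\in\D$ rules out $h(\sigma)=0$; in the boundary DW case it forces $h(\sigma)=\infty$ immediately. Conversely, if $h(\sigma)=\infty$, then $h(\phi_t(z))=H_t(h(z))\to\infty$ non-tangentially, and the bijective correspondence between boundary points of~$\D$ at which $h$ has angular limit~$\infty$ and the $H_t$-invariant accesses to infinity in~$h(\D)$ gives $\phi_t(\sigma)=\sigma$.

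For (i), since each $\phi_t$ is univalent, Theorem~\ref{TH_Lind}(ii) reduces the existence of $\anglim_{z\to\sigma}\phi_t(z)$ to exhibiting one curve along which $\phi_t$ has a limit. The radial curve $r\mapsto r\sigma$ works: by~(iii), $(h\circ\phi_t)(r\sigma)=H_t(h(r\sigma))\to H_t(h(\sigma))\in\C\cup\{\infty\}$, and pulling back through the univalent map~$h$ yields a limit of $\phi_t(r\sigma)$ in~$\overline\D$, hence the required angular limit. For~(ii), the continuity of $t\mapsto\phi_t(\sigma)$ reduces to the continuity of $t\mapsto H_t(h(\sigma))$ in $\C\cup\{\infty\}$ composed with the boundary extension of~$h^{-1}$; the delicate instants are those $t$ at which the orbit $H_t(h(\sigma))$ meets $\partial h(\D)$ (i.e.\ those $t$ for which $\phi_t(\sigma)$ crosses between $\D$ and $\partial\D$), and are handled by matching one-sided limits through an argument analogous to Lemma~\ref{LM_Lind}.

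The main obstacle will be the converse in the fixed-point characterization of (iii): passing from $h(\sigma)=\infty$ to $\phi_t(\sigma)=\sigma$ for every~$t$. This rests on the rigidity of the affine one-parameter action $H_t$ on~$h(\D)$ together with univalence of~$h$, which together guarantee that distinct boundary points of~$\D$ at which $h$ diverges correspond to distinct accesses to infinity in~$h(\D)$ preserved by the flow, so that no orbit of~$H_t$ can connect two such accesses.
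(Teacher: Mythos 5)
Your overall architecture---establish the existence of angular limits of $h$ first, then transport everything through $h\circ\phi_t=H_t\circ h$ using Lindel\"of's theorem and the correspondence between curves landing at accessible boundary points and their preimages---is essentially the route taken in the references \cite{CDP} and \cite{Pavel} that the paper cites for this proposition (the paper gives no self-contained proof). Several pieces of your plan are sound once made precise: the forward fixed-point implication via $H_t(h(\sigma))=h(\sigma)$ works; the converse works because the radial slit $h([0,\sigma))$ and its translate $h([0,\sigma))+t$ are \emph{equivalent} slits tending to $\infty$ (they are joined inside $h(\UD)$ by the segments $[h(r\sigma),h(r\sigma)+t]$ of fixed length~$t$), so their preimages land at the same point of $\partial\D$; and (i) follows from (iii)---but only if you invoke the Koebe-type lemma (see, \textit{e.g.}, \cite[p.\,35--39]{Goluzin} or \cite[Cor.\,2.17]{Pommerenke2}) asserting that the preimage under a conformal map of a curve landing at a boundary point of the image itself lands at a point of $\partial\D$. ``Pulling back through the univalent map $h$'' is not automatic, since $h^{-1}$ has no continuous boundary extension.

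Two steps are genuine gaps. First, the existence of $\anglim_{z\to\sigma}h(z)$ at \emph{every} $\sigma\in\partial\D$ is the deep half of (iii): it fails for general univalent maps, and your justification (``the invariance forces the radial image to be asymptotically monotone'') is not correct as stated. In the boundary-DW case with $\tau=1$ one has $h'G=1$ and $G(z)=(1-z)^2p(z)$, hence $\tfrac{d}{dr}h(r\sigma)=\sigma\bigl((1-r\sigma)^2p(r\sigma)\bigr)^{-1}$, and for $\sigma\neq 1$ neither $\Re h(r\sigma)$, nor $\Im h(r\sigma)$, nor $|h(r\sigma)|$ has a controlled monotonicity; the monotone quantity you have in mind ($|h|$ or the spiral modulus along radii) exists only in the interior-DW case after a M\"obius precomposition, and even there it controls only the modulus, not the convergence of $h(r\sigma)$ itself. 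The actual proof requires a genuine analysis of spirallike domains, respectively domains satisfying $\Omega+t\subseteq\Omega$. Second, and more seriously, part (ii) is the hardest assertion and your plan does not engage with it: ``composing with the boundary extension of $h^{-1}$'' presupposes an extension that does not exist, and the delicate range is not just the crossing time into $\D$ but the whole interval on which $\phi_t(\sigma)\in\partial\D$, where one must show that boundary points whose angular $h$-limits equal $h(\sigma)+t$ depend continuously on~$t$. This is precisely the prime-end control that Lemma~\ref{LM_Lind} does not supply; note also that the paper's Step~1 in the proof of Theorem~\ref{TH_contact-points} \emph{uses} assertion~(ii) to conclude that the contact set is an arc before running the Carath\'eodory extension argument, so that argument cannot be borrowed here without circularity.
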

The proof of~(i) can be found in \cite[Proof of Theorem\,5, p.\,479]{CDP} and~\cite[Theorem\,3.1]{Pavel}. Assertion (ii) is contained in~\cite[Prop.\,3.2]{Pavel}. Existence of angular limits of $h$ in assertion~(iii) is a consequence of the special geometric properties of K\oe nigs functions, see, \textit{e.g.}, \cite[Prop.~3.4 and comments before]{Pavel}. Finally, the part of~(iii) regarding the boundary fixed points is by \cite[Theorem\,2]{CDP} and \cite[Remark\,3.5]{Pavel}.
\begin{remark}\label{RM_Pavel}
We make a small but useful addition to statement~(i) of the above proposition:
the Lindel\"of theorem applied to $\phi_s$ along the curve~$[0,1)\ni r\mapsto \phi_t(r\sigma)$ shows that the identity $\phi_{s}(\phi_t(\sigma))=\phi_{t+s}(\sigma)$, where $\phi_s(\phi_t(\sigma))$ is to be understood as the angular limit of~$\phi_s$ at the point~$\phi_t(\sigma)$, holds also for all boundary points~$\sigma\in\UC$.
\end{remark}
\begin{remark}\label{RM_no-extra-preim-for-FPs}
One more interesting statement is that if $\phi_{t_0}(\sigma)$ is a boundary fixed point of~$(\phi_t)$ for some $\sigma\in\UC$ and $t_0\ge0$, then $\sigma=\phi_{t_0}(\sigma)$ and thus $\sigma$ is a boundary fixed point itself. Otherwise, according to the previous remark and statement (ii) of the above proposition there would exist a curve, namely the arc $L:=\{\phi_t(\sigma):\,t\in[0,t_0]\}$, joining $\sigma$ with $\phi_{t_0}(\sigma)\neq\sigma$ such that $\phi_{t_0}(L)=\{\phi_{t_0}(\sigma)\}$. This would imply that $\phi_{t_0}$ were constant by Fatou's theorem.
\end{remark}

\section{Contact points which are not fixed}\label{S contact}
In this section we study the relation between non-fixed contact points of one-parameter semigroups and the boundary behaviour of the associated infinitesimal generators. We start with the following definition.

\begin{definition}\label{DF_contact-arc}
An open arc $A\subset\UC$ is said to be a \textsl{contact arc} for an infinitesimal generator~$G$, if
$G$ extends holomorphically to $A$ with $\Re\{\overline z\,G(z)\}=0$ and $G(z)\neq0$ for all $z\in A$.
Each contact arc~$A$ is endowed with a natural orientation induced by the unit vector field $G/|G|$. This determines  the \textsl{initial} and \textsl{final} end-points of~$A$.
\end{definition}

Now we prove our main result about contact points:

\begin{proof}[Proof of Theorem \ref{TH_contact-points}]
Note, first of all, that using \cite[Proposition 2.1]{Pavel} we may assume that the DW-point of $(\phi_t)$ is $\tau=1$.

\step1{(i) implies (ii)} Let $t_1:=\sup\{{t \geq 0}: \phi_t(x)\in \de \D\}$. If $\phi_s(x)\in \D$ for some $s\in (0,+\infty)$, then obviously $\phi_t(x)\in \D$ for all $t\geq s$. Therefore $t_1\in [t_0,+\infty]$ and $\phi_t(x)\in \de \D$ for all $t\in [0,t_1)$. Let $A:=\{\phi_t(x): t\in (0,t_1)\}\subset \de \D$. Note that by Remark~\ref{RM_no-extra-preim-for-FPs}, $A$ does not contain boundary fixed points of $(\phi_t)$.

As a consequence, the map $\gamma:[0,t_1)\ni t\mapsto \phi_t(x)$ is injective. Otherwise we would have $\phi_t(x)=\phi_{t+s}(x)$ for some $t\in[0,t_1)$ and $s\in(0,t_1-t)$, which by Remark~\ref{RM_Pavel} means that $\phi_t(x)$ is a boundary fixed point of~$(\phi_t)$. The map~$\gamma$ is also continuous by Proposition~\ref{PR_Pavel}(ii). Therefore, $A$ is an open arc and $x$ is one of its end-points.

Now fix $s\in (0,t_1)$ and let $A_s:=\{\phi_t(x): t\in (0,s)\}$. Denote by $S$ the open sector $\{r y: y\in A_s, r\in (0,1)\}$ and let $U:=h(S)$, where $h$ is the K\oe nigs function associated to~$(\phi_t)$. By Proposition~\ref{PR_Pavel}(iii), $h$ has a finite non-tangential limit at $x$. With the help of the Abel equation  $h(rx)+t=h(\phi_t(rx))$ for $r\in [0,1)$, see Proposition~\ref{PR_Koenigs-function}(B), it follows that $\lim_{r\to 1} h(\phi_t(rx))=h(x)+t$. Then, by  Lindel\"of Theorem~\ref{TH_Lind} applied to $h$ along the curve~$[0,1)\ni r\mapsto \phi_t(rx)$, we have
$h(\phi_t(x)):=\anglim_{z\to\phi_t(x)}h(z)=h(x)+t.$ Therefore the set $\{h(\sigma)\colon \sigma\in A_s\}$ is the interval $L_0:=\big(h(x),h(x)+s\big)\subset \C$. We claim that $\de U=L_0\cup L_1\cup L_2$, where $L_1:=\{h(rx): r\in [0,1]\}$ and $L_2:=\{h(r\phi_{s}(x)): r\in [0,1]\}$.  Indeed, it is known (see, \textit{e.g.}, \cite[p.\,35\,--\,39]{Goluzin}) that if $f:\D \to \C$ is univalent, then the set
\[
\mathcal C:=\Big\{c\in \C\cup\{\infty\}: \exists\, \sigma\in \de \D: \lim_{(0,1)\ni r\to 1}f(r\sigma)=c\Big\},
\]
is dense in $\de f(\D)$. This fact applied to the composition of $h|_S$ with a conformal mapping of $\D$ onto $S$, implies our claim, because the set $L_0\cup L_1\cup L_2$ is closed.

Therefore, $U$ is a Jordan domain and by the Carath\'eodory extension theorem, see, \textit{e.g.}, \cite[Thm.\,2.6 on p.\,24]{Pommerenke2}, $h:S\to U$ extends to a homeomorphism of~$\overline{S}$ onto $\overline{U}$, which maps $A_s$ onto~$L_0$. By the Schwarz reflection principle, $h$ extends holomorphically through~$A_s$. By the arbitrariness of $s\in(0,t_1)$, in fact $h$ extends holomorphically through the whole arc $A$. Since $h'(z)G(z)=1$ for all $z\in\UD$ (see Proposition~\ref{PR_Koenigs-function}(B)), also $G$ extends holomorphically through $A$. Moreover, bearing in mind that $h(A)$ is an interval parallel to the real axis, with $h(x)$ being its left end-point, and using again $h'(z)G(z)=1$, we see that $A$ is, in fact, a contact arc for~$G$ with  initial point $x$.

\step2{(ii) implies (i) and (iii)}
Consider the local flow of $G$ on the arc~$A$. Since $G$ is holomorphic in a neighborhood of~$A$ and tangent to~$A$ at every point, it follows that for each $y\in A$, there exists the maximal solution $w_y:I_y\to A$  to the initial value problem
$$
\mydot w_y\!(t)=G|_A(w_y(t)),\quad t\ge0,\qquad w_y(0)=y,
$$
and that $\phi_t$ has a holomorphic extension to a neighborhood of~$y$ with $\phi_t(y)=w_y(t)$ for all $t\in I_y$.
Fix any $y_0\in A$ and let $A_1$ be the subarc of $A$ between $x$ and $y_0$. It is easy to see that $I_{y_0}\subset I_y$ for any $y\in A_1$. Therefore, for all $t\in I_{y_0}$ the function $\phi_t$ extends homeomorphically to $A_1$, with $\phi_t(A_1)\subset\partial\UD$. Recall that all $\phi_t$'s are univalent in~$\UD$. Therefore, according to Lemma~\ref{LM_Lind} there exists $\lim_{A_1\ni y\to x}\phi_t(y)=\phi_t(x)$. Since $\phi_t(x)\neq x$ for all $t>0$, it follows that $\phi_t(x)\in A$ for all $t\in I_{y_0}\setminus\{0\}$. This proves~(i).

Now fix any $s\in I_{y_0}$ and let $y_1:=\phi_s(x)$, $t_1:=s+\sup I_{y_1}$. Since $G$ has no zeros on~$A$, it follows that $(s,t_1)\ni t\mapsto \phi_t(x)=w_{y_1}(t-s)$ is a homeomorphic map onto the open subarc of $A$ between $y_1$ and the final point of~$A$. This proves (iii), because by Proposition~\ref{PR_Pavel}(ii), $\phi_s(x)\to x$ as $s\to0^+$.
\end{proof}

\begin{definition}
Let $G: \D \to \C$ be an infinitesimal generator. A contact arc $A\subset \de \D$ for $G$ is called {\sl maximal} if there exists no other contact arc $A_1$ for $G$ such that $A_1\supsetneq A$.
\end{definition}

By Theorem \ref{TH_contact-points} to any contact not fixed boundary point of $(\phi_t)$ there corresponds a unique maximal contact arc for $G$.

We classify maximal contact arcs and describe more in details the dynamics of the semigroup on them:

\begin{proposition}\label{PR_contact_arcs}
Let $(\phi_t)$ be a one-parameter semigroup with  Denjoy\,--\,Wolff point ${\tau\in\oD}$ and  associated infinitesimal generator~$G$. Suppose that $A_0\subset\UC$ is a maximal contact arc for~$G$. Let $x_0$ and $x_1$ be the initial and final points of~$A_0$, respectively. Then the following statements hold:
\begin{mylist}
\item[(i)] one of the two alternatives takes place:
\begin{itemize}
\item[(i.1)] $x_0$ is a boundary fixed point of~$(\phi_{t})$, or
\item[(i.2)]  $x_0$ is a contact (not fixed) point of~$\phi_{t_0}$ for some $t_0>0$;
\end{itemize}

\item[(ii)] one of the two alternatives takes place:
\begin{itemize}
\item[(ii.1)] $x_1=\tau$, or
\item[(ii.2)] $\phi_t(x_1)\in \D$ for all $t>0$;
\end{itemize}
\item[(iii)] if $x\in A_0$, then the life-time $t_1(x,A_0)<+\infty$  if and only if $\phi_t(x_1)\in \D$ for all $t>0$;
\end{mylist}
Moreover,
\begin{mylist}
\item[(a1)]  the limit $G(x_0):=\lim_{A_0\ni x\to x_0}G(x)$ exists  finitely;
\item[(a2)] if $x_0$ is a boundary fixed point of~$(\phi_{t})$, then $G(x_0)=0$;
\item[(b1)] the limit $G(x_1):=\lim_{A_0\ni x\to x_1}G(x)$ exists, finite or infinite;
\item[(b2)] $G(x_1)=0$ if and only if $x_1=\tau$.
\end{mylist}
\end{proposition}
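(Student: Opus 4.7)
The plan is to transfer everything about the maximal contact arc~$A_0$ to its image $h(A_0)$ under the K\oe nigs function~$h$ of~$(\phi_t)$. As in the proof of Theorem~\ref{TH_contact-points}, I would assume without loss of generality $\tau=1\in\de\D$, so Proposition~\ref{PR_Koenigs-function}(B) gives the additive Abel equation $h\circ\phi_t=h+t$; the interior DW case is analogous after linearising via $\log h$. The construction in the proof of Theorem~\ref{TH_contact-points} already extends $h$ holomorphically across $A_0$, and $h(A_0)=(a,b)+ic_0$ is an open horizontal segment with $-\infty\le a<b\le+\infty$. Applying Lemma~\ref{LM_Lind}(A) to the univalent~$h$ at each endpoint of~$A_0$, the angular limits $h(x_0)$, $h(x_1)$ equal $a+ic_0$, $b+ic_0$, with the convention that these are $\infty$ when $a=-\infty$ or $b=+\infty$; and Proposition~\ref{PR_Pavel}(iii) says $h(x_j)=\infty$ is exactly the condition that $x_j$ be a boundary fixed point. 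In particular~(iii) is immediate: $h(\phi_t(x))=h(x)+t$ gives $t_1(x,A_0)=b-\Re h(x)$, finite iff $b<+\infty$, which by~(ii) below is equivalent to $\phi_t(x_1)\in\D$ for all $t>0$.

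For~(i) in the case $a=-\infty$, alternative~(i.1) follows from the discussion above. When $a\in\R$, I would Schwarz-reflect $h^{-1}$ across the boundary segment $h(A_0)$, using the inversion $w\mapsto 1/\overline w$ to reflect values across~$\de\D$. This extends $h^{-1}$ holomorphically to a neighbourhood of each interior point of $h(A_0)$, and Riemann's removable singularity theorem, applied at the endpoint $c:=a+ic_0$ where $h^{-1}$ is bounded in~$\oD$, gives a holomorphic extension with $h^{-1}(c)=x_0$. Local injectivity there (and hence $(h^{-1})'(c)\neq 0$) follows because the two sides of the segment map into $\D$ and into the exterior of~$\oD$ respectively, while the segment itself maps into $A_0\subset\de\D$. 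Substituting into the Abel equation $\phi_s(z)=h^{-1}(h(z)+s)$ and letting $z\to x_0$ non-tangentially yields $\phi_s(x_0)=h^{-1}(c+s)\in A_0$ for every $s\in(0,b-a)$, proving~(i.2).

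For~(ii), the same reflection/removable-singularity argument applied at the opposite endpoint $b+ic_0$ handles the case $b<+\infty$: if we assume for contradiction that $\phi_{t_0}(x_1)\in\de\D$ for some $t_0>0$, Theorem~\ref{TH_contact-points} makes $x_1$ the initial point of a maximal contact arc~$A_1$, the extension makes $h$ a local biholomorphism across~$x_1$, and $A_0\cup\{x_1\}\cup A_1$ becomes a contact arc strictly larger than~$A_0$, contradicting maximality. The harder case $b=+\infty$, which I expect to be the main obstacle, runs as follows. Here $x_1$ is a boundary fixed point, the forward life-time of any $y\in A_0$ is $+\infty$ whereas the arc-length of $A_0$ between $y$ and $x_1$ is finite, so $\int_0^{+\infty}|G(\phi_t(y))|\,dt<+\infty$; since $G$ has a limit along~$A_0$ at $x_1$ by Lemma~\ref{LM_Lind}(B), that limit must be~$0$, and Lemma~\ref{LM_Lind}(A) promotes this to $\anglim_{z\to x_1}G(z)=0$. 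The holomorphic extension of each~$\phi_t$ through~$A_0$ gives $\phi_t(x_1)=x_1$ and shows that $\phi_t$ contracts arc-length distance to~$x_1$ on a one-sided neighbourhood in~$A_0$; translated into Julia's lemma (via $\phi_t'(y)=h'(y)/h'(\phi_t(y))$ along $A_0$ and passing to the limit $y\to x_1$), this forces $\alpha_{x_1}(\phi_t)\le 1$. Since the Denjoy\,--\,Wolff point is the only boundary fixed point of~$(\phi_t)$ with this property, $x_1=\tau$.

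Finally, (b1) (and its analogue at~$x_0$) is immediate from Lemma~\ref{LM_Lind}(B). Statement~(a2), and the implication $x_1=\tau\Rightarrow G(x_1)=0$ in~(b2), follow by reusing the flow-integral computation (and, in the second, the identity $\anglim_{z\to\tau}G(z)=0$ recalled in the Introduction). For~(a1): in the case $a=-\infty$ it is covered by~(a2); in the case $a\in\R$, the local biholomorphy of~$h$ at~$x_0$ from (i.2) together with $G=1/h'$ gives $G(x_0)=1/h'(x_0)\in\C^*$. The converse $G(x_1)=0\Rightarrow x_1=\tau$ in~(b2) uses the same flow-integral argument in reverse to force $b=+\infty$, after which~(ii) concludes.
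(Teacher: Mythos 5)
Your overall linearization strategy, and your treatment of (iii), (i.1), (a2), (b1) and the forward half of (b2), are sound and close in spirit to the paper's proof. But the central device you lean on --- Schwarz reflection of $h^{-1}$ across $h(A_0)$ followed by Riemann's removable singularity theorem at the endpoint $c=a+ic_0$, yielding a local biholomorphism of $h$ at $x_0$ (and at $x_1$ when $b<\infty$) --- does not work. The reflected function is holomorphic only on $V\cup h(A_0)\cup V^*$, where $V$ is a one-sided neighbourhood of the \emph{open} segment $h(A_0)$ inside $h(\UD)$ and $V^*$ is its mirror image; this set contains a punctured neighbourhood of each \emph{interior} point of the segment but in general no punctured neighbourhood of the endpoint $c$, since $h(\UD)$ may form an arbitrarily thin corner or cusp at $c$. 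So Riemann's theorem is not applicable there, and the conclusion you draw from it is in fact false: for the generator $G(z)=-i(1-z)^2\bigl(\tfrac{i-z}{1-iz}\bigr)^\alpha$, $\alpha\in(0,1)$, from Section~\ref{S_fractional}, the initial point $x_0=i$ of the maximal contact arc is not a fixed point and yet $G(x_0)=0$, i.e.\ $h'=1/G$ blows up at $x_0$; dually $h'\to0$ at the final point $-i$. In particular your claim in (a1) that $G(x_0)=1/h'(x_0)\in\Complex^*$ in the non-fixed case is wrong (the statement only asserts finiteness, and that is all that is true). The paper gets (a1) instead by noting that the leftward ray $R$ through $h(x_0)+s$ misses $h(\UD)$, so $\varphi:=h_0^{-1}\circ h$ (with $h_0$ mapping $\UD$ onto $\Complex\setminus R$) is a self-map of $\UD$ with a contact point at $x_0$, and the Julia--Wolff--Carath\'eodory theorem gives a non-vanishing (possibly infinite) angular derivative of $h$ there. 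Your (i.2) and the $b<\infty$ case of (ii) can be repaired without local biholomorphy --- (i.2) needs only continuity of $h^{-1}$ at interior points of the segment, and the maximality contradiction in (ii) can be run at the level of orbits as in the paper, via Remark~\ref{RM_Pavel} and Step~1 of Theorem~\ref{TH_contact-points} --- but as written both routes pass through the broken step.

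The second, independent gap is the converse implication in (b2): $G(x_1)=0\Rightarrow x_1=\tau$. You propose to ``use the flow-integral argument in reverse to force $b=+\infty$'', but the travel time from $y\in A_0$ to $x_1$ is $\int|ds|/|G|$ along the arc, and this integral can perfectly well converge while $G\to0$ at $x_1$ (e.g.\ $|G|\asymp d(\cdot,x_1)^{1/2}$), so vanishing of $G(x_1)$ does not force an infinite life-time. Ruling this scenario out is the genuinely hard part of the proposition: the paper shows that when $x_1\neq\tau$ the curve $r\mapsto h(rx)+t_1$ is a slit in a semistrip contained in $h(\UD)$, invokes \cite[Thm.\,10.6]{Pommerenke} to get a \emph{finite} angular derivative of $h$ at $x_1$, and concludes $G(x_1)=1/h'(x_1)\neq0$. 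Nothing in your argument substitutes for this. Separately, your derivation of $x_1=\tau$ in the $b=+\infty$ case via $\al_{x_1}(\phi_t)\le1$ is a plausible alternative to the paper's citation of \cite[Theorem\,4]{CDP} and \cite[Remark~5.1]{Pavel}, but the passage from contraction of arc-length along $A_0$ to a bound on the unrestricted $\liminf$ defining $\al_{x_1}(\phi_t)$ needs to be written out carefully (one must produce interior points $z_n\to x_1$ realizing the bound), and as presented it is only a sketch.
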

\begin{remark}
It is evident from the proof given below that assertion~(iii) of Proposition~\ref{PR_contact_arcs}
is also true for $x:=x_0$ provided $x_0$ is not a boundary fixed point of~$(\phi_t)$.
\end{remark}

\begin{proof}[Proof of Proposition~\ref{PR_contact_arcs}.]
Statement~(i) follows readily from Theorem~\ref{TH_contact-points}. To prove  statements (ii) and (iii), let first assume $t_1:=t_1(x,A_0)<+\infty$ for some $x\in A_0$. By Theorem~\ref{TH_contact-points}(iii), $\lim_{t\to t_1^-}\phi_t(x)=x_1$. Then by Proposition~\ref{PR_Pavel}(ii), $\phi_{t_1}(x)=x_1$. Consequently, $\phi_t(x_1)\in\UD$ for all $t>0$. Indeed,  if $x_1$ were a contact point of $\phi_{s}$ for some $s>0$, then  according to Remark~\ref{RM_Pavel}, $x$ would be a contact point of~$\phi_{t_1+t}$ for all $t\in [0,s]$. Together with the fact that $x$ is not a boundary fixed point of~$(\phi_t)$ this would contradict  the maximality of $A_0$.

Next, assume that $x\in A_0$ and $t_1:=t_1(x,A_0)=+\infty$. Then $x$ is a contact point of~$\phi_t$ for all~$t\ge0$, but not a boundary fixed point of~$(\phi_t)$. Therefore, by~\cite[Theorem\,\,4]{CDP}, $\tau\in\UC$ and hence, by~\cite[Remark~5.1]{Pavel}, $\phi_t(x)\to\tau$ as $t\to+\infty$. Since  $\lim_{t\to t_1^-}\phi_t(x)=x_1$, it follows that $x_1=\tau$.

In order to prove (a1), (a2), (b1) and (b2), using \cite[Proposition 2.1]{Pavel}, we will assume that the Denjoy\,--\,Wolff point of $(\phi_t)$ is $\tau=1$. Then, according to Proposition~\ref{PR_Koenigs-function}(B).(ii), the K\oe nigs function~$h$ of~$(\phi_t)$ extends holomorphically to~$A_0$, with $h(A_0)$ being a subset of a straight line.

Applying Lemma~\ref{LM_Lind} to $f:=G$ and $A:=A_0$ one proves~(b1) and a part of statement~(a1).
In order to complete the proof of (a1) and prove (a2), we suppose first that $x_0$ is a boundary fixed point of~$(\phi_t)$. Then by Proposition~\ref{PR_Pavel}(iii) and Lemma~\ref{LM_Lind} applied to $f:=h$, we have $h(x)\to\infty$ as $A_0\ni x\to x_0$. It follows that $\limsup_{A_0\ni x\to x_0}|h'(x)|=+\infty$, which in view of Proposition~\ref{PR_Koenigs-function}(B).(ii) implies that $\lim_{A_0\ni x\to x_0} G(x)=0$, proving (a2) and (a1) in this case.

Now suppose that $x_0$ is not a fixed point. By the argument in Step~1 in the proof of Theorem~\ref{TH_contact-points}, $\partial h(\UD)$ contains a segment $[h(x_0),h(x_0)+s]$ for some~$s>0$. It follows, in particular, that the ray $R:=\{h(x_0)+s-\xi\colon \xi\ge0\}$ does not intersect~$h(\UD)$ and hence $\varphi:=h_0^{-1}\circ h$, where $h_0$ is the conformal map of $\UD$ onto $\Complex\setminus R$, is a well-defined self-map of $\UD$ and has a contact point at~$x_0$. By the Julia\,--\,Wolff\,--\,Carath\'eodory theorem the angular derivative $\varphi'(x_0)$ exists, finite or infinite, and does not vanish. This implies that the angular derivative of $h$ at~$x_0$ exists and does not vanish.  According to Proposition~\ref{PR_Koenigs-function}(B).(ii) and Lemma~\ref{LM_Lind}, it follows that $G(x_0)\neq\infty$, which completes the proof of~(a1).

In order to prove (b2), we first assume $x_1=\tau=1$. In this case by \cite[Theorem 1]{CDP2}, $\anglim_{z\to\sigma}G(z)=0$ and hence by Lemma~\ref{LM_Lind} applied to~$f:=G$, the limit of $G|_A$ at~$x_1$ also (exists and) equals~$0$.

In case $x_1$ is not the Denjoy\,--\,Wolff point of $(\phi_t)$, by Proposition~\ref{PR_Pavel}(iii), $h(x_1)\in\Complex$. Take any $x\in A_0$. According to what we already proved and by the argument in Step~1 in the proof of Theorem~\ref{TH_contact-points} it is easy to see that:
\begin{mylist}
\item[(a)] there exists $t_1:=t_1(x,A_0)>0$ such that $\phi_{t_1}(x)=x_1$;
\item[(b)] $h(x_1)=h(x)+t_1$;
\end{mylist}
Moreover,
\begin{mylist}
\item[(c)] if $w\in h(\UD)$, then $w+\xi=h\big(\phi_{\xi}(h^{-1}(w))\big)\in h(\UD)$ for all $\xi\ge0$;
\end{mylist}
and in particular,
\begin{mylist}
\item[(d)] the curve $[0,1)\ni r \mapsto h(rx)$ does not intersect the ray $\{h(x_1)-\xi:\,\xi\ge0\}$.
\end{mylist}

By (a) and (b) there exists $r_1\in(0,1)$ such that $u_0:=\max_{r_1\le r<1} \Re h(rx)<\Re h(x)+t_1=\Re h(x_1)$. Let $r_0$ be any of the points at which $[r_1,1)\ni r\mapsto |\Im h(rx)-\Im h(x)|$ achieves its maximum value, and let $v_0:=\Im h(r_0x)$. Note that $v_0\neq\Im h(x)$. Choose now $r_2\in(r_1,1)$ such that $t_1+\Re h(rx)+t_1>u_0$ for all $r\in[r_2,1)$. Taking into account (b)\,--\,(d), we conclude that:
\begin{mylist}
\item[(e)] the semistrip $\Pi$ bounded by the rays $$\{w:\,\Im w=\Im h(x),\,\Re w\ge u_0\},\quad \{w:\,\Im w=v_0,\,\Re w\ge u_0\}$$ and the segment $[u_0+iv_0,u_0+i\Im h(x)]$, is contained in $h(\UD)$;
\item[(f)] the curve $\gamma:[r_2,1)\ni r\mapsto h(rx)+t_1$ is a slit in~$\Pi$.
\end{mylist}
From the Abel equation $h(rx)+t_1=h(\phi_{t_1}(rx))$ for all $r\in[r_2,1)$ it follows that
\begin{mylist}
\item[(g)] $h^{-1}\circ\gamma$ is a slit in~$\UD$ landing at~$\phi_{t_1}(x)=x_1$.
\end{mylist}
Now by \cite[Thm.\,10.6 on p.\,307]{Pommerenke}, $h$ has a finite angular derivative at~$x_1$. Therefore, thanks to equality~(ii) in Proposition~\ref{PR_Koenigs-function}(B), $\anglim_{z\to x_1} G(z)$ exists, finite or infinite, but cannot be equal to~$0$. Applying Lemma~\ref{LM_Lind} to $f:=G$, we conclude that the same holds for $\lim_{A\ni \sigma\to x_1} G(\sigma)$. The proof is now complete.
\end{proof}

By Theorem \ref{TH_contact-points} and Proposition \ref{PR_contact_arcs}, for every $t>0$ the map $\phi_t$ can have at most a countable number of contact non-fixed points in which $\phi_t$ has no  holomorphic extension (the initial points of maximal contact arcs), hence:

\begin{corollary}\label{CR_non-embedd}
If $\varphi\in\Hol(\UD,\UD)$ has an uncountable set of contact points which are not boundary fixed points of~$\varphi$ and at which~$\varphi$ has no holomorphic extension, then $\varphi$ cannot be embedded in a one-parameter semigroup.
\end{corollary}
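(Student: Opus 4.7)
The plan is to argue by contradiction. Suppose $\varphi = \phi_{t_0}$ for some one-parameter semigroup $(\phi_t)$ with infinitesimal generator $G$ and some $t_0 > 0$, and let $X$ denote the hypothetical uncountable set of contact non-fixed points of $\varphi$ at which $\varphi$ admits no holomorphic extension. The strategy is to define an injection from $X$ into the set of maximal contact arcs of $G$ and then observe that the latter is at most countable.

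For each $x \in X$, Theorem~\ref{TH_contact-points} produces a contact arc $A$ of $G$ with $x$ as its initial point. A routine Zorn argument---noting that a nested union of contact arcs is itself a contact arc---embeds $A$ in a unique maximal contact arc $A_x$. The crucial point to verify is that $x$ remains the initial point of $A_x$ rather than becoming an interior point. If instead $x$ were interior to $A_x$, then by Definition~\ref{DF_contact-arc} the vector field $G$ would extend holomorphically to a complex neighborhood of $x$ with $G(x) \neq 0$, and the local-flow theorem applied to $\dot w = G(w)$ would provide a holomorphic extension of $\phi_s$ to a complex neighborhood of $x$ for all sufficiently small $s > 0$. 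Iterating along the forward orbit of $x$ (which, by Theorem~\ref{TH_contact-points}(iii) and Proposition~\ref{PR_contact_arcs}(iii), either remains on $A_x$ or eventually enters $\UD$, where holomorphy is automatic) would yield a holomorphic extension of $\varphi = \phi_{t_0}$ to a full neighborhood of $x$ in $\Complex$, contradicting $x \in X$.

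Once this is established, the map $X \ni x \mapsto A_x$ is injective, since each maximal contact arc has a unique initial point. Distinct maximal contact arcs are pairwise disjoint, for otherwise any overlap would force their union to be a strictly larger contact arc, violating maximality of at least one of them. Since $\partial \UD$ admits only countably many pairwise disjoint open sub-arcs (each contains a rational argument), $X$ must be at most countable, contradicting the hypothesis.

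The main obstacle I foresee is the iteration step producing a holomorphic extension of $\phi_{t_0}$ near an interior point of a contact arc: a single local-flow step only extends $\phi_s$ for $s$ small, and careful tracking of the orbit of $x$ as it traverses $A_x$ and possibly enters $\UD$ is needed. A cleaner route, closer in spirit to Step~1 of the proof of Theorem~\ref{TH_contact-points}, is to invoke the fact established there that the K\oe nigs function $h$ extends holomorphically through $A_x$ and then write $\phi_{t_0} = h^{-1}\circ (h + t_0)$ via the Abel equation (or its multiplicative analog when the Denjoy\,--\,Wolff point lies in $\UD$); local invertibility of $h$ at $x$ then immediately yields holomorphy of $\phi_{t_0}$ there, closing the argument.
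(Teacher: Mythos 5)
Your overall strategy is the same as the paper's one\-/line justification (the relevant points are accounted for by maximal contact arcs, and a pairwise disjoint family of open subarcs of $\UC$ is countable), but there is a concrete gap in the step you yourself single out as crucial: the claim that every $x\in X$ must be the \emph{initial} point of its maximal contact arc $A_x$, i.e.\ that $\phi_{t_0}$ extends holomorphically at every \emph{interior} point of a maximal contact arc. Suppose $x$ is an interior point of a maximal contact arc $A_0$ with final point $x_1$ and that the life-time satisfies $t_1(x,A_0)=t_0$ exactly, so that $\phi_{t_0}(x)=x_1$. Then both of your routes break down at the last instant. For the local-flow route: in this situation $\phi_t(x_1)\in\UD$ for all $t>0$ (Proposition~\ref{PR_contact_arcs}(ii)--(iii)), so $x_1$ is not a stationary point of the flow, and by maximality of $A_0$ the generator $G$ does not extend holomorphically as a tangent, non-vanishing vector field past $x_1$; the iteration cannot be carried through the moment the orbit reaches $x_1$. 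For the Abel-equation route: what you need is not local invertibility of $h$ at $x$ but holomorphic extendability of $h^{-1}$ near $h(x)+t_0=h(x_1)$, and $h(x_1)$ is an \emph{endpoint} of the boundary segment $h(A_0)$, where Schwarz reflection is unavailable; generically $\partial h(\UD)$ has a corner there and $\phi_{t_0}$ genuinely fails to extend holomorphically at $x$. Such an $x$ belongs to $X$ yet is not the initial point of any maximal contact arc, so the map $x\mapsto A_x$ need not be injective as you claim.

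The damage is limited and easily repaired. Since $t\mapsto\phi_t(x)$ is a homeomorphism of the life-time interval onto the terminal subarc (Theorem~\ref{TH_contact-points}(iii)), for a fixed $t_0$ each maximal contact arc contains at most one interior point whose life-time equals $t_0$; hence $x\mapsto A_x$ is at most two-to-one, $X$ is still countable, and the corollary follows. So you should either add this one-extra-point-per-arc count or restrict the injectivity claim to the initial points. The remaining ingredients of your argument (existence and uniqueness of the maximal contact arc containing a given contact arc, pairwise disjointness of distinct maximal arcs, countability of disjoint families of open arcs) are correct and match what the paper intends.
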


An example of  a \textsl{univalent map} $\varphi$ continuous up to~$\de\UD$ and having an uncountable set of non-fixed contact points at which~$\varphi$ has no holomorphic extension, can be obtained by considering the one-parameter semigroup $(\phi_t)$ with uncountable set of boundary super-repulsive fixed points defined in~\cite[p.\,260]{AnalyticFlows} and setting $\varphi:=i\phi_t$ for any arbitrary $t>0$.

We conclude this section showing that the holomorphic extendability of the infinitesimal generator in the definition of a contact arc can be replaced by a weaker condition formulated in terms of radial limits.

\begin{proposition}\label{PE_rad-lims-generator}
Let $G$ be an infinitesimal generator in~$\UD$ and let $A\subset\UC$ be an open arc.  Then $A$ is a contact arc for $G$ if and only if $\lim_{(0,1)\ni r\to 1}\Re\{\overline\sigma\, G(r\sigma)\}=0$  and $\limsup_{(0,1)\ni r\to 1}|\Im G(r\sigma)|\neq 0$ for any $\sigma\in A$.
\end{proposition}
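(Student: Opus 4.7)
The forward direction is immediate from the definition of a contact arc. If $G$ extends holomorphically through $A$ with $\Re\{\overline\sigma G(\sigma)\}=0$ and $G(\sigma)\neq0$ for $\sigma\in A$, then continuity gives $\lim_{r\to1}\Re\{\overline\sigma G(r\sigma)\}=\Re\{\overline\sigma G(\sigma)\}=0$. Moreover, tangency plus nonvanishing force $G(\sigma)=i\lambda\sigma$ with $\lambda\in\R\setminus\{0\}$, hence $\Im G(\sigma)=\lambda\Re\sigma$, which is nonzero for $\sigma\in A$ away from the (at most two) exceptional points $\pm i$, so the $\limsup$ condition holds.

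For the converse, the plan is to pass through the Berkson--Porta representation $G(z)=(\tau-z)(1-\overline\tau z)p(z)$ with $p\in\Hol(\UD,\Complex)$, $\Re p\ge 0$, and to deduce holomorphic extendability of $p$ (hence of $G$) across $A$. First, I would exclude $\tau\in A$: by \cite[Theorem 1]{CDP2} the angular limit of $G$ at the Denjoy--Wolff point is $0$, so the radial limit of $\Im G$ at $\tau$ is $0$, contradicting the second hypothesis. For $\sigma\in A$, an elementary computation yields
\[
\overline\sigma\,G(r\sigma)=c(r)\,p(r\sigma),\qquad c(r)\to c_0:=-|1-\overline\tau\sigma|^2<0,\qquad \Im c(r)=(r^2-1)\Im\{\overline\tau\sigma\}=O(1-r).
\]

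The technical heart of the argument is showing that the Herglotz measure $\mu$ of $p$ satisfies $\mu|_A=0$. Assume for contradiction that $\mu|_A\neq0$. By the standard Lebesgue differentiation theory for Poisson--Stieltjes integrals of positive measures, one can locate a point $\sigma_0\in A$ at which $\Re p(r\sigma_0)$ tends to a value in $(0,+\infty]$: either $\mu$ has an atom at $\sigma_0$, or $\sigma_0$ is a Lebesgue point of the absolutely continuous part with positive density, or $\sigma_0$ is a point where the singular part has infinite Radon--Nikodym derivative. The universal estimate $|p(z)|\le M/(1-|z|)$ combined with $|\Im c(r)|=O(1-r)$ makes $|\Im c(r)\,\Im p(r\sigma_0)|$ uniformly bounded, whereas $\Re c(r)\,\Re p(r\sigma_0)\to c_0\cdot\lim_r\Re p(r\sigma_0)$ is strictly negative or $-\infty$; consequently $\Re\{\overline{\sigma_0}\,G(r\sigma_0)\}$ cannot tend to $0$, contradicting the first hypothesis. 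Hence $\mu|_A=0$, the Herglotz integral is holomorphic on $\Complex\setminus\operatorname{supp}\mu\supset A$, and $p$ --- therefore $G$ --- extends holomorphically through $A$ with $\Re p\equiv0$ on $A$, which gives $\Re\{\overline\sigma G(\sigma)\}=0$ for $\sigma\in A$.

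It remains to verify that $G$ has no zeros on $A$: if $G(\sigma_0)=0$ for some $\sigma_0\in A$, then by continuity of the extended $G$ one would have $\Im G(r\sigma_0)\to\Im G(\sigma_0)=0$, again in contradiction with the second hypothesis. I expect the subtlest point will be the Lebesgue-point subcase of the Herglotz analysis, where one must use the boundedness of the conjugate function $\Im p$ at a Lebesgue point of the absolutely continuous density of $\mu$ in order to exclude spurious cancellation between the terms $\Re c(r)\Re p(r\sigma_0)$ and $\Im c(r)\Im p(r\sigma_0)$.
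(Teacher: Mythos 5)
Your architecture for the converse coincides with the paper's (Proposition~\ref{PE_rad-lims-generator} is deduced there from Lemma~\ref{LE_rad-lims-generator}): pass to the Berkson--Porta factor $p$, show its Herglotz measure assigns no mass to $A$, and extend. But there is a genuine gap at the cancellation step, and you have put your finger on it yourself without closing it. In the subcase where $\Re p(r\sigma_0)$ tends to a \emph{finite} positive limit $L$, knowing only that $\Im c(r)\,\Im p(r\sigma_0)$ is \emph{bounded} does not rule out $\Re\{\overline{\sigma_0}\,G(r\sigma_0)\}=\Re c(r)\Re p(r\sigma_0)-\Im c(r)\Im p(r\sigma_0)\to 0$: the bounded cross-term could perfectly well converge to $c_0L$. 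Moreover, your proposed repair --- ``boundedness of the conjugate function $\Im p$ at a Lebesgue point of the density'' --- is false in general: $p(z)=1+i\log\frac1{1-z}$ is a Herglotz function whose real part has radial limit $1$ at $z=1$ while $\Im p(r)\to+\infty$. What actually saves the argument is the weaker but sufficient fact that $(1-r)\,p(r\sigma_0)\to 2\mu(\{\sigma_0\})$; hence in the absence of an atom $(1-r)\Im p(r\sigma_0)\to0$ and $\Im c(r)\Im p(r\sigma_0)\to0$, not merely $O(1)$ (in the atom case $\Re c(r)\Re p(r\sigma_0)\to-\infty$ and boundedness of the cross-term does suffice). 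This is precisely the role played in the paper by the preliminary step~\eqref{EQ_no-reg-poles}, obtained from \cite[Lemma~3.3]{BCD-M_regular-poles}, which establishes $G(r\sigma)(1-r)\to0$ for every $\sigma\in A$ \emph{before} the Herglotz analysis begins; with that in hand the hypothesis becomes exactly $\Re p(r\sigma)\to0$ on $A$, and the rest of your differentiation argument (or the paper's direct route via the covering lemma \cite[Lemma~1(i) on p.\,37]{Evans-G}) goes through.

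Two smaller remarks. Your exclusion of $\tau\in A$ via the second hypothesis is a legitimate shortcut for the Proposition; the paper must instead treat $\tau\in A$ by reflection because its Lemma~\ref{LE_rad-lims-generator} lacks that hypothesis. In the forward direction you correctly observe that tangency forces $G(\sigma)=i\lambda(\sigma)\sigma$ with $\lambda(\sigma)\in\Real\setminus\{0\}$, so that $\Im G(\sigma)=\lambda(\sigma)\Re\sigma$ vanishes at $\sigma=\pm i$; but the ensuing claim that ``the $\limsup$ condition holds'' is then wrong whenever $\pm i\in A$. This is really a defect of the statement as printed (the natural quantity is $\Im\{\overline\sigma\,G(r\sigma)\}$, whose limit is $\lambda(\sigma)\neq0$), and it should be flagged as such rather than asserted away.
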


This follows at once from the following lemma, which might be interesting by its own:

\begin{lemma}\label{LE_rad-lims-generator}
Let $G$ be an infinitesimal generator in~$\UD$ and let $A\subset\UC$ be an open arc.  Suppose that
$\lim_{(0,1)\ni r\to 1}\Re\{\overline\sigma\, G(r\sigma)\}=0$ for any $\sigma\in A$, then $G$ has a holomorphic extension to~$A$.
\end{lemma}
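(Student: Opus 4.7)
The strategy is to use the Berkson\,--\,Porta factorization~\eqref{EQ_B-P-representation}, $G(z)=(z-\tau)(\overline\tau z-1)\,p(z)$ with $\Re p\ge 0$, and to reduce the problem to showing that $p$ extends holomorphically across~$A$. By Herglotz's theorem, $p$ admits the representation
\[
p(z)=i\beta+\int_{\UC}\frac{\xi+z}{\xi-z}\,d\mu(\xi),\qquad \beta\in\R,\ \mu\ \text{a non-negative Borel measure on }\UC,
\]
and since the integrand is holomorphic in $z\in\Complex\setminus\mathrm{supp}\,\mu$, it suffices to show that $\mu(A)=0$. We may clearly assume $\Re p\not\equiv 0$; otherwise $p$ is a purely imaginary constant and the conclusion is trivial.

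I would begin with a direct calculation. Assuming without loss of generality that $\tau\notin A$ (splitting $A$ into two sub-arcs otherwise), write $\overline\sigma\, G(r\sigma)=a(r,\sigma)\,p(r\sigma)$ and expand into real and imaginary parts to get
\[
\Re\{\overline\sigma\, G(r\sigma)\}=-A(r,\sigma)\,\Re p(r\sigma)+B(r,\sigma)\,\Im p(r\sigma),
\]
where $A(r,\sigma)\to|1-\overline\tau\sigma|^2>0$ and $B(r,\sigma)=(1-r^2)\Im(\overline\tau\sigma)=O(1-r)$ as $r\to 1^-$. Combined with the standard Herglotz growth estimate $|p(z)|\le 2|p(0)|/(1-|z|)$ (from Schwarz\,--\,Pick applied to the Cayley image of $p$), this yields $|B(r,\sigma)\,\Im p(r\sigma)|=O(1)$; together with the hypothesis $\Re\{\overline\sigma G(r\sigma)\}\to 0$ and $A(r,\sigma)\to A_0(\sigma)>0$, this forces $\Re p(r\sigma)$ to remain bounded as $r\to 1^-$ at every $\sigma\in A$.

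To conclude $\mu|_A=0$ I would then invoke two classical facts about the boundary behaviour of positive harmonic measures on~$\UD$. First, at $\mu_s$-almost every point the radial limit of the Poisson integral of~$\mu$ equals $+\infty$; hence, if $\mu_s|_A$ were non-zero, some $\sigma_0\in A$ would witness $\Re p(r\sigma_0)\to+\infty$, contradicting the boundedness just established. Second, by Fatou's theorem the non-tangential boundary limit of $p$ is finite almost everywhere; at every such $\sigma\in A$ the main identity together with $B(r,\sigma)\to 0$ yields $\Re p(\sigma)=0$, so the Radon\,--\,Nikodym density $d\mu_{ac}/dm$ vanishes almost everywhere on $A$. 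Combining both, $\mu|_A=0$, whence $p$ (and therefore $G$) extends holomorphically across~$A$.

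The main obstacle is that the hypothesis offers only a radial statement at every point of~$A$, with no a~priori control on $\Im p$; the delicate cancellation $A\,\Re p-B\,\Im p=\Re\{\overline\sigma G\}$ must be broken using only the Herglotz growth bound $|p|=O\bigl(1/(1-|z|)\bigr)$, which just barely balances the $O(1-r)$ factor in~$B$. Once boundedness of $\Re p$ along radii is secured, the two Herglotz facts separately dispose of the singular and of the absolutely continuous parts of $\mu|_A$.
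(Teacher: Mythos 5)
Your argument is correct, and while the reduction is the same as the paper's, the measure-theoretic half follows a genuinely different route. Both proofs use the Berkson--Porta formula~\eqref{EQ_B-P-representation} to reduce the statement to showing that the Herglotz measure of~$p$ puts no mass on~$A$. The paper first upgrades the hypothesis to $\lim_{r\to1^-}\Re p(r\sigma)=0$ for \emph{every} $\sigma\in A$: by \cite[Lemma~3.3]{BCD-M_regular-poles} the limit of $(1-r)G(r\sigma)$ exists and is forced to vanish by the hypothesis, so the term $(1-r^2)\Im(\overline\tau\sigma)\Im p(r\sigma)$ is $o(1)$ rather than merely $O(1)$; it then expresses the Poisson integral through the distribution function of the measure (Hoffman's formula), uses an approximate identity to conclude that the lower symmetric density of the measure vanishes at \emph{every} point of~$A$, and kills the whole measure on~$A$ in one stroke via a density lemma of Evans--Gariepy. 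You settle for the weaker everywhere-boundedness of $\Re p(r\sigma)$ --- which your $O(1-r)\cdot O\bigl(1/(1-r)\bigr)$ estimate does deliver --- and then dispose of the singular and absolutely continuous parts of $\mu$ separately, using the blow-up of the Poisson integral at $\mu_s$-a.e.\ point for the former and Fatou's identification of the a.e.\ radial limit with the density for the latter. This is a sound alternative: it trades the paper's single covering-type lemma for the Lebesgue decomposition plus two standard boundary-behaviour theorems, and it does not need the full strength of $\Re p(r\sigma)\to0$ at every point.

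One point you should not leave implicit: when the Denjoy--Wolff point $\tau$ lies on~$A$, your remark ``splitting $A$ into two sub-arcs'' only yields extension across $A\setminus\{\tau\}$. The hypothesis is vacuous at $\sigma=\tau$, since $\Re\{\overline\tau\,G(r\tau)\}=-(1-r)^2\Re p(r\tau)\to0$ automatically from the growth estimate, so a priori $\mu$ could carry an atom at~$\tau$. To finish you must either observe that the factor $(z-\tau)$ in~\eqref{EQ_B-P-representation} cancels the pole of the corresponding Herglotz summand (an atom at $\tau$ contributes an entire function to~$G$), or argue as the paper does, via continuity of $G$ at~$\tau$ (Lemma~\ref{LM_Lind}) and Schwarz reflection applied to $z\mapsto G(z)/z$.
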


\begin{proof}
We first prove the lemma for the case when the Denjoy\,--\,Wolff point $\tau\not\in A$.

Fix any $\sigma\in A$.
First of all notice that
\begin{equation}\label{EQ_no-reg-poles}
\ell(\sigma):=\lim_{(0,1)\ni r\to 1}G(r\sigma)(1-r)=0.
\end{equation}
Indeed, by \cite[Lemma~3.3]{BCD-M_regular-poles} the limit in~\eqref{EQ_no-reg-poles} exists finitely and $\overline\sigma\ell(\sigma)\in\Real$, which contradicts the hypothesis unless $\ell(\sigma)=0$.

Write the Berkson\,--\,Porta formula $G(z)=q(z)p(z)$, where $q(z):=(\tau-z)(1-\overline\tau z)$ and $p$ is a Herglotz function. Note that $\overline{\sigma}\,q(\sigma)=-|\tau-\sigma|^2\in\Real$. Therefore, according to~\eqref{EQ_no-reg-poles}, $$\Big|\Re\{\overline{\sigma}\,G(r\sigma)\}+|\tau-\sigma|^2\Re p(r\sigma)\Big|\le\left|1+\frac{|\tau-\sigma|^2}{\overline{\sigma}\,q(r\sigma)}\right|\cdot|G(r\sigma)|\to 0$$ as $(0,1)\ni r\to 1$. It follows that the hypothesis of the lemma is equivalent to
\begin{equation}\label{EQ_cont-for-p}
\lim_{(0,1)\ni r\to1}\Re p(r\sigma)=0\quad \text{for all $\sigma\in A$}.
\end{equation}
Now write the Herglotz representation of~$\Re p$,
\begin{equation}\label{EQ_Herglotz}
\Re p(re^{i\theta})=\int_{[-\pi,\pi)}\mathcal P_r(\theta-t)\,d\nu(t) \quad \text{for all $r\in[0,1)$ and $\theta\in\Real$},
\end{equation}
where $\nu$ is a positive finite Borel measure on~$[-\pi,\pi)$ and $\mathcal P_r$ is the Poisson kernel.

By a technical reason, without loss of generality, we will assume that $$-\pi\not\in X:=\{\theta\in[-\pi,\pi):\,e^{i\theta}\in A\}.$$

Denote $F(t):=\nu\big([-\pi,t)\big)$ for $t\in(-\pi,\pi]$ and $F(-\pi):=0$. Then, see \cite[p.\,35]{Hoffman},
\begin{equation}\label{EQ_fromHerglotz-for-p}
\Re p(re^{i\theta})=\mathcal P_r(\theta+\pi)\|\nu\|+\frac1r\int_{-\pi}^{\pi}K_r(t)\frac{F(\theta+t)-F(\theta-t)}{2\sin t}\,dt,
\end{equation}
where $\|\nu\|:=\nu\big([-\pi,\pi)\big)$ and $K_r(t):=-(1/r)\mathcal P'_r(t)\sin t$ is an approximate identity for~$L^1$ (see, \textit{e.g.}, \cite[p.\,17]{Hoffman}).

From~\eqref{EQ_cont-for-p} and~\eqref{EQ_Herglotz} it follows that $\nu$ has no atoms on $X$. Since $X$ is open, it follows that  $F$ is continuous at every point of~$X$. Bearing this in mind and taking into account that $\mathcal P_r(\theta+\pi)\to 0$ as $(0,1)\ni r\to1$ if $|\theta|<\pi$, from \eqref{EQ_cont-for-p} and~\eqref{EQ_fromHerglotz-for-p} we deduce that $$\liminf_{t\to0+}\frac{F(\theta+t)-F(\theta-t)}{2\sin t}=\liminf_{t\to0+}\frac{\nu\big((\theta-t,\theta+t)\big)}{2t}=0$$ for all $\theta\in X$.  By  \cite[Lemma~1(i) on p.\,37]{Evans-G}, it follows that $\nu(X)=0$. Thus $p$ and hence $G$ extend holomorphically to~$A$. This proves the lemma for the case $\tau\not\in A$.

If $\tau\in A$, we apply the above argument for each of the two arcs forming $A\setminus\{\tau\}$. According to Lemma~\ref{LM_Lind}, $G$ is continuous at~$\tau$ and hence we can apply the Schwarz reflection principle to the function $z\mapsto G(z)/z$ to show that $G$ extends holomorphically to the whole arc~$A$.
\end{proof}

\section{Super-repulsive boundary fixed points}\label{super S}
Boundary regular fixed points of a one-parameter semigroup~$(\phi_t)$ are regular null-points of its generator~$G$ and \textit{vice versa}. For boundary super-repelling fixed points no such results hold. In~\cite[p.\,127]{CDP2} it was pointed out that $\anglim_{z\to x}G(z)=0$ does not imply that~$x$ is a boundary fixed point of~$(\phi_t)$. In fact, neither the converse implication  holds in general, as the following example shows:

\begin{example}
We are going to construct an example of a semigroup $(\phi_t)$ with associated infinitesimal generator $G$ such that $-1$ is a super-repelling boundary fixed point for $(\phi_t)$ but the radial limit of $G$ at $-1$ does not exist.

In order to construct such an example, we will first show that there exists a sequence $\delta_n\in(0,1)$ such that the conformal map $f_0$~of
$$%
\Omega_0:=\{w\colon|\Im w|<1\}\setminus\bigcup\limits_{n\in\Natural}\Big\{w\colon|\Im w|=\delta_n,\, \Re w\le -2n\Big\}
$$%
onto~$\UD$ with the normalization $f_0(0)=0$, $f_0'(0)>0$, satisfies
\begin{align}
\label{EQ_example-liminf}&\liminf_{\Real\ni u\to -\infty}f_0'(u)=0\quad\text{and}\\
\label{EQ_example-limsup}&\limsup_{\Real\ni u\to -\infty}f_0'(u)=+\infty.
\end{align}
The proof of the above statement is based on the following more technical claim.
For $a>0$ and $n\in\Natural$, denote $S(a,n):=\{w\colon |\Im w|<a, \Re w>-2n-1\}$. Furthermore, for a simply connected domain $\Omega\ni0$ let $f_\Omega$ stand for the conformal mapping of $\Omega$ onto~$\UD$
such that $f_\Omega(0)=0$, $f_\Omega'(0)>0$.

\vskip2mm\noindent{\textit{Claim}.} Fix any $a>0$, any $n\in\Natural$, and any $M>0$. Then there exists $\delta=\delta(a,n,M)\in(0,a)$ such that if $\Omega\subset\Complex$ is a simply connected domain satisfying the following two conditions:
\begin{mylist}
\item[(i)] $\Omega$ is symmetric w.r.t. the real axis;

\item[(ii)] $\Omega\cap S(a,n)=S(a,n)\setminus\Big([-2n-1+i\delta,-2n+i\delta]\cup[-2n-1-i\delta,-2n-i\delta]\Big)$,
\end{mylist}
then there exists a point $u_n\in(-2n,-2n+1)$ such that $f_\Omega'(u_n)\ge M$.

\vskip2mm\noindent{\textit{Proof of the Claim.}}
First of all note that by (i),
$f_\Omega(\Real\cap\Omega)=(-1,1)$ and $f'_\Omega(u)>0$ for all $u\in\Real\cap\Omega$.

Now suppose on the contrary that for any
$\delta\in(0,a)$ there exists a simply connected domain $\Omega(\delta)$ satisfying conditions (i) and (ii)  such that
\begin{equation}\label{EQ_example_hypothesis}
f'_{\Omega(\delta)}(u)<M\quad\text{ for all
$u\in(-2n,-2n+1)$ and all $\delta\in(0,a)$.}
\end{equation}
The family $\mathcal
F:=\big(f^{-1}_{\Omega(\delta)}\big)_{\delta\in(0,a)}$ is normal in~$\UD$.
Hence there exists a sequence $(\delta_n)\subset(0,a)$ converging to zero such
that $h_n:=f^{-1}_{\Omega(\delta_n)}$ converges locally uniformly in~$\UD$ to a
holomorphic function~$h_*:\UD\to\ComplexE$. With (ii) taken into account, by the
Carath\'eodory kernel convergence theorem, $h_*$ maps $\UD$ conformally onto a
domain~$\Omega_*$ such that $\Omega_*\cap S(a,n)=S(a,n)\setminus[-2n-1,-2n]$. It follows that
\begin{equation}\label{EQ_contr}
(h_*^{-1})'(u)\to+\infty\qquad \text{as~~~~} (-2n,-2n+1)\ni u\to-2n.
\end{equation}
Again by the
Carath\'eodory kernel convergence theorem, the sequence $h_n^{-1}=f_{\Omega(\delta_n)}$
converges to $h_*^{-1}$ locally uniformly in $\{w\colon |\Im w|<a,\,\Re w>-2n\}$.
Then by~\eqref{EQ_example_hypothesis}, it follows that ${(h_*^{-1})'(u)\le M}$ for all $u\in (-2n,-2n+1)$, which contradicts~\eqref{EQ_contr}. The claim is proved.

\vskip2mm
Take any sequence $(M_n>0)$ tending to~$+\infty$. Now using the above claim define the sequence $(\delta_n)$ recurrently by $\delta_0:=1$ and $\delta_n:=\delta(\delta_{n-1},n,M_n)$ for all~$n\in\Natural$.

Since for each $n\in\Natural$ the domain $\Omega_0$ satisfies conditions~(i) and (ii) with $a:=\delta_n$, it follows that there is a sequence~$(u_n)\subset\Real$ tending to~$-\infty$ such that $f_0'(u_n)\ge M_n$. This proves~\eqref{EQ_example-limsup}.

Notice also that \eqref{EQ_example-liminf} follows from the fact that $f_0(0)-f_0(w)=\int_{w}^0f_0'(u)\,du$ for all $w<0$ and $f_0(w)\to-1$ as $\Real\ni w\to-\infty$.

Now consider the one-parameter semigroup defined by $\phi_t(z):=f_0\big(f_0^{-1}(z)+t\big)$ for all $z\in\UD$ and all~$t\ge0$. The radial limit of the K\oe nigs function $h:=f_0^{-1}$ of~$(\phi_t)$ at $-1$ equals~$\infty$. Consequently, by Proposition~\ref{PR_Pavel}(iii), $-1$ is a boundary fixed point of~$(\phi_t)$.  At the same time, by the above argument, the infinitesimal generator of~$(\phi_t)$ given by $G=f_0'\circ f_0^{-1}$ has no radial limit at~$-1$.
This gives the desired example.
\end{example}

In contrast to the above example, we prove the following proposition, which gives a sufficient condition for an infinitesimal generator to have vanishing angular limit at a super-repelling fixed point of the associated semigroup:

\begin{proposition}\label{PR_superrepulsive}
Let $(\phi_t)$ be a one-parameter semigroup with the associated infinitesimal generator $G$. If $x_0\in\UC$ is a super-repulsive boundary fixed point of ~$(\phi_t)$ and if there exists no backward orbit of~$(\phi_t)$ landing at~$x_0$, then
$$
\anglim_{z\to x_0} G(z)=0.
$$
\end{proposition}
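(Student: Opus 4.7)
The approach is the one announced in the introduction: establish that $x_0$ is the initial point of a (necessarily maximal) contact arc $A\subset\UC$ for~$G$. Granting this, Proposition~\ref{PR_contact_arcs}(a1)(a2) applied to~$A$ yields, since $x_0$ is a boundary fixed point of~$(\phi_t)$, that $\lim_{A\ni y\to x_0}G(y)=0$; Lemma~\ref{LM_Lind}(A) applied with $f:=G\in\Gen$ then promotes this to the desired angular limit $\anglim_{z\to x_0}G(z)=0$.

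To produce the contact arc the plan is to pass to the K\oe nigs coordinate. Reducing as in Step~1 of the proof of Theorem~\ref{TH_contact-points} to the case $\tau=1$, Proposition~\ref{PR_Koenigs-function}(B) supplies a univalent $h\colon\UD\to\C$ satisfying $h\circ\phi_t=h+t$; set $\Omega:=h(\UD)$. The Abel equation forces $\Omega+t\subseteq\Omega$ for all $t\ge0$, equivalently, $\C\setminus\Omega$ is invariant under translation by non-positive reals. Since $x_0\neq\tau$ is a boundary fixed point, Proposition~\ref{PR_Pavel}(iii) gives $h(x_0)=\infty$, so $x_0$ corresponds to a prime end of~$\Omega$ at infinity. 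A backward orbit of~$(\phi_t)$ landing at~$x_0$ is exactly the $h^{-1}$-lift of a horizontal leftward ray $\{w_0-s:s\ge 0\}\subset\Omega$ whose lift tends to~$x_0$ as $s\to+\infty$; the hypothesis thus asserts that no such ray lifts to a curve landing at~$x_0$.

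The main geometric step will be to deduce that $\partial\Omega$ contains a horizontal half-line $L:=\{w_*+s:s\le 0\}$ one side of which is accessible from a crosscut neighborhood of the prime end at~$x_0$, and such that the corresponding boundary extension of~$h^{-1}$ lands on an open arc $A\subset\UC$ having~$x_0$ as an end-point. The intended argument is prime-end-theoretic: fix a right-invariant Carath\'eodory crosscut neighborhood $V\subset\Omega$ of the prime end at~$x_0$ (possible because $\Omega+t\subseteq\Omega$), and for each $w\in V$ define $T(w):=\sup\{t\ge 0: w-s\in\Omega\text{ for all }s\in[0,t]\}$. Combining the non-landing hypothesis, the left-invariance of~$\C\setminus\Omega$, and a prime-end analysis of how backward orbits land on~$\UC$, one expects $T(w)<+\infty$ for every $w\in V$ and all exit points $w-T(w)$ to lie on a common horizontal line $L\subset\partial\Omega$ extending to~$-\infty$. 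Given~$L$, Schwarz reflection across~$L$ applied to~$h$---as in Step~1 of the proof of Theorem~\ref{TH_contact-points}---yields the holomorphic extension of~$h$, and therefore of $G=1/h'$, through an open arc $A\subset\UC$ ending at~$x_0$; horizontality of~$L$ makes~$G$ tangent to~$\UC$ along~$A$ with no zeros, so~$A$ is the required contact arc initiating at~$x_0$.

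The main obstacle is precisely this prime-end step---both producing the horizontal half-line~$L$ in $\partial\Omega$ and verifying that the arc $A$ furnished by the reflection really has~$x_0$ as an actual end-point rather than only accumulating there. The heuristic is transparent: failure of horizontal leftward rays from~$V$ to reach~$\infty$ inside~$\Omega$ creates a ``left frontier'' of the channel corresponding to~$x_0$ which, by the left-invariance of~$\C\setminus\Omega$, must sit inside a horizontal half-line of~$\partial\Omega$ going to~$-\infty$; making this rigorous requires a careful choice of~$V$ and a prime-end characterization of backward-orbit landings at~$x_0$.
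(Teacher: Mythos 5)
Your overall skeleton is exactly the paper's: reduce the statement to showing that $x_0$ is the initial point of a maximal contact arc, and then conclude from Proposition~\ref{PR_contact_arcs}(a1)--(a2) together with Lemma~\ref{LM_Lind}(A). That reduction is fine. The problem is that the entire mathematical content of the proposition lives in the step you yourself flag as ``the main obstacle'', and your sketch of how to fill it is not only incomplete but points in a direction that would fail. Defining $T(w):=\sup\{t\ge0:\,w-s\in\Omega\ \text{for all } s\in[0,t]\}$ on a crosscut neighborhood $V$ and hoping that all exit points $w-T(w)$ lie on a \emph{common} horizontal line is too strong: left-invariance of $\Complex\setminus\Omega$ only gives you, for each $w$, a leftward ray in the complement through $w-T(w)$, and nothing prevents these rays from sitting at infinitely many different heights (comb-like domains). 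No such common line exists in general, so this cannot be the route to the half-line $L$.

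What the paper actually does is more concrete and avoids any prime-end analysis of $V$. Normalizing $\tau=1$, $x_0=-1$, one uses the single curve $r\mapsto h(-r)$: by Proposition~\ref{PR_Pavel}(iii) its imaginary part converges to some finite $v_*$, while $\Re h(-r)$ is strictly decreasing to $-\infty$ (from $h'G=1$ and Berkson--Porta). A short translation-invariance argument shows that every vertical segment from $h(-r_0)$ up/down to height $v_*$ lies in $h(\UD)$. Then there is a dichotomy: either some leftward ray $R=\{u+iv_*:u\le u_0\}$ misses $h(\UD)$, or else the whole horizontal line at height $v_*$ lies in $h(\UD)$ and its preimage is a backward orbit which lands at $-1$ because $R$ and $h((-1,0])$ are equivalent slits --- contradicting the hypothesis. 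In the first case $R$, the curve $h((-1,0])\cap\{\Re w\le u_0\}$ and a vertical segment bound a Jordan domain $D\subset h(\UD)$, and Schwarz reflection of $h^{-1}$ across $R$ produces the contact arc; the fact that $-1$ is a genuine end-point of that arc (your second worry) is settled by the Carath\'eodory extension theorem applied to the reflected map on the Jordan domain $D\cup R\cup D^*$, since $R$ and $h((-1,0])$ approach the same boundary prime end at $\infty$. Without this construction --- in particular without identifying the correct height $v_*$ via the image of the radius, which is where the ``no backward orbit'' hypothesis is actually consumed --- your argument does not close.
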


The proof of Proposition \ref{PR_superrepulsive} follows at once from Lemma~\ref{LM_Lind}, Proposition~\ref{PR_contact_arcs}(a2) and the following lemma:

\begin{lemma}\label{LM_no-backward-orbit-means-contact-arc}
Under the hypotheses of Proposition~\ref{PR_superrepulsive}, there exists a maximal contact arc~$A_0$ for which $x_0$ is the initial point.
\end{lemma}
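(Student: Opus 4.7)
The plan is to translate the statement into geometry via the K\oe nigs function. Normalise $\tau = 1 \in \partial\D$ using \cite[Proposition 2.1]{Pavel}, let $h$ be the K\oe nigs function, and set $\Omega := h(\D)$. The Abel identity $h\circ\phi_t = h+t$ makes $\Omega$ starlike at $+\infty$ and identifies backward orbits of $(\phi_t)$ with horizontal leftward rays in $\Omega$: a backward orbit starting at $z_0 \in \D$ exists for all times and lands at $x_0$ exactly when $\{h(z_0) - t : t \ge 0\} \subset \Omega$ and $h^{-1}(h(z_0) - t) \to x_0$ as $t \to +\infty$. Since $x_0$ is a boundary fixed point, $h$ has angular limit $\infty$ at $x_0$ (Proposition~\ref{PR_Pavel}(iii)), so $x_0$ corresponds to a prime end $p := \tilde h(x_0)$ of $\Omega$ at infinity, distinct from the Denjoy--Wolff prime end. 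The hypothesis reads: no horizontal leftward ray in $\Omega$ lands at $p$.

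The central geometric step is to exhibit a horizontal ray $L = \{u + ic : u \le u_*\} \subset \partial\Omega$ whose $h^{-1}$-image in $\partial\D$ is an arc with $x_0$ as its $(-\infty)$-endpoint. First, I would show that for any sequence $w_n \to p$ in $\Omega$ the imaginary parts $\Im w_n$ are bounded: otherwise, if $\Im w_n \to +\infty$ say, then for $N$ large enough the rays $\{w_n + t : t \ge 0\}$ lie in $\Omega \cap \{|w|>N\}$ and stay in its connected component containing $w_n$ (they start there and do not leave $\{|w|>N\}$), yet their tails are attracted by the Denjoy--Wolff direction, forcing $p = \tau$, a contradiction. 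Hence the cluster heights of approaches to $p$ form a bounded subset $C_p$ of $\Real$. The hypothesis rules out $C_p$ containing an open interval---such an interval would produce a full strip in $\Omega$ near $p$ and hence backward orbits landing at $p$---so $C_p$ is thin and (in the "single-tentacle" case) reduces to a single value $c$. A purely asymptotic pinching of the tentacle to height $c$ would, by the non-decreasing structure of the vertical slices enforced by starlikeness, leave the entire line $\{\Im w = c\}$ inside $\Omega$ for $\Re w$ sufficiently negative, yielding yet another backward orbit landing at $p$. Hence $\partial\Omega$ must contain the horizontal ray $L$ bounding the $p$-tentacle from above or below.

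Once $L$ is identified, the remaining verification is routine. Schwarz reflection through the line $\{\Im w = c\}$ extends $h$ holomorphically across the arc $A_0 := h^{-1}(L) \subset \partial\D$; consequently $G = 1/h'$ extends. The identity $\Im h \equiv c$ on $A_0$ yields the tangency $\Re\{\overline z\, G(z)\} = 0$ upon differentiating along the arc, and $G$ is non-vanishing on $A_0$ because $h'$ is (conformality). The $G$-orientation on $A_0$ matches the direction of increasing $\Re h$ (since $G\cdot h' = 1$), so the initial point of $A_0$ is the $h^{-1}$-limit of $L$ as $u\to-\infty$, which is $x_0$. Maximality of $A_0$ is immediate: a strictly larger contact arc containing $A_0$ would extend $L$ further to the left within $\partial\Omega$, which is impossible.

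The main obstacle is making the geometric argument rigorous in full generality, particularly: (a) handling tentacles whose vertical slices near $p$ consist of several components (e.g.\ in the accumulating-slits picture) by restricting attention to the correct connected component of $\Omega \cap \{\Re w < -R\}$; and (b) treating thin but non-singleton cluster sets $C_p$. These technicalities should be addressed by working with the $p$-component of $\Omega \cap \{|w|>N\}$ for large $N$, exploiting the leftward-starlike structure of the complement components of $\Omega$, and extracting $L$ via a compactness argument applied to the left endpoints of maximal horizontal leftward segments contained in this $p$-component.
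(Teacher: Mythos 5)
Your overall strategy is the same as the paper's: pass to $\Omega:=h(\UD)$ via the K\oe nigs function, exploit the forward-translation invariance $\Omega+t\subset\Omega$, produce a horizontal ray $L\subset\partial\Omega$ at the limiting height of the approach to $x_0$, and obtain the contact arc by Schwarz reflection across $L$. The concluding part of your argument (reflection, tangency from $\Im h\equiv c$ on $L$, non-vanishing of $G=1/h'$, orientation, identification of the initial point, passage to a maximal arc) matches the paper and is sound, apart from the claim that maximality is ``immediate'' --- the correct reason a larger arc cannot extend past $x_0$ is that a contact arc cannot contain a boundary fixed point (else $h$ would be finite there). The genuine gap is in the central step, and you flag it yourself: the reduction to a ``single-tentacle'' case, the assertion that a thin cluster set $C_p$ of heights is a singleton, and the ``pinching'' claim that an asymptotically narrowing tentacle forces the whole line $\{\Im w=c\}$ into $\Omega$ are all unproved, and the multi-component and non-singleton configurations are explicitly deferred as ``technicalities to be addressed''. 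In addition, in the alternative branch you only produce \emph{some} backward orbit; for the contradiction you must show it lands at $x_0$, which requires an argument (the paper joins the ray to the radial image curve by shrinking vertical segments and invokes equivalence of slits in the sense of Goluzin).

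The paper closes exactly this gap by anchoring everything to the single curve $h((-1,0])$ rather than to the full cluster set of the prime end: $\Im h(-r)\to v_*$ finitely by Proposition~\ref{PR_Pavel}(iii) and the geometry of K\oe nigs domains, while $h'G=1$ together with the Berkson--Porta formula gives $\tfrac{d}{dr}\Re h(-r)<0$, so $\Re h(-r)$ decreases to $-\infty$. Translation invariance then yields the Claim that the vertical segment from $h(-r_0)$ to $\Re h(-r_0)+iv_*$ lies in $\Omega$, and forces a clean dichotomy at the one height $v_*$: either the leftward horizontal ray at height $v_*$ meets $\Omega$ at arbitrarily negative real parts --- in which case the entire leftward ray lies in $\Omega$ and, being a slit equivalent to $h((-1,0])$, is a backward orbit landing at $x_0$, contradicting the hypothesis --- or it is eventually contained in $\Complex\setminus\Omega$ and hence, being approached by the vertical segments of the Claim, lies in $\partial\Omega$; this is the ray $L$ you need. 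Replacing your cluster-set taxonomy by this radial-curve dichotomy (and supplying the landing argument in the first alternative) turns your outline into a complete proof.
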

\begin{proof}
First of all, we may assume that the DW-point of~$(\phi_t)$ is~$\tau=1$  thanks to~\cite[Proposition~2.1]{Pavel}, and also that $x_0=-1$.

Let $h$  the K\oe nigs function of~$(\phi_t)$.  By Proposition~\ref{PR_Pavel}(iii),
\begin{equation}\label{EQ_lim-Im-h}
\Im h(-r)\to v_*\quad\text{as~~~~~$[0,1)\ni r\to1$}
\end{equation}
for some $v_*\in\Real$. Now we prove the following claim. \vskip2mm

\noindent\textit{Claim}. For each $r_0\in[0,1)$, $\big[h(-r_0),iv_*+\Re h(-r_0)\big)\subset h(\UD)$.\\[2mm]
Indeed, by Proposition~\ref{PR_Koenigs-function}(B).(ii) and \eqref{EQ_B-P-representation} we have
\begin{equation}\label{EQ_decreasing}
\frac d{dr}\Re h(-r)<0 \qquad\text{for all $r\in [0,1)$.}
\end{equation}
Therefore, from~\eqref{EQ_lim-Im-h} it follows that for any $v_0$ between $v_*$ and $\Im h(-r_0)$ the ray $\{u+iv_0:u\le \Re h(-r_0)\}$ intersect the line $h((-1,0])\subset h(\UD)$, which in view of the translational invariance of~$h(\UD)$ implies our claim.\vskip2mm

Now we show that there is $u_0\in\Real$ such that $R:=\{u+iv_*:u\le u_0\}$ does not intersect~$h(\UD)$. Assume that this is not true.  Then, for some  $u_0\in\Real$, we have $R\subset h(\UD)$ and $\gamma:=h^{-1}(R)$ is a backward orbit of $(\phi_t)$. From the above claim it follows that $R$ and $h((-1,0])$ are equivalent slits in~$h(\UD)$ and hence, see, \textit{e.g.}, \cite[p.\,35\,--\,39]{Goluzin}, $\gamma$ lands at~$-1$, which contradicts the hypothesis.

Now from the claim and from~\eqref{EQ_decreasing} we conclude that the ray $R$ together with the curve $\Gamma:=h((-1,0])\cap\{w\colon\Re w\le u_0\}$ and the part of the line $\{w\colon\Re w=u_0\}$ between $R$ and $\Gamma$ bound a Jordan domain~$D$, which lies in $h(\UD)$. It follows with the help of the Schwarz reflection principle that $h^{-1}$ extends from $D$ to a holomorphic function $f$ on the Jordan domain $D_1:=D\cup R\cup D^*$, where $D^*$ is the reflection of~$D$ w.r.t.~$R$, and that $A:=f(R)$ is a contact arc for~$G$. According to the Carath\'eodory extension theorem (see, \textit{e.g.}, \cite[Thm.\,2.6 on p.\,24]{Pommerenke2}) applied to $f$ in~$D_1$, $$\lim_{R\ni w\to \infty}f(w)=\lim_{w\ni\Gamma\to\infty}f(w)=-1,$$
which means that $x_0=-1$ is the initial point of $A$.

It remains to notice that there exists a unique maximal contact arc $A_0$ containing $A$ and that $-1$ is still the initial point of~$A_0$, because a contact arc cannot contain any boundary fixed points.
\end{proof}

\section{Fractional singularities}\label{S_fractional}

Let $G$ be an infinitesimal generator in $\D$ and let $x\in \de \D$. One can define
\begin{equation*}
\begin{split}
\al_x^+(G)&:=\inf\{\al \in \R: \lim_{r\to 1^-}\frac{|G(rx)|}{(1-r)^\al}=+\infty\}=\sup\{\al\in\R: \liminf_{r\to 1^-}\frac{|G(rx)|}{(1-r)^\al}=0\}\\
\al_x^-(G)&:=\sup\{\al \in \R: \lim_{r\to 1^-}\frac{|G(rx)|}{(1-r)^\al}=0\}=\inf\{\al\in\R: \limsup_{r\to 1^-}\frac{|G(rx)|}{(1-r)^\al}=+\infty\}.
\end{split}
\end{equation*}

\begin{remark}
Let $(z_k)$ be a sequence in $\D$ converging to $x\in \de \D$ non-tangentially and let $p\in\Hol(\D,\C)$ with $\Re p>0$. Then there exists a sequence $(r_k)\subset (0,1)$ such that the hyperbolic distance in $\D$ between $z_k$ and $r_kx$ is less than a constant independent of~$k$. Since $p$, as a map from~$\UD$ to the right half-plane, does not increase the hyperbolic distance, it is not hard to see that $\{\log(|p(z_k)|/|p(r_k x)|)\}_{k\in \N}$ is bounded. From this, with the help of Berkson\,--\,Porta formula~\eqref{EQ_B-P-representation}, one can easily see that it is possible to replace the radial limits in the definition of~$\al_x^\pm (G)$ with the corresponding non-tangential limits.
\end{remark}

The lemma below follows easily from the Berkson\,--\,Porta formula and the growth estimates for holomorphic functions with positive real part (see \cite[eq. (11) p.\,40]{Pommerenke}).

\begin{lemma}\label{LM_alpha_pm}
Let $G$ be an infinitesimal generator in $\D$ and let $x\in \de\D$. If $x$ is not the Denjoy\,--\,Wolff point of the semigroup generated by $G$, then $-1\leq \al_x^-(G)\leq \al_x^+(G)\leq 1$.
\end{lemma}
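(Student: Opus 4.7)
The plan is to apply the Berkson\,--\,Porta representation $G(z)=(z-\tau)(\overline{\tau}z-1)p(z)$, with $p\in\Hol(\D,\C)$ satisfying $\Re p\ge 0$, and then pin down the growth rate of $|G(rx)|$ via the classical two-sided bounds for Herglotz functions. First I would check that the pre-factor $\psi(z):=(z-\tau)(\overline{\tau}z-1)$ is continuous on~$\oD$ and satisfies $|\psi(x)|=|x-\tau|^2>0$, a direct computation valid whether $\tau\in\D$ or $\tau\in\de\D$. By continuity, $|\psi(rx)|$ is then bounded above and below by positive constants as $r\to 1^-$, so the whole question reduces to controlling $|p(rx)|$.

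Under the standing assumption that $\phi_t\not\in\Aut$ for all $t>0$, the function $p$ cannot be a purely imaginary constant (otherwise the semigroup would consist of M\"obius automorphisms), hence $\Re p\not\equiv 0$ and the maximum principle for the harmonic function $\Re p$ forces $\Re p>0$ throughout~$\D$. Consequently $1/p$ also maps into the right half-plane (via the identity $\Re(1/w)=\Re w/|w|^2$) and is itself a Herglotz function. Applying the standard growth estimate \cite[eq.\,(11) p.\,40]{Pommerenke} to each of $p$ and~$1/p$, and combining with the two-sided bound on~$|\psi|$, produces
\[
c(1-r)\le |G(rx)|\le \frac{C}{1-r}
\]
for constants $c,C>0$ and all $r$ sufficiently close to~$1$.

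From these two bounds the three inequalities drop out mechanically. The upper bound gives $|G(rx)|/(1-r)^\al\le C(1-r)^{-1-\al}\to 0$ for every $\al<-1$, hence $\al_x^-(G)\ge -1$. The lower bound gives $|G(rx)|/(1-r)^\al\ge c(1-r)^{1-\al}\to +\infty$ for every $\al>1$, whence the $\liminf$ is non-zero and $\al_x^+(G)\le 1$. The middle inequality $\al_x^-(G)\le\al_x^+(G)$ is immediate from the definitions, since convergence of the full limit to~$0$ forces the $\liminf$ to~$0$. I do not anticipate any serious obstacle; the only slightly subtle ingredient is the reduction to $\Re p>0$, which follows from the standing non-automorphism assumption combined with the maximum principle.
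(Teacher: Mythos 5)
Your proposal is correct and follows exactly the route the paper indicates (the paper only sketches this lemma as "follows easily from the Berkson--Porta formula and the growth estimates for holomorphic functions with positive real part"): factor out $(z-\tau)(\overline\tau z-1)$, which is bounded away from $0$ and $\infty$ along $[0,x)$ since $x\neq\tau$, and apply the two-sided Herglotz growth estimates to $p$ (your observation that $1/p$ is again Herglotz is a clean way to get the lower bound, and your identity $|\psi(x)|=|x-\tau|^2$ does hold for all $\tau\in\oD$ because $\overline\tau x-1=x\,\overline{(\tau-x)}$ when $|x|=1$). No gaps.
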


The examples given below show that, in general, $\al_x^-(G)$ and $\al_x^+(G)$ do not coincide and even if $\al_x^-(G)=\al_x^+(G)=:\alpha$, it is not possible to say anything about $\lim_{r\to 1^-}\frac{|G(rx)|}{(1-r)^{\al}}$.

\begin{example}\label{EX_alpha-plus-minus1}
Using Berkson\,--\,Porta formula~\eqref{EQ_B-P-representation}, consider an infinitesimal generator $G_*(z):=(\tau-z)(1-\overline\tau z)p_*(z)$ for all $z\in\UD$, where $\tau\in\overline\UD\setminus\{1\}$, ${p_*(z):=\tfrac{1+B(z)}{1-B(z)}}$ for all $z\in\UD$ and $B$ is an infinite Blaschke product $B(z):=\prod_{n=1}^{+\infty}\frac{z-x_n}{1-x_n z}$ with all the zeros ${x_n\in(0,1)}$. We claim that $\alpha_1^+(G_*)=1$ and $\alpha_1^-(G_*)=-1$, provided ${\tfrac{a_{n+1}}{a_n}\to+\infty}$ as ${n\to+\infty}$, where, for all $n\in\Natural$, $a_n:=2k_\UD(0,x_n)=\log\tfrac{1+x_n}{1-x_n}$ is (the double of) the hyperbolic distance between $0$ and~$x_n$. Taking into account Lemma~\ref{LM_alpha_pm}, it suffices to show that $(-1)^{j+1}\tfrac{\log |p(z_j)|}{\log|1-z_j|}\to 1$ as ${j\to+\infty}$ for some sequence $(z_j)\subset(0,1)$ converging to~$1$. For each ${j\in\Natural}$, define now ${z_j\in(x_j,x_{j+1})}$ by requiring ${k_\UD(x_j,z_j)=k_\UD(z_j,x_{j+1})}$. Then $(-1)^j B(z_j)\to1$ as ${j\to+\infty}$. Indeed, since $a_{j}-a_{j-1}>2\log(j-1)$ for all $j\in\Natural$ large enough,
\begin{multline*}
\Big|\log|B(z_j)|\Big|~\le~ \sum_{n=1}^{+\infty}\frac{2}{e^{2k_\UD(z_j,x_n)}-1}
~=~\sum_{n=1}^{j-1}\frac{2}{e^{-a_n+(a_{j+1}+a_j)/2}-1}\\~+~\frac{4}{e^{(a_{j+1}-a_j)/2}-1}~+~ \sum_{n=j+2}^{+\infty}\frac{2}{e^{a_n-(a_{j+1}+a_j)/2}-1}~~\to~~0 \quad \text{as~~~$j\to+\infty$.}
\end{multline*}
It follows that
\begin{multline*}
\lim_{j\to+\infty}(-1)^{j+1}\frac{\log |p(z_j)|}{\log|1-z_j|}= \lim_{j\to+\infty}\frac{\log\,\sum_{n=1}^{+\infty} 2e^{-2k_\UD(z_j,x_n)}}{{\log|1-z_j|}}\\ \qquad\qquad\qquad\qquad\qquad\qquad\qquad\qquad\qquad\qquad\quad
=-\lim_{j\to+\infty} \frac{\log\,\sum_{n=1}^{+\infty}e^{-|a_n-(a_j+a_{j+1})/2|}}{(a_j+a_{j+1})/2}\\ =1~-~ \lim_{j\to+\infty}\frac{a_j~+~\log\left(2+\sum_{n=1}^{j-1}e^{a_n-a_j}+\sum_{n=j+2}^{+\infty}e^{a_{j+1}-a_n}\right)} {{(a_j+a_{j+1})/2}}~=~1,
\end{multline*}
and our claim is proved.
\end{example}
\begin{example}
It is now easy to construct an example of an infinitesimal generator~$G$ with any prescribed values $\alpha^+$ and $\alpha^-$ of $\alpha_1^+(G)$ and $\alpha_1^-(G)$, respectively, satisfying the inequality in Lemma~\ref{LM_alpha_pm}. Indeed, with the branch of the power function properly chosen, $$G(z):=(\tau-z)(1-\overline\tau z)\big(p_*(z)\big)^{(\alpha^+-\alpha^-)/2}\left(\frac{1-z}{1+z}\right)^{(\alpha^++\alpha^-)/2}\quad \text{for all~$z\in\UD$,}$$  where $\tau\in\overline\UD\setminus\{1\}$ and $p_*$ is constructed in the previous example, is an infinitesimal generator with the desired property.
\end{example}
\begin{example}
Fix $\alpha\in(-1,1)$ and $\tau\in\overline\UD\setminus\{1\}$. Let
$$G(z):=(\tau-z)(1-\overline\tau z)\left(\frac{1-z}{1+z}\right)^\alpha\big(p_*(F(z))\big)^{1-|\alpha|}\quad\text{for all $z\in\UD$},$$ where $H(z):=(1+z)/(1-z)$, $F(z):=H^{-1}(1+\log H(z))$ for all $z\in\UD$, and $p_*$ is defined in Example~\ref{EX_alpha-plus-minus1}. Then $G$ is an infinitesimal generator, $\al_1^+(G)=\al_1^-(G)=\alpha$, but $$\limsup_{r\to1^-}\frac{|G(r)|}{(1-r)^\al}=+\infty\quad\text{and}\quad
\liminf_{r\to1^-}\frac{|G(r)|}{(1-r)^\al}=0.$$
\end{example}

Many other similar ``pathological'' examples can be found. In this paper we consider ``regular'' fractional singularities defined as follows:

\begin{definition}\label{DF_regular-singularity}
Let $\alpha\in\R\setminus\{0\}$. An infinitesimal generator~$G$ is said to have a \textsl{regular singularity of order~$\alpha$} at a point~$x\in\UC$ if the angular limit
\begin{equation}\label{EQ_limit-in-def-reg-sing}
M_\al(x):=\lim_{r\to 1^-}\frac{G(rx)}{(1-r)^\alpha}
\end{equation}
exists and belongs to~$\Complex^*:=\Complex\setminus\{0\}$.
\end{definition}

\begin{lemma}\label{LM_ang_lim_at_frac_sing}
Let $G$ be an infinitesimal generator. If $G$ has a regular singularity of order~$\alpha$ at $x\in\de \D$ then
\begin{equation}\label{EQ_ang_lim_at_frac_sing}
\anglim_{z\to x}\frac{G(z)}{(1-\overline{x}z)^\alpha}=M_\al(x).
\end{equation}
\end{lemma}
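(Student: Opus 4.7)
My plan is to upgrade the given radial limit into an angular limit by applying the Lindel\"of theorem in its generalized form for normal meromorphic functions, which requires first establishing that the function
\[
F(z):=\frac{G(z)}{(1-\overline{x}z)^\alpha},\qquad z\in\UD,
\]
defined with a fixed branch of $(1-\overline{x}z)^\alpha$ on the simply connected domain $\UD$ (where $1-\overline{x}z\neq 0$), is itself a normal function.

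First I would apply the Berkson\,--\,Porta formula~\eqref{EQ_B-P-representation} to write $G(z)=q(z)p(z)$, where $q(z):=(z-\tau)(\overline{\tau}z-1)$ is a polynomial bounded on $\UD$ and $p\in\Hol(\UD,\Complex)$ has $\Re p\ge0$. Since the image $p(\UD)$ is contained in the closed right half-plane, $p$ omits infinitely many values in~$\Complex$, so by the standard criteria cited in the remark after Theorem~\ref{TH_Lind} (cf.~\cite[Lemmas~9.2~and~9.3]{Pommerenke}), $p$ is a normal holomorphic function on~$\UD$. Likewise, within the range $|\alpha|\le 1$ guaranteed by Lemma~\ref{LM_alpha_pm}, the chosen branch of $(1-\overline{x}z)^{-\alpha}$ maps $\UD$ into a proper open sector in $\Complex$ with vertex at the origin, and so is also a normal function on~$\UD$.

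Next I would combine these observations to prove that $F=q\cdot p\cdot (1-\overline{x}z)^{-\alpha}$ is itself normal on $\UD$. This is the main obstacle, since the product of normal functions need not be normal: the argument has to exploit the specific geometry of the factors (the images of $p$ and of $(1-\overline{x}z)^{-\alpha}$ lie in proper simply connected subdomains of~$\Complex$, and $q$ is a bounded polynomial) in order to derive a uniform Marty-type bound $(1-|z|^2)F^\#(z)\le C$ on~$\UD$. I would expect that specializing the resulting argument to the case $\alpha=0$ yields normality of every $G\in\Gen$, as announced in the remark following Theorem~\ref{TH_Lind}.

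Finally, once $F$ is known to be normal, the classical Lindel\"of theorem for normal meromorphic functions applied along the radius $[0,1)\ni r\mapsto rx$ transfers the hypothesis $\lim_{r\to 1^-}F(rx)=M_\al(x)$ into the angular-limit statement $\anglim_{z\to x}F(z)=M_\al(x)$, which is exactly~\eqref{EQ_ang_lim_at_frac_sing}.
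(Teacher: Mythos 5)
You have correctly identified the intended mechanism (upgrade the radial limit to an angular one via a Lindel\"of-type theorem) and you have also correctly identified the obstruction: the product of normal functions need not be normal. But your proposal stops exactly there. The entire content of the lemma is the uniform Marty-type bound for $F(z)=G(z)(1-\overline{x}z)^{-\alpha}$, and your text only asserts that ``the argument has to exploit the specific geometry of the factors'' and that you ``would expect'' it to work; no such argument is given. As written, this is a genuine gap, not a proof. There is also a secondary issue with the plan itself: even if you prove that $p$ and $(1-\overline{x}z)^{-\alpha}$ are normal, multiplying by the Berkson--Porta factor $q(z)=(z-\tau)(\overline{\tau}z-1)$ is not innocuous either (a bounded holomorphic factor can destroy normality in general), so trying to establish normality of the full product $F$ head-on is the wrong decomposition of the problem.

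The paper's proof shows how to fill the gap, and it is worth comparing. One first peels off the polynomial $q$: since $q$ is continuous on $\overline{\UD}$, its angular limit at $x$ is trivial (and when $x=\tau\in\de\D$ one absorbs $q(z)=\overline\tau(z-\tau)^2$ into the power of $1-\overline{x}z$), so the problem reduces to the zero-free function $p(z)/(1-\overline{x}z)^\beta$ with $\Re p>0$. For a zero-free function one can take logarithms, and the key computation is that
$f(z):=\log\bigl(p(z)/(1-\overline{x}z)^\beta\bigr)$ is a \emph{Bloch} function: writing $\omega:=p_0^{-1}\circ p$ with $p_0(z)=(1+z)/(1-z)$, the Schwarz--Pick inequality gives $(1-|z|^2)|p'(z)|/|p(z)|=2(1-|z|^2)|\omega'(z)|/|1-\omega(z)^2|\le 2$, and the term coming from $(1-\overline{x}z)^{-\beta}$ is bounded likewise. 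Then \cite[Theorem 4.3, p.~76]{Pommerenke2} (radial limits of Bloch functions are angular limits) applied to $f$ finishes the proof; this is also where the hypothesis $M_\al(x)\neq 0$ enters, since one needs the radial limit of the \emph{logarithm} to be finite. If you insist on your formulation via normal functions, the same Bloch estimate is what you need: $e^f$ is normal whenever $f$ is Bloch, because $(1-|z|^2)\,(e^f)^{\#}(z)\le (1-|z|^2)|f'(z)|\,|e^{f}|/(1+|e^{f}|^2)\le \tfrac12(1-|z|^2)|f'(z)|$. Without this (or an equivalent) estimate your proposal does not constitute a proof.
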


\begin{proof} Thanks to Berkson\,--\,Porta formula~\eqref{EQ_B-P-representation} and \cite[Theorem 4.3 p.76]{Pommerenke2} it is enough to show that if $p:\D\to \C$ is holomorphic and $\Re p>0$, then $f(z):=\log (p(z)/(1-\overline{x}z)^\beta)$ is a Bloch function for any $\beta$, {\it i.e.} \[
\sup_{z\in \D} |f'(z)|(1-|z|^2)<+\infty.
\]
This is equivalent to proving that $(1-|z|^2)|p'(z)|/|p(z)|$ is bounded in $\D$. Let $p_0(z):=\frac{1+z}{1-z}$. Then $\omega(z):=p_0^{-1}\circ p(z)$ is a holomorphic self-map of $\D$. Hence using the Schwarz\,--\,Pick inequality we get
\[
\frac{|p'(z)|}{|p(z)|}=\frac{2|\omega'(z)|}{|1-\omega(z)^2|}\leq \frac{2}{1-|z|^2},
\]
and the desired result follows.
\end{proof}

\begin{remark}\label{RM_alphas_coincide_for_reg_sing}
If $x$ is a regular singularity of~$G$ of order~$\alpha$ different from
the Denjoy\,--\,Wolff point of the associated one-parameter semigroup, then $\alpha^+(G)=\alpha^-(G)=\alpha$, hence $\al\in [-1,1]\setminus\{0\}$. If $\alpha=-1$, then $x$ is a boundary regular pole, and if $\alpha=1$ then $x$ is a boundary regular null point of~$G$.

Similarly, if $G$ has a regular singularity of order~$\alpha$ at the Denjoy\,--\,Wolff point, then  $\alpha\in[1, 3]$. This case was deeply analyzed in~\cite{Elin-Khavinson-Reich-Shoikhet}. So from now on we will mainly consider regular singularities of order $\alpha\in[-1, 1)\setminus\{0\}$, hence different from the Denjoy\,--\,Wolff point.
\end{remark}

In the case of boundary regular null points and regular poles the limit~\eqref{EQ_limit-in-def-reg-sing} in Definition~\ref{DF_regular-singularity} can be replaced with the radial or angular limit of $|G(z)|/|1-z|^\alpha$ at~$x$. However, for $\al\in (-1,1)$ these three limits are pairwise non-equivalent, as the following two examples show.

\begin{example}\label{EX_rad-of-module-but-not-angle-lim}
Let $\alpha\in(-1,1)\setminus\{0\}$ and $a>0$. Consider $G(z):=-zp(z)$ with $p(z):=[(1+z)/(1-z)]^{\alpha}\exp(i\sin f(z))$ for $z\in\UD$, where $f$ is the conformal mapping of~$\UD$ onto ${\{w:|\Im w|<a\}}$ with $f(0)=0$, $f'(0)>0$, and the branch of the power functions is chosen in such a way that $p(0)=1$. If $\sinh a\le\pi(1-|\alpha|)/2$, then ${\Re p>0}$ and hence $G$ is an infinitesimal generator. It is easy to see that ${\lim_{(0,1)\ni r\to1}|G(r)|/(1-r)^{\alpha}}=2^{\alpha}$ and hence $\al_1^-(G)=\al_1^+(G)=\al$, while the angular limit $\anglim_{z\to 1}|G(z)|/|1-z|^{\alpha}$ does not exist. In particular, by Lemma~\ref{LM_ang_lim_at_frac_sing} and Remark~\ref{RM_alphas_coincide_for_reg_sing}, the point $1$ is not a {\sl regular} singularity of any order $\al'\in [-1,1]\setminus\{0\}$.
\end{example}

\begin{example}\label{EX_anglim-of-modulus-but-not-anglim-itself}
Similar to the previous example we consider an infinitesimal generator of the form $G(z):=-zp(z)$ with $p(z):=[(1+z)/(1-z)]^{\alpha}\exp g(z)$, where $|\Im g(z)|<\omega:=\pi(1-|\alpha|)/2$ for all~$z\in\UD$. If $\anglim_{z\to 1}\Re g(z)$ exists finitely, then ${\anglim_{z\to 1}|G(z)|/|1-z|^{\alpha}}$ exists. At the same time, if $\anglim_{z\to 1}\Im g(z)$ does not exist, then \eqref{EQ_ang_lim_at_frac_sing} does not  exist either and hence, by Lemma~\ref{LM_ang_lim_at_frac_sing}, again~$1$ is not a regular singularity of~$G$. Examples of such univalent functions $g$ can be constructed by means of the theory of prime ends, see, \textit{e.g.}, \cite[Chapter\,9]{ClusterSets}: consider a simply connected domain ${D\subset\omega\UD}$  having a prime end~$P$ of the fourth kind with the set of all principal points forming a non-degenerate segment parallel to the imaginary axis, and define $g$ to be any conformal map of~$\UD$ onto~$D$ under which~$1$ corresponds to the prime end~$P$.
\end{example}

Now we prove our theorem about the characterization of fractional order regular singularities of infinitesimal generators in terms of the boundary behaviour of the associated one-parameter semigroups and K\oe nigs function.

\begin{proof}[Proof of Theorem \ref{TH_fractional}]
Assume first that (i) holds and let us prove (ii). Note that $\int_0^1 dr/G(rx)$ converges. According to Proposition~\ref{PR_Koenigs-function}, this implies that $h(x)\in\Complex$. Therefore, by Proposition~\ref{PR_Pavel}(iii), $x$ is not a boundary fixed point of $(\phi_t)$.
Further, differentiating the identity $\phi_t\circ \phi_s=\phi_{t+s}$ in $s$ and taking $s=0$,  we obtain
\begin{equation}\label{EQ_diff}
\left[\phi_t'(z)(1-\overline{x}z)^\al\right]\frac{G(z)}{(1-\overline{x}z)^\al}=G(\phi_t(z))\quad\text{for all $z\in \D$}.
\end{equation}

If $\phi_t(x)\in \D$ for all $t>0$,  then $G(\phi_t(x))\neq 0$ for all $t>0$, because $\phi_t(x)$ cannot coincide with the Denjoy\,--\,Wolff point of $(\phi_t)$. Indeed, taking into account univalence of~$\phi_t$ in~$\UD$, we have $\phi_t(x):=\anglim_{z\to x}\phi_t(x)\in\partial\phi_t(\UD)$ for all~$t\ge0$.  Therefore, in this case,  with the help of Lemma~\ref{LM_ang_lim_at_frac_sing} we can pass in~\eqref{EQ_diff}  to the angular limit as $z\to x$, which immediately leads to~(ii) for all $t>0$.

If $x$ is a contact point of $\phi_t$ for some $t>0$, then $x$ is a contact point of~$\phi_t$ for all $t\in (0,t_1)$, where $t_1:=\sup\{t\ge0\colon\phi_t(x)\in\UC\}\in(0,+\infty]$. By Step~1 in the proof of Theorem~\ref{TH_contact-points}, $G$ extends holomorphically on the arc $A:=\{\phi_t(x)\colon t\in(0,t_1)\}$ and does not vanish on~$A$. Therefore, again \eqref{EQ_diff} together with Lemma~\ref{LM_ang_lim_at_frac_sing} implies (ii) for all $t\in(0,+\infty)\setminus\{t_1\}$.

Clearly (ii) implies (iii). So now we assume (iii) and let us prove (i). First consider the case $\al<0$ and let $t_0>0$ be such that the angular limit~\eqref{EQ_fractional-phi-prime} exists and belongs to~$\Complex^*$. Then $\anglim_{z\to x}\phi_{t_0}'(z)=0$. By \cite[Lemma 3.8]{BCD-M_regular-poles} it follows that $\phi_{t_0}(x)\in \D$. Since $\phi_{t_0}(x)$ cannot be the Denjoy\,--\,Wolff point of $(\phi_t)$ as we pointed out before, substituting $t:=t_0$ in \eqref{EQ_diff} and passing to the radial limit as $z\to x$, we obtain (i).

Consider now the case $\al>0$. Arguing as above, we see that to prove~(i) it is sufficient to show that~$x$ is not a boundary fixed point of~$(\phi_t)$. Suppose on the contrary that $x$ \textsl{is} a boundary fixed point of~$(\phi_t)$. Let $t_1>t_0>0$ be the two values for which the angular limit \eqref{EQ_fractional-phi-prime} exists and belongs to~$\Complex^*$. Note that $\phi'_{t_1}(z)=\phi'_{t_1-t_0}(\phi_{t_0}(z))\cdot \phi'_{t_0}(z)$ for all~$z\in\UD$. Multiplying this identity by $(1-\overline{x}z)^\al$ on both sides and passing to the radial limit as~$z\to x$, we conclude that there exists $\lim_{r\to 1^-}\phi'_{t_1-t_0}(\phi_{t_0}(rx))=:C\in \C^\ast$. By Lindel\"of's theorem it follows that $\anglim_{z\to x}\phi'_{t_1-t_0}(z)=C\in \C^\ast$ and hence, by \cite[Lemmas\,1\,and\,3]{CDP2}, $x$~is a boundary \textsl{regular} fixed point of~$\phi_t$ for all~$t\ge0$, which clearly contradicts~(iii).

Next, in case the Denjoy\,--\,Wolff point $\tau\in\partial\UD$ in order to prove the equivalence between (i) and~(iv) it is clearly enough to apply Lemma~\ref{LM_ang_lim_at_frac_sing} and Proposition~\ref{PR_Koenigs-function}(B).(ii), while in case~$\tau\in\UD$ one should first pass to the lifting of~$(\phi_t)$ given by~\cite[Proposition\,\,2.1]{Pavel}.

Finally, by \cite[Theorem\,\,1]{Starkov}, (v) implies~(iv). To prove the converse implication it is enough to apply the mean value theorem for the real and imaginary parts of~$h$ along all intervals in $\D$ ending at $x$. The proof is now complete.
\end{proof}

We end this section with a few more comments and examples.
\begin{example}
Let $\alpha\in(0,1)$. With the help of Berkson\,--\,Porta formula~\eqref{EQ_B-P-representation} it is easy to see that $G(z):=-i (1-z)^2 \left(\tfrac{i-z}{1-iz}\right)^\alpha$ for all $z\in\UD$, where the branch of the power function is chosen in such a way that $G(0)=-ie^{i\pi\alpha/2}$, is an infinitesimal generator. Note also that $A_0:=\{e^{i\theta}\colon \theta\in(\pi/2,3\pi/2)\}$ is a maximal contact arc of~$G$ and its initial point $x_0:=i$ is a regular singularity of order~$\alpha$ for~$G$, while its final point~$x_1:=-i$ is a regular singularity of order~$-\alpha$. In particular, by Theorem~\ref{TH_fractional}, $x_0$ and $x_1$ are not boundary fixed points of the associated one-parameter semigroup~$(\phi_t)$. Therefore, by the argument in the proof of Proposition~\ref{PR_contact_arcs}(iii), there exists~$t_1$ such that~$\phi_{t_1}(x_0)=x_1$. From~\eqref{EQ_diff} it follows that for $x:=x_0$ and $t:=t_1$ the limit~\eqref{EQ_fractional-phi-prime}  equals~$\infty$. Thus, this example shows that for~$\alpha\in(0,1)$, Theorem~\ref{TH_fractional}  would fail without allowing a possible exceptional value of~$t$ in assertion~(ii).
\end{example}
\begin{remark}
Note that in contrast to the above example, for $\alpha\in(-1,0)$ we can require in Theorem~\ref{TH_fractional}(ii) that~\eqref{EQ_fractional-phi-prime} exists and belongs to~$\Complex^*$  \textsl{for all}~$t>0$ without permitting any exceptional values. This follows directly from the proof of~Theorem~\ref{TH_fractional}.
\end{remark}
\begin{remark}
For~$\alpha=-1$, Theorem~\ref{TH_fractional} was already proved in~\cite{BCD-M_regular-poles}. The proof given in that paper does not rely on the study of contact points exploited in the present proof.

At the same time, according to~\cite[Theorem~1]{CDP2},  Theorem~\ref{TH_fractional} becomes false if~$\alpha=1$, which corresponds to the case of a regular null-point of~$G$. In fact, it is known (see, \textit{e.g.}, \cite[Prop.\,4.8 p.\,80]{Pommerenke2}) that since $\phi_t$ has a finite angular limit at~$x$, we have $(z-x)\phi'_{t}(z)\to0$ as $z\to x$ non-tangentially, and hence (ii) and~(iii) are never satisfied with~$\alpha=1$.

Finally, if one had to consider ``fractional singularities of order $\alpha=0$'' defined as before, one could check that assertion (i) in Theorem~\ref{TH_fractional} implies~(ii), but the converse implication is not true. For instance, if $x$ is any boundary regular fixed point of~$(\phi_t)$, then (ii) holds by the very definition, but by~\cite[Theorem~1]{CDP2}, $G$ has a regular singularity of order~$\alpha=1$ at~$x$.
\end{remark}

To conclude, we give an example showing that for $\alpha>0$ it is not possible to manage with only one value of~$t$ in assertion~(iii) of~Theorem~\ref{TH_fractional}.
\begin{example}
Let $\phi_t(z):=H^{-1}\big(H(z)^{\exp(-t)}\big)$ for all $z\in\UD$ and all~$t\ge0$, where $H(z):=(1+z)/(1-z)$ and the branch of the power function is chosen in such away that $\phi_t(0)=0$ for all~$t\ge0$. It is easy to see that $(\phi_t)$ is a one-parameter semigroup such that for every $t>0$, limit~\eqref{EQ_fractional-phi-prime} exists and belongs to~$\Complex^*$ for $x:=-1$ and $\alpha:=1-e^{-t}$, while the associated infinitesimal generator satisfies $G(z)[(1+z)\log(1+z)]^{-1}\to-1$ as $\UD\ni z\to-1$.
\end{example}

\section{Geometric conditions for fractional singularities}\label{S geometric}

The problem of characterization of boundary regular singularities of infinitesimal generators via geometric properties of the image domains of the corresponding K\oe nigs maps is closely related to the problem of existence of the non-vanishing angular derivative of a conformal map at a boundary point. Up to our best knowledge, the only known geometric characterization in the latter problem, see, \textit{e.g.}, \cite[Theorem A]{RodinWarschawski}, is given in terms of the extremal length. In concrete cases, it is very difficult to apply this characterization directly. Below we use a consequence of this characterization \cite[Theorem~5]{RodinWarschawski} to prove  quite general sufficient and necessary conditions for boundary regular singularities of order~$\alpha\in(-1,1]\setminus\{0\}$.

\subsection{Sufficient criteria} In order to formulate the sufficient conditions we need to introduce two notions.
Since we mainly deal with non-tangential rather than unrestricted limits at boundary points, the notion of a Dini-smooth corner suitable for our aim is as follows.
\begin{definition}
We say that a Jordan curve $\Gamma\subset\Complex$ has a \textsl{Dini-smooth corner of opening ${\theta\in[0,2\pi]}$ at a point~$w_0\in\Gamma$} if the following conditions are satisfied:
\begin{mylist}
\item[(i)] for all $\rho>0$ small enough $\Gamma$ intersects $C_\rho:=\{w:|w-w_0|=\rho\}$ at exactly two  points $\gamma^+(\rho)$ and $\gamma^-(\rho)$, and $\rho\mapsto\gamma^\pm(\rho)$ are continuous functions,

\item[(ii)]  the functions $\rho\mapsto\arg\big(\gamma^\pm(\rho)-w_0\big)$ have Dini-continuous extensions to $\rho=0$, and

\item[(iii)] the angle formed by the one-sided tangents to~$\Gamma$ at~$w_0$, interior w.r.t. the Jordan domain bounded by~$\Gamma$,  is of magnitude~$\theta$.
\end{mylist}
\end{definition}
Note that this definition differs substantially from the one given in \cite[\S3.4]{Pommerenke2}.
One can show that for any conformal mapping $f$ of $\UD$ onto a Jordan domain whose boundary has a Dini-smooth corner of opening~$\alpha\,\pi$, $\alpha\in(0,2]$, at a point $f(x)$, $x\in\UC$, there exists nonzero finite angular limit $\anglim_{z\to x}(f(z)-f(x))/(z-x)^\alpha$. To extend such kind of statements to a much wider class of domains we introduce the following definition.
\begin{definition}
Let $\Gamma\subset\Complex$ be a Jordan curve and $w_0\in\Gamma$. A set $E\subset \Gamma$ is said to be \textsl{locally dense on~$\Gamma$ at the point~$w_0$} if for some (and hence every)  $w_1\in\Gamma\setminus\{w_0\}$ each of the two connected components of~$\Gamma\setminus\{w_0,w_1\}$ contains a sequence $(w_n)\subset E$ such that $|w_n-w_0|$ converges monotonically to~$0$ and $$\sum_{n\in\Natural}\left(\log\frac{|w_{n\hphantom{+}}-_{\hphantom{1}}w_0|}{|w_{n+1}-w_0|}\right)^2<+\infty.$$
\end{definition}

Now we can give the first sufficient criteria for fractional order regular singularities:

\begin{theorem}\label{TH_geometr-charact-alpha_neq_1}
Let $(\phi_t)$ be a one-parameter semigroup with associated infinitesimal generator $G$ and K\oe nigs function $h$. Let $x\in\UC$, and let $\alpha\in(-1,1)\setminus\{0\}$. Suppose that there exist Jordan domains~$\Omega_0$ and $\Omega_1$ (possibly $\Omega_0=\Omega_1$) with $w_0:=h(x)\subset\partial \Omega_0\cap\partial\Omega_1$ such that
\begin{mylist}
\item[(i)] $h([rx,x))\subset \Omega_0\cap\Omega_1$ for some $r\in(0,1)$;
\item[(ii)] the Jordan curves $\partial\Omega_j$, $j=0,1$, have Dini-smooth corners of opening~$(1-\alpha)\pi$~at~$w_0$;
\item[(iii)] $\Omega_0\subset h(\UD)$ and the set $\partial \Omega_1\setminus h(\UD)$ is locally dense on~$\partial \Omega_1$ at~$w_0$.
\end{mylist}
Then $G$ has a boundary regular singularity of order~$\alpha$ at~$x$.
\end{theorem}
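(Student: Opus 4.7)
The plan is to verify condition~(v) of Theorem~\ref{TH_fractional}, namely that the non-tangential limit
$$
\anglim_{z \to x}\frac{h(z) - w_0}{(1-\overline{x}\,z)^{1-\alpha}}
$$
exists and lies in~$\Complex^*$; this, via Theorem~\ref{TH_fractional}, will give the regular singularity of order~$\alpha$ at~$x$. The central idea is to compare $h$ near $x$ with conformal maps onto the model domains $\Omega_0$ and $\Omega_1$ by means of the Rodin--Warschawski theory (Theorem~5 of~\cite{RodinWarschawski}).

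As a first step, for $j = 0, 1$ take a conformal map $f_j\colon\UD\to\Omega_j$ and let $y_j\in\partial\UD$ denote the preimage of~$w_0$ under its Carath\'eodory extension. From the Dini-smooth corner hypothesis~(ii) and Warschawski's classical theorem on angular derivatives at corners, one obtains
$$
\anglim_{z \to y_j}\frac{f_j(z) - w_0}{(1-\overline{y_j}\,z)^{1-\alpha}} = c_j \in \Complex^*,\qquad j=0,1.
$$
These are the model asymptotics that $h$ must inherit.

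The heart of the argument is the following comparison. Define the holomorphic self-map $\sigma_0 := h^{-1}\circ f_0 \colon \UD\to\UD$, which is well defined thanks to $\Omega_0\subset h(\UD)$ and univalence of~$h$. Using condition~(i) together with Lindel\"of's theorem (Theorem~\ref{TH_Lind}) applied to $\sigma_0$ along the curve $f_0^{-1}\big(h([rx,x))\big)$, one obtains $\anglim_{z\to y_0}\sigma_0(z)=x$. The decisive claim is that $\sigma_0$ has a \emph{finite} nonzero angular derivative at~$y_0$: non-vanishing is automatic by the Julia--Wolff--Carath\'eodory theorem, while finiteness will follow from Theorem~5 of~\cite{RodinWarschawski} applied to the pair $\big(h(\UD),\Omega_1\big)$. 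Here the summability condition defining local density of $\partial\Omega_1\setminus h(\UD)$ on $\partial\Omega_1$ at~$w_0$ is precisely what is needed to produce a divergent extremal-length estimate for families of crosscuts separating $w_0$ from the image of the radius, yielding the requisite outer bound on $h^{-1}$; the inner inclusion $\Omega_0\subset h(\UD)$ provides the matching inner bound via Schwarz--Pick applied to~$\sigma_0$.

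Once $\sigma_0'(y_0)\in\Complex^*$ is known, combining the identity $h\circ\sigma_0=f_0$ with the corner asymptotic for $f_0$ and a non-tangential change of variable (using that a self-map of $\UD$ with finite nonzero angular derivative at $y_0$ maps Stolz angles at $y_0$ into Stolz angles at $x$) yields
$$
\frac{h(w)-w_0}{(1-\overline{x}\,w)^{1-\alpha}} \longrightarrow c_0\,(\overline{y_0}/\overline{x})^{1-\alpha}\,\big(\sigma_0'(y_0)\big)^{\alpha-1} \in \Complex^*
$$
along the image under $\sigma_0$ of a Stolz angle at~$y_0$, which is a curve landing non-tangentially at~$x$. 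A final application of Lindel\"of's theorem (Theorem~\ref{TH_Lind}, exploiting the univalence of~$h$ to get normality of the relevant factor) upgrades this sequential limit to the full angular limit required by~(v). The principal obstacle is the careful application of Rodin--Warschawski Theorem~5: one must translate the summability condition on the sequence of points in $\partial\Omega_1\setminus h(\UD)$ into a divergent extremal-length estimate for crosscut families near $w_0$, and then combine this with the inner inclusion $\Omega_0\subset h(\UD)$ to get two-sided Julia control of~$\sigma_0$.
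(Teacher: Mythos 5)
Your high-level plan is the same as the paper's: reduce to assertion (v) of Theorem~\ref{TH_fractional} and verify it via the Rodin--Warschawski angular-derivative machinery in the logarithmic coordinates $F(w)=iC_0-((1-\alpha)\pi)^{-1}\log(w-w_0)$, and your reformulation through $\sigma_0=h^{-1}\circ f_0$ is legitimate: once $F(\Omega_0)$ is known to have an angular derivative at $+\infty$, finiteness of $\sigma_0'(y_0)$ amounts to $F(h(\UD))$ having one, which is precisely the crux. The gap is that your sketch of this crux does not work as described. A \textsl{divergent} extremal-length estimate for crosscut families separating $w_0$ from the radius is the wrong currency: divergence only yields qualitative conclusions (degeneracy of the prime end, landing of curves), whereas the angular derivative requires the quantitative hypotheses of \cite[Theorem~5]{RodinWarschawski} --- a subdivision $(u_n)$ with $\sum(u_{n+1}-u_n)^2<+\infty$ together with square-summable \textsl{two-sided} deviations of the central cross-sections $J(u)$ of $F(h(\UD))$ from the lines $\Im\zeta=\pm\tfrac12$. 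You have also misassigned the roles of the two halves of hypothesis~(iii): Julia's lemma already gives $\liminf(1-|\sigma_0(z)|)/(1-|z|)>0$ for any self-map, so Schwarz--Pick cannot be what encodes $\Omega_0\subset h(\UD)$; that inclusion is needed to bound the cross-sections of $F(h(\UD))$ from \textsl{inside} at every abscissa, while the locally dense set $\partial\Omega_1\setminus h(\UD)$ bounds them from \textsl{outside} only at two square-summably spaced sequences $(a_n^{\pm})$, one on each boundary arc. The genuinely technical step --- merging these two sequences with the subdivision produced by the Dini-smooth corners into a single subdivision whose gaps stay square-summable and each of which contains a pinch point on both sides (the paper's Claim~B), so that the $\sup\inf$/$\inf\sup$ quantities in \cite[Theorem~5]{RodinWarschawski} become controllable --- is absent from your argument.

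Two further points. First, be aware that the cheap outer bound $h(\UD)\subset\Complex\setminus S$ is useless here: for $\alpha\in(-1,1)$ the image $F(\Complex\setminus S)$ is a strip of width $2/(1-\alpha)>1$, strictly wider than the target strip, so no squeeze between $F(\Omega_0)$ and a domain with a matching angular derivative comes for free (unlike the case $\alpha=-1$ of Theorem~\ref{TH_geometr-charact-alpha=1}); the $\Omega_1$-condition is doing real work and any proof must use it quantitatively. Second, your final Lindel\"of upgrade needs justification: normality of $z\mapsto (h(z)-w_0)/(1-\overline{x}z)^{1-\alpha}$ does not follow from univalence of $h$ alone; the paper sidesteps this by passing through the derivative version (iv) of Theorem~\ref{TH_fractional} and Lemma~\ref{LM_ang_lim_at_frac_sing}.
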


\begin{proof}
Fix $\alpha\in(-1,1)\setminus\{0\}$.  We have to prove that $x$ is a regular singularity of order~$\alpha$ for~$G$. By Theorem~\ref{TH_fractional} it is sufficient to show that the following limit exists:
\begin{equation}\label{EQ_anglim-frac-h}
\anglim_{z\to x}\frac{h(z)-h(x)\hphantom{{}^{1-\alpha}}}{(1-\overline x\,z)^{1-\alpha}}\in\Complex^*.
\end{equation}
Moreover, notice that $h(\UD)\cap S=\emptyset$, where $S$ is a ray in~$\Complex$ or a part of a logarithmic spiral beginning from~$w_0=h(x)$ and tending to~$\infty$. Choose any single-valued branch~$F$ of~$w\mapsto iC_0-((1-\alpha)\pi)^{-1}\log(w-w_0)$ in $\Complex\setminus S$, where $C_0$ is a real constant to be determined later, and consider the univalent function $$f:=(F\circ h\circ q)^{-1},\quad\text{where~} q(z):=x\frac{e^{\pi z}-1}{e^{\pi z}+1}~~\text{~for~all~$|\Im z|<1/2$},$$
that maps $F(h(\UD))$ onto the strip $\Pi:=\{z\colon \,|\Im z|<1/2\}$. In this setting,~\eqref{EQ_anglim-frac-h} is equivalent to the existence of a finite limit of $f^{-1}(z)-z$ as $\Re z \to+\infty$, $|\Im z|<a$, for all $a\in(0,1/2)$.

By hypothesis, $\partial\Omega_0$ has a Dini-smooth corner of opening~$(1-\alpha)\pi$ at~$w_0$. Therefore, choosing a suitable value of the constant $C_0$, we have that for any $a\in(0,1/2)$ there exists $u=u(a)>0$ such that $\{\zeta\colon\,\Re \zeta>u,\,|\Im\zeta|<a\}\subset F(\Omega_0)$. Thus, our task is reduced to proving that {\sl $F(h(\UD))$ has angular derivative at~$+\infty$ in the sense of Rodin\,--\,Warschawski} (see~\cite[Section~4]{RodinWarschawski}).

\Claim A{the domain $F(\Omega_0)$ has  angular derivative at~$+\infty$ (in the sense of Rodin\,--\,Warschawski \cite[Section~4]{RodinWarschawski})}
From the fact that $\partial\Omega_0$ has a Dini-smooth corner at~$w_0$, it follows that for all $u>0$ large enough, say for $u\ge u_0$, the intersection of $F(\Omega_0)$ with the line $\{\zeta\colon \Re\zeta=u\}$ is a segment $\big(u+i\nu_0'(u),u+i\nu_0''(u)\big)$, $\nu_0'(u)<\nu_0''(u)$, and that the function $$\omega(u):=\sup\{|\nu_0'(\tilde u)+\tfrac12|,|\nu_0''(\tilde u)-\tfrac12|\colon\,\tilde u\ge u\},\qquad u\ge u_0,$$
is continuous, non-increasing and integrable on $[u_0,+\infty)$. We may also assume that $\omega(u)>0$ for all $u\ge u_0$, because otherwise our claim is trivial.
Using these properties of~$\omega$, it is easy construct inductively a sequence $(u_n)\subset(u_0,+\infty)$ tending monotonically to~$+\infty$ such that $u_n-u_{n-1}=\omega(u_n)$ for all~$n\in\Natural$. Then we have
$$
\sum_{n\in\Natural}(u_n-u_{n-1})^2=\sum_{n\in\Natural} \omega(u_n)^2=\sum_{n\in\Natural}(u_n-u_{n-1})\omega(u_n)\le\int_{u_0}^{+\infty} \omega(u)\,du<+\infty.
$$
Thus $\{u_n\}$ is a so-called {\sl subdivision} of $F(\Omega_0)$ (see \cite[Section~2]{RodinWarschawski}) such that the sequences $\delta_n:=u_{n+1}-u_n$, $\theta_n':=\tfrac12+\sup\{\nu_0'(u)\colon u\in[u_n,u_{n+1}]\}$, and $\theta_n'':=\tfrac12-\inf\{\nu_0''(u)\colon u\in[u_n,u_{n+1}]\}$ are square-summable. Thus Claim A follows directly from \cite[Theorem\,\,5]{RodinWarschawski}. \vskip2mm

Now, denote by $F_1$ the holomorphic extension of $F$ from the connected component of~$\Omega_1\setminus S$ containing~$h([rx,x))$ to the whole~$\Omega_1$. Fix any point $\zeta_1\in\partial F_1(\Omega_1)$ different from~$\infty$. By hypothesis, it follows that the components of $\partial F_1(\Omega_1)\setminus\{\zeta_1,\infty\}$ are two Jordan arcs $\Gamma^+$ and $\Gamma^-$ approaching~$\infty$ along the two asymptotes $\{u+i/2\colon u>0\}$ and $\{u-i/2\colon u>0\}$, respectively. Therefore, the argument in the proof of Claim~A can be repeated with the function~$\omega$ replaced by
$$\omega_1(u):=\sup\{|\nu'_0(\tilde u)+\tfrac12|,|\nu''_0(\tilde u)-\tfrac12|,|\nu_1'(\tilde u)+\tfrac12|,|\nu_1''(\tilde u)-\tfrac12|\colon\,\tilde u\ge u\},$$ where  $\nu_1'(u)$ and $\nu_1''(u)$ are defined for the domain $F_1(\Omega_1)$ in the same way as $\nu_0'(u)$ and $\nu_0''(u)$ were defined for $F(\Omega_0)$. For convenience we will keep the same notation~$(u_n)$ for the new subdivision of~$F(\Omega_0)$ and~$F_1(\Omega_1)$ constructed in this way.  In particular, we have
\begin{equation}\label{EQ_omega_1^2-converges}
\sum_{n\in\Natural}\omega_1(u_n)^2<+\infty.
\end{equation}\vskip2mm

\Claim B{There is a subsequence~$(u^*_n)$ of $(u_n)$ such that $\sum_{n\in\Natural}(u^*_{n+1}-u^*_n)^2<+\infty$ and for each $n\in\Natural$ the strip $\{\zeta\colon u^*_n\le\Re\zeta\le u^*_{n+1}\}$ intersects both $\Gamma^+\setminus F(h(\UD))$ and~$\Gamma^-\setminus F(h(\UD))$}

By condition~(iii), there exist two sequences $(\xi_n^\pm)\subset\Gamma^\pm\setminus F(h(\UD))$ such that $a^\pm_n:=\Re\xi^\pm_n$ tend monotonically to~$+\infty$ and the series $\sum(a_{n+1}^\pm-a_{n}^\pm)^2$ converge. For $x\in\Real$ let us denote by $a^\pm(x)$ the element of $\{a^\pm_n:n\in \Natural\}\cap(x,+\infty)$ closest to~$x$. Similarly, we define the function $u(x)$ for the sequence $(u_n)$. Now we construct~$(u_n^*)$ recurrently in the following way. First we put $q_1:=\max\{a_1^+,a_1^-\}$. Then for each $n\in\Natural$ we let $u_{n}^*:=u(q_n)$ and $q_{n+1}:=\max\{a^+(u_{n}^*), a^-(u_{n}^*)\}$.

By the very construction, $(u_n^*)$ is a subsequence of~$(u_n)$ and each segment $[u_n^*,u_{n+1}^*]$ contains at least one of $a^+_j$'s and one of~$a^-_j$'s. Again by construction, for every $n\in\Natural$, the interval $I'_n:=(u^*_n,q_{n+1})$ can contain points of only one of the sequences $(a^+_n)$ or $(a^-_n)$. Hence $I_n'$ lies between two consecutive elements of~$(a^+_n)$ or  between two consecutive elements of~$(a^-_n)$. Therefore, $\sum_{n\in\Natural}(q_{n+1}-u^*_n)^2<+\infty$. Similarly, $I_n'':=(q_{n+1}, u^*_{n+1})$ lies between two consecutive elements of~$(u_n)$ and hence $\sum_{n\in\Natural}(u^*_{n+1}-q_{n+1})^2<+\infty$. It follows that $\sum_{n\in\Natural}(u^*_{n+1}-u^*_{n})^2<+\infty$ and thus Claim~B is proved.\vskip2mm

Now let $J(u)$ stand for the connected component of~$F(h(\UD))\cap\{\zeta\colon\Re\zeta=u\}$ that intersects~$\Real$ and denote $$v'_n:=~\sup_{\mathclap{\,u^*_{n}\le u\le u^*_{n+1}}\,}~\inf\{\Im\zeta\colon\zeta\in J(u)\}, \quad v''_n:=~\inf_{\mathclap{\,u^*_{n}\le u\le u^*_{n+1}}\,}~\sup\{\Im\zeta\colon\zeta\in J(u)\}.$$
Note that, on the one hand, $\big[u,u+i\nu_0''(u)\big)\subset J(u)\cap\{\zeta\colon\Im\zeta>0\}$ for all~$u\ge u_0$, while on the other hand, $J(u)\cap\{\zeta\colon\Im\zeta>0\}\subset \big[u,u+i\nu_1''(u)\big)$ holds for all $u\in\{a_n^+:\,n\in\Natural\}$. Therefore, $|v''_n-\tfrac12|\le\omega_1(u^*_n)$ for all $n\in\Natural$.
Since $(u_n^*)$ is a subsequence of $(u_n)$, from~\eqref{EQ_omega_1^2-converges} we get
$$
\sum_{n\in\Natural}(v''_n-\tfrac12)^2\le \sum_{n\in\Natural}\omega_1(u_n^*)^2\le \sum_{n\in\Natural}\omega_1(u_n)^2<+\infty.
$$
Analogously, $\sum_{n\in\Natural}(v'_n+\tfrac12)^2<+\infty$. Recall finally that also $\sum(u^*_{n+1}-u^*_n)^2$ converges. Thus by \cite[Theorem~5]{RodinWarschawski}, $F(h(\UD))$ has an angular derivative at~$+\infty$ in the sense of Rodin\,--\,Warschawski, and we are done.
\end{proof}

\begin{remark}
The above result holds also for $\alpha=0$, implying that under the hypotheses of Theorem~\ref{TH_geometr-charact-alpha_neq_1} with $\alpha=0$, the infinitesimal generator~$G$ has finite nonzero angular limit at $x$.
\end{remark}

For the case $\alpha=-1$,  as a ``byproduct" of the proof of Theorem~\ref{TH_geometr-charact-alpha_neq_1} we obtain the following simpler criterion:

\begin{theorem}\label{TH_geometr-charact-alpha=1}
Let $(\phi_t)$ be a one-parameter semigroup with associated infinitesimal generator $G$ and K\oe nigs function $h$. Let $x\in\UC$, and suppose that there exists a Jordan domain~$\Omega_0\subset h(\UD)$ with $w_0:=h(x)\subset\partial \Omega_0$ such that $h([rx,x))\subset \Omega_0$ for some $r\in(0,1)$ and $\partial\Omega_0$ has a Dini-smooth corner of opening~$2\pi$ at $w_0$.
Then $G$ has a boundary regular pole at~$x$.
\end{theorem}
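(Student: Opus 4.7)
The plan is to adapt the proof of Theorem~\ref{TH_geometr-charact-alpha_neq_1} to the case $\alpha=-1$, noting that the opening~$2\pi$ at~$w_0$ is exactly what makes the construction of an auxiliary outer domain~$\Omega_1$ unnecessary: the required upper bound on the straightened image of~$h(\UD)$ comes for free from simple connectedness. By Theorem~\ref{TH_fractional}(v) applied with $\alpha=-1$, it suffices to show that $\anglim_{z\to x}(h(z)-h(x))(1-\overline{x}z)^{-2}$ exists and belongs to~$\Complex^*$. Following the previous proof, I would pick a ray or part of a logarithmic spiral $S\subset\Complex\setminus h(\UD)$ from~$w_0$ to~$\infty$, choose a single-valued branch $F$ of $w\mapsto iC_0-(2\pi)^{-1}\log(w-w_0)$ on~$\Complex\setminus S$ for a suitable $C_0\in\Real$, and reduce the problem to showing that the simply connected domain $F(h(\UD))$ has an angular derivative at~$+\infty$ in the sense of Rodin\,--\,Warschawski.

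Claim~A of the proof of Theorem~\ref{TH_geometr-charact-alpha_neq_1} uses only the existence of a Dini-smooth corner of opening~$(1-\alpha)\pi$ at~$w_0$, which in our situation is~$2\pi$; it applies verbatim to~$F(\Omega_0)$ and produces a subdivision $(u_n)\to+\infty$ with $\sum_n(u_{n+1}-u_n)^2<\infty$ together with a non-increasing function $\omega(u)\to 0$ controlling the distance of the vertical sections of~$F(\Omega_0)$ from the boundary of the strip $\Pi:=\{\zeta\colon|\Im\zeta|<1/2\}$, and satisfying $\sum_n\omega(u_n)^2<\infty$.

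The new ingredient, which replaces Claim~B and the $\Omega_1$-argument, is the observation that $F$ is injective on~$h(\UD)$: since $h$ is univalent, $h(\UD)$ is simply connected and $w_0=h(x)\notin h(\UD)$, so an equality $F(w_1)=F(w_2)+ik$ with $w_1,w_2\in h(\UD)$ and $k\in\Int$ forces $w_1-w_0=e^{-2\pi ik}(w_2-w_0)=w_2-w_0$ and hence $k=0$. Therefore $F(h(\UD))\cap\big(F(h(\UD))+i\big)=\emptyset$, which implies that every connected component of $F(h(\UD))\cap\{\Re\zeta=u\}$ is an interval of length at most~$1$. Applied to the component $J(u)$ containing the section $F(\Omega_0)\cap\{\Re\zeta=u\}=(u+i\nu_0'(u),u+i\nu_0''(u))$, this would yield endpoints $u+i\mu'(u)$, $u+i\mu''(u)$ satisfying $\mu'\le\nu_0'$, $\mu''\ge\nu_0''$ and $\mu''-\mu'\le 1$, from which $|\mu'(u)+\tfrac12|,|\mu''(u)-\tfrac12|\le\omega(u)$ follows by an elementary sandwich argument.

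With this bound in place, the subdivision $(u_n)$ from Claim~A serves directly for $F(h(\UD))$: the sequences analogous to the $v'_n,v''_n$ of Claim~B are majorised by $\omega(u_n)$ and hence square-summable, so \cite[Theorem~5]{RodinWarschawski} applied to $F(h(\UD))$ produces the required angular derivative at~$+\infty$, concluding the argument. The main obstacle I anticipate is the clean identification of $J(u)$ as the correct component of $F(h(\UD))\cap\{\Re\zeta=u\}$ and the derivation of the sandwich bound $\mu''-\mu'\le 1$ on that component in the presence of other components of $F(h(\UD))\cap\{\Re\zeta=u\}$ that may accumulate near~$\partial J(u)$; once this point is handled, the remainder is essentially a verbatim repetition of Claim~A.
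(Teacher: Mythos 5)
Your argument is correct, but it takes a genuinely different route from the paper's. The paper's own proof is a two-line squeeze: arguing as in Theorem~\ref{TH_geometr-charact-alpha_neq_1}, both $F(\Omega_0)$ and $F(\Complex\setminus S)$ have an angular derivative at~$+\infty$ in the Rodin--Warschawski sense, and since $\Omega_0\subset h(\UD)\subset\Complex\setminus S$, the extremal-length characterization \cite[Theorem~A]{RodinWarschawski} (extremal distance being monotone under inclusion) forces the intermediate domain $F(h(\UD))$ to have one as well; no upper estimate on the cross-sections of $F(h(\UD))$ is ever computed. You instead verify the hypotheses of \cite[Theorem~5]{RodinWarschawski} directly for $F(h(\UD))$, replacing the role of the outer domain~$\Omega_1$ from the proof of Theorem~\ref{TH_geometr-charact-alpha_neq_1} by the observation that $F$ is injective on~$h(\UD)$ (a single-valued branch of a multiple of $\log(w-w_0)$ on a simply connected domain omitting~$w_0$), so that $F(h(\UD))$ is disjoint from its translate by~$i$ and every component of a vertical cross-section has length at most~$1$; combined with the lower bound coming from $F(\Omega_0)$ this pins $J(u)$ to within $\omega(u)$ of the full cross-section of~$\Pi$, and Theorem~5 applies with the subdivision already built in Claim~A. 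This is sound, and the obstacle you flag at the end is not a real one: since $\Omega_0\subset h(\UD)$, the segment $\big(u+i\nu_0'(u),u+i\nu_0''(u)\big)$ is a connected subset of $F(h(\UD))\cap\{\zeta\colon\Re\zeta=u\}$ and therefore lies in a single component, which for $u$ large is the one meeting~$\Real$; the length bound holds for every component, so nearby components cannot interfere with the sandwich. Your approach costs a rerun of the Claim~A machinery but stays entirely within the Theorem~5 framework; the paper's is shorter and, as it remarks, the statement also follows from \cite[Corollary 11.11]{Pommerenke2} or from Bertilsson's criterion combined with \cite[Theorem~1.1]{BCD-M_regular-poles}.
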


\begin{proof}
Let $\alpha:=-1$.  Arguing as in the proof of Theorem~\ref{TH_geometr-charact-alpha_neq_1}, we see that the domains $F(\Omega_0)$ and $F(\C\setminus S)$ both have angular derivative at~$+\infty$ in the sense of Rodin\,--\,Warschawski. Since  $\Omega_0\subset h(\UD)\subset\Complex\setminus S$, this implies (use, \textit{e.g.}, \cite[Theorem~A]{RodinWarschawski}) that $F(h(\UD))$ has also angular derivative at~$+\infty$ in the sense of Rodin\,--\,Warschawski, which proves the theorem.
\end{proof}

\begin{remark}
Theorem \ref{TH_geometr-charact-alpha=1}  can be also proved using~\cite[Corollary 11.11, p.\,261--262]{Pommerenke2}, or as a consequence
of~\cite[Corollary 2.36, p.\,36]{Bertilsson} if one takes into account the relation between $\beta$-points of K\oe nigs functions and boundary regular poles of infinitesimal generators \cite[Theorem~1.1]{BCD-M_regular-poles} (see also the next section).
\end{remark}

\subsection{Necessary criteria} We first give an easy necessary condition for fractional order regular singularities, analogous to the geometrical characterization of isogonality (see, \textit{e.g.}, \cite[Theorem\,\,11.6, p.\,254]{Pommerenke2}):

\begin{theorem}\label{TH_necessary-frac-singularity}
Let $(\phi_t)$ be a one-parameter semigroup with associated infinitesimal generator $G$  and  K\oe nigs function $h$. Suppose that $x\in\UC$ is a boundary regular singularity of order~${\alpha\in[-1,1)\setminus\{0\}}$. Then the following assertions hold:
\begin{mylist}
\item[(i)] for any $\theta\in\big(0,(1-\alpha)\pi\big)$ there exists $\rho>0$ such that $S(\theta,\rho):=\{\nu\varrho e^{i\vartheta}\colon{|\vartheta|<\theta/2,\,}\\ {0<\varrho<\rho}\}\subset h(\UD)$, where $\nu:=\lim_{r\to1-}\big(h(x)-h(rx)\big)\big|h(x)-h(rx)\big|^{-1}$;

\item[(ii)] for any $\theta>(1-\alpha)\pi$ and any $\rho>0$, $S(\theta,\rho)\not\subset h(\UD)$.
\end{mylist}
\end{theorem}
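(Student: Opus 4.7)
The proof is driven by Theorem~\ref{TH_fractional}(v), which under the hypothesis gives
$$
L \;:=\; \anglim_{z\to x}\frac{h(z)-h(x)}{(1-\overline{x}z)^{1-\alpha}} \;\in\; \C^*,
$$
equivalently $h(z)-h(x) = L(1-\overline{x}z)^{1-\alpha}(1+o(1))$ as $z\to x$, uniformly in every Stolz angle at~$x$. The plan is to change coordinates via $\zeta := 1 - \overline{x}z$, turning the Stolz angle of aperture $\pi-2\delta$ at~$x$ into $K_\delta:=\{\zeta:|\arg\zeta|<\pi/2-\delta\}$ near~$0$, and to study the univalent function $F(\zeta):=h(x(1-\zeta))-h(x)$, which satisfies $F(\zeta)\sim L\zeta^{1-\alpha}$. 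Specializing to $\zeta = 1-r\in(0,1)$ identifies the direction~$\nu$ of the statement with $\pm L/|L|$, so the sectors targeted by the theorem are those centered on this direction with vertex at~$h(x)$.

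For (i), fix $\theta\in(0,(1-\alpha)\pi)$ and choose $\delta>0$ with $(1-\alpha)(\pi-2\delta)>\theta$. I consider the rescalings
$$
F_\rho(\zeta) \;:=\; \bigl(L\rho^{1-\alpha}\bigr)^{-1}\,F(\rho\zeta), \qquad \rho\to 0^+.
$$
The asymptotic yields $F_\rho\to(\zeta\mapsto\zeta^{1-\alpha})$ locally uniformly on $K_\delta$, and this limit is a biholomorphism of $K_\delta$ onto the sector $T_\delta$ of opening $(1-\alpha)(\pi-2\delta)$ about the positive real axis. A Hurwitz/degree argument then shows that for $\rho$ small enough $F_\rho(K_\delta)$ contains any prescribed relatively compact subsector~$T'$ of $T_\delta$ of opening~$\theta$: pick a Jordan loop in $K_\delta$ whose image under $\zeta^{1-\alpha}$ winds once around $T'$, and apply the argument principle to $F_\rho(\zeta)-w$ for $w\in T'$. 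Unscaling in~$w$ gives $h(x)+S(\theta,\rho_0)\subset h(\D)$ for some $\rho_0>0$, with the sector centered on the direction $L/|L|=\pm\nu$.

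For (ii), I argue by contradiction. Assume $h(x)+S(\theta,\rho)\subset h(\D)$ for some $\theta>(1-\alpha)\pi$. Since $F$ is univalent near~$0$, inversion of its asymptotic yields
$$
F^{-1}(w) \;=\; \bigl(L^{-1}w\bigr)^{1/(1-\alpha)}\bigl(1+o(1)\bigr) \qquad \text{as } w\to 0 \text{ in } S(\theta,\rho).
$$
Running the previous rescaling argument in reverse, the preimage $F^{-1}(S(\theta,\rho)\cap B(0,\varepsilon))$ contains, in the $\zeta$-plane, a region whose rescalings converge to a sector of opening $\theta/(1-\alpha)>\pi$ at $\zeta=0$. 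But in the $\zeta$-coordinate $\D$ reads $\{|1-\zeta|<1\}$, which near~$0$ lies inside the right half-plane $\{\Re\zeta>0\}$ of opening~$\pi$: impossible.

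The main technical obstacle is the Hurwitz-type covering step of~(i): upgrading the one-sided angular asymptotic $F\sim L\zeta^{1-\alpha}$ into a quantitative surjectivity statement, so that the image of a truncated Stolz region genuinely fills an open sector (and does not merely send rays to the correct directions). Once this rescaling/argument-principle machinery is set up carefully, part~(ii) follows from the same mechanism applied to the univalent inverse, with the final contradiction reduced to the elementary angle comparison $\theta/(1-\alpha)>\pi$.
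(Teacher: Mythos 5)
Part (i) of your proposal is sound and is essentially the paper's argument in different coordinates: the paper passes to the strip via $\zeta\mapsto iC_0-((1-\alpha)\pi)^{-1}\log(w-h(x))$ and uses that $g(z)-z\to\beta$ to conclude that the image contains every substrip $\{\Re\zeta>u,\ |\Im\zeta|<a\}$, $a<1/2$, which after exponentiating is exactly your statement that every sector of opening $<(1-\alpha)\pi$ with vertex $h(x)$ is eventually covered; your rescaling $F_\rho\to\zeta^{1-\alpha}$ plus Rouch\'e makes explicit the covering step that the paper leaves implicit. (Two minor points: the sector must be taken with vertex at $h(x)$, and you should fix the sign $\nu=-L/|L|$; also a single small $\rho$ only yields an annular subsector, so you need the union over $\rho\in(0,\rho_0)$ to fill $S(\theta,\rho_0)$ down to the vertex --- but this is routine.)

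Part (ii) has a genuine gap. You write that ``inversion of the asymptotic yields $F^{-1}(w)=(L^{-1}w)^{1/(1-\alpha)}(1+o(1))$ as $w\to0$ in $S(\theta,\rho)$.'' But the asymptotic $F(\zeta)\sim L\zeta^{1-\alpha}$ is an \emph{angular} limit: it is known only on Stolz cones $K_\delta$, whose images are (asymptotically) sectors of opening $(1-\alpha)(\pi-2\delta)<(1-\alpha)\pi$. On the hypothesized larger sector $S(\theta,\rho)$ with $\theta>(1-\alpha)\pi$ you have no control whatsoever on $F^{-1}$ --- you do not even know a priori that $F^{-1}(w)\to0$ as $w\to 0$ in $S(\theta,\rho)$ away from the central ray, since those $w$ are not known to be values of $F$ on any Stolz cone. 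Asserting the inverse power asymptotic there is essentially assuming the conclusion, and the subsequent ``sector of opening $\theta/(1-\alpha)>\pi$ cannot fit in the half-plane'' contradiction never gets off the ground. The paper avoids this by invoking an extremal-length result (\cite[Proposition~4]{RodinWarschawski}) applied to $F(h(\UD))$ in the strip model. If you want to stay with your soft framework, a workable repair is a Harnack/Julia-type comparison of decay rates: $\Phi:=F^{-1}|_{S(\theta,\rho)}$ maps the sector of opening $\theta$ into $\{\Re\zeta>0\}$, so $\Re\Phi$ composed with $w\mapsto w^{\pi/\theta}$ is a positive harmonic function on a half-disc with an analytic boundary arc through $0$, whence $\Re\Phi(s\mu)\ge c\,s^{\pi/\theta}$ along the central ray; on the other hand the known angular asymptotic (transferred from the curve $F((0,\varepsilon))$ to the nearby central ray by one application of Harnack) gives $\Re\Phi(s\mu)\le C s^{1/(1-\alpha)}=o(s^{\pi/\theta})$ because $\theta>(1-\alpha)\pi$. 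Either way, some quantitative input beyond the Hurwitz mechanism of part (i) is required; as written, part (ii) does not prove the statement.
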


\begin{proof}
First of all note that $\int_0^1 |G(rx)|^{-1}\,dr<+\infty$. Hence by Proposition~\ref{PR_Koenigs-function}, $h(x)$ is finite.
Construct the maps $F$ and $g:=f^{-1}=F\circ h\circ q$ as in the proof of Theorem~\ref{TH_geometr-charact-alpha_neq_1}. From the fact that $x$ is a boundary regular singularity of order~$\alpha$ for $G$ it is not difficult to deduce that $$\beta:=\lim_{\substack{\Re z\to+\infty,\\|\Im z|<a}} g(z)-z$$ exists finitely for any $a\in(0,1/2)$. Choosing an appropriate value of~$C_0$ in the definition of~$F$ we may assume that $\beta\in\Real$. Then it follows that for any $a\in(0,1/2)$ there exists $u=u(a)$ such that $\{\zeta\colon\,\Re \zeta>u,\,|\Im\zeta|<a\}\subset F(h(\UD))$ and that $$\sup\big\{|\Im\zeta|\colon \zeta\in F\big(h([0,x))\big),\,\Re\zeta>u\big\}\to0\quad\text{as~}u\to+\infty.$$
This proves assertion~(i). To prove~(ii) we note that otherwise  $\{\zeta\colon {\Re \zeta>u},{|\Im\zeta|<a}\}\subset F(h(\UD))$ for some ${a>1/2}$ and $u>0$, which would contradict \cite[Proposition~4]{RodinWarschawski} applied for $R:=F(h(\UD))$, $u_n:=\log n$.
\end{proof}

\begin{remark}
Again, the above result holds also for $\alpha=0$. Namely, if $G$ admits finite nonzero angular limit at $x$, then the conclusion of Theorem \ref{TH_necessary-frac-singularity} holds with~$\alpha=0$.
\end{remark}

In case $\alpha=-1$, that is for regular poles, we give a necessary and sufficient criterion in terms of Bertilsson's condition. Bertilsson's condition~\cite[Corollary 2.36, p.\,36]{Bertilsson} is a  sufficient (but in general not necessary) condition  for a conformal map to have a $\beta$-point:
\begin{theorem}[Bertilsson]\label{Bert}
Let $h$ be a conformal map of ~$\D$.  Let $x\in\UC$ and suppose that $w_0:=\anglim_{z\to x}h(z)$ exists finitely. Fix $\gamma>1$. Denote by $\alpha_k$, $k\in\Natural$, the opening of the smallest angle with vertex at~$w_0$, containing the set ${\{w\not\in h(\UD)\colon \gamma^{-k-1}\le|w|\le\gamma^{-k}\}}$. If
\begin{equation}\label{EQ_Bertilsson-condition}
\sum_{n\in\Natural}\alpha_k<+\infty,
\end{equation}
then $x$ is a $\beta$-point of $h$.
\end{theorem}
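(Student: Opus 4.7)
The plan is to reduce to a conformal map of a half-strip via the logarithm and apply a Warschawski-type angular-derivative theorem, in the spirit of the proof of Theorem~\ref{TH_geometr-charact-alpha_neq_1} above.

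After normalizing $w_0=0$, I would unpack \eqref{EQ_Bertilsson-condition} as follows. Since $\alpha_k\le 2\pi$ and $\sum\alpha_k<\infty$, a simple pigeonholing extracts a single ray $R$ emanating from $0$ such that, for all sufficiently large $k$, the set $(\C\setminus h(\D))\cap\{\gamma^{-k-1}\le|w|\le\gamma^{-k}\}$ is contained in the sector around $R$ of opening $\alpha_k$. Choosing the corresponding branch of $\log$ on $\C\setminus R$, define $\Omega:=\log h(\D)$: this is a subdomain of a horizontal strip of height~$2\pi$, contained in a left half-plane. The geometric hypothesis translates into the statement that, at the levels $u_k:=-k\log\gamma$, the vertical cross-section $\Omega\cap\{\Re w=u_k\}$ differs from the full strip cross-section by an interval of length at most~$\alpha_k$. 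In particular, the associated Warschawski defects $\theta_k^\pm$ of \cite[Section~2]{RodinWarschawski} satisfy $\theta_k^\pm\le\alpha_k$, and since $(\theta_k^\pm)^2\le 2\pi\,\alpha_k$, the square-summability needed by \cite[Theorem~5]{RodinWarschawski} is automatic, while the mesh $u_{k+1}-u_k\equiv\log\gamma$ is trivially square-summable after a standard refinement of the subdivision.

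That theorem then yields that the conformal map $\Phi$ from the half-strip $S_0:=\{\Re w<0,\ |\Im w|<\pi\}$ onto $\Omega$, sending $-\infty$ to the prime end of $\Omega$ at $w_0=0$, has a Warschawski angular derivative at~$-\infty$; equivalently, $\Phi(w)-w$ tends to a finite limit as $\Re w\to-\infty$ non-tangentially in~$S_0$. Composing with the canonical conformal map $\sigma:\D\to S_0$ sending $\sigma(x)=-\infty$, which near~$x$ satisfies $\sigma(z)\sim 2\log(x-z)+\mathrm{const}$, and exponentiating, one obtains $h(z)-w_0\sim c\,(x-z)^2$ non-tangentially at~$x$. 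A standard application of the Koebe distortion theorem to the univalent function $h$ then upgrades this pointwise asymptotic to the bound $|h'(z)|/|x-z|\le\mathrm{const}$ on full Stolz angles at~$x$, which is precisely the $\beta$-point property.

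The principal obstacle is the last step: transferring the Warschawski conclusion ``$\Phi(w)-w$ has a non-tangential limit at $-\infty$ in $S_0$'' back to~$\D$ in a way that yields a genuine angular bound on $|h'(z)|/|x-z|$, rather than only a radial one. One must verify that the choice of branch of $\log$ and the exceptional ray~$R$ in Step~1 are consistent with the prime-end structure of~$h(\D)$ at~$w_0$; this is the point where the full summability $\sum\alpha_k<\infty$ (not merely $\sum\alpha_k^2<\infty$, which is what directly feeds the Warschawski machinery) is used, to guarantee that $R$ is actually approachable from inside $h(\D)$ and that the exponential of any Stolz substrip of $S_0$ lies non-tangentially in $h(\D)$ at~$w_0$.
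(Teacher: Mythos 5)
The paper does not prove this statement: it is imported verbatim from Bertilsson's thesis (\cite[Corollary 2.36, p.\,36]{Bertilsson}) and used as a black box in Proposition~\ref{PR_Bertilsson}, so there is no internal proof to compare against. Your Rodin\,--\,Warschawski strategy is nevertheless the natural one and is consistent with the machinery the paper deploys in Section~\ref{S geometric}; the overall plan (logarithmic change of variables, square-summable defects, angular derivative at the end of a strip, transfer back to a $(x-z)^2$ asymptotic) can be made to work. But as written there are three concrete soft spots. First, the ``simple pigeonholing'' that produces the single ray $R$ is not a pigeonhole argument: what makes the central directions of the exceptional sectors converge is that $\overline{\Complex}\setminus h(\UD)$ is \emph{connected} and meets every circle $|w-w_0|=\rho$, so the minimal sectors in consecutive annuli must overlap on the common circle, forcing $|\theta_{k+1}-\theta_k|\le(\alpha_k+\alpha_{k+1})/2$; only then does $\sum\alpha_k<+\infty$ give a limiting direction, and the sector openings must be enlarged by the (summable) tail of the drift. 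Second, the treatment of the subdivision is inconsistent: you first verify defect square-summability for the coarse subdivision $u_k=-k\log\gamma$ and then ``refine'' to fix the non-square-summable constant mesh, but refining multiplies the number of defect terms. The two estimates must be done jointly: placing $N_k\asymp 1/\alpha_k$ points in the $k$-th annulus (legitimate because the defect bound $\alpha_k$ holds at \emph{every} level inside the annulus, not just at its endpoints) gives $\sum_k N_k\alpha_k^2\asymp\sum_k\alpha_k$ and $\sum_k N_k^{-1}\asymp\sum_k\alpha_k$, and \emph{this} is where the full summability of $(\alpha_k)$ — rather than square-summability — is genuinely consumed, not in the branch-selection issue to which you attribute it.

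Third, the obstacle you flag at the end is partly real and partly illusory. The real part is the winding/branch question: $\log h(\UD)$ need not lie in a strip of height $2\pi$, and one must isolate the connected component of $\log\bigl(h(\UD)\cap\{0<|w-w_0|<\gamma^{-k_0}\}\bigr)$ containing the tail of $\log\bigl(h([rx,x))-w_0\bigr)$ and check that its cross-sections are single intervals of length between $2\pi-\beta_k$ and $2\pi$ drifting summably; this again comes from connectedness of the complement together with continuity of the access curve, and needs to be written out. The illusory part is the worry about obtaining only a radial bound: the Rodin\,--\,Warschawski conclusion is already uniform on substrips $|\Im\zeta|\le\pi-\epsilon$, substrips correspond to Stolz angles under $\sigma$, and the needed derivative statement $\Phi'\to1$ follows from the uniform convergence $\Phi(\zeta)-\zeta\to\beta$ by a Cauchy estimate on small disks — no appeal to the Koebe distortion theorem is required. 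With these three points repaired the argument closes, yielding in fact the stronger conclusion that $\anglim_{z\to x}h'(z)/(x-z)$ exists in $\Complex^*$.
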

Although  not true for general conformal mappings, in case of K\oe nigs functions we prove that Bertilsson's condition is also necessary:
\begin{proposition}\label{PR_Bertilsson}
Let $(\phi_t)$ be a one-parameter semigroup with associated infinitesimal generator $G$ and K\oe nigs function $h$. A point $x\in \de \D$ is a regular boundary pole of~$G$ if and only if~\eqref{EQ_Bertilsson-condition} holds.
\end{proposition}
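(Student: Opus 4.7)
The ``if'' direction is immediate: by Bertilsson's Theorem~\ref{Bert}, the summability~\eqref{EQ_Bertilsson-condition} implies $x$ is a $\beta$-point of~$h$, and by~\cite[Theorem~1.1]{BCD-M_regular-poles} the $\beta$-points of the K\oe nigs function $h$ coincide with the regular boundary poles of~$G$.

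For the ``only if'' direction, assume $x$ is a regular boundary pole of~$G$. By Theorem~\ref{TH_fractional}(v) with $\alpha=-1$, $w_0:=h(x)\in\C$ and $\anglim_{z\to x}(h(z)-w_0)(1-\overline{x}z)^{-2}\in\C^{*}$. I would first argue that $x$ cannot be a contact point of~$(\phi_t)$: otherwise Theorem~\ref{TH_contact-points} would make $G$ extend continuously across a contact arc~$A$ with initial point~$x$, and Proposition~\ref{PR_contact_arcs}(a1)--(a2) together with the fact (Theorem~\ref{TH_fractional}) that $x$ is not a boundary fixed point would give $\lim_{A\ni y\to x}G(y)$ finite and nonzero; Lemma~\ref{LM_Lind}(A) would then yield a finite angular limit for~$G$ at~$x$, contradicting that $|G(z)|\to+\infty$ non-tangentially at~$x$ (since $G(z)(1-\overline{x}z)$ has a nonzero finite angular limit by the regular-pole property). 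As a consequence, the Abel equation gives $w_0+t=h(\phi_t(x))\in h(\D)$ for every~$t>0$.

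Reducing to the parabolic Denjoy\,--\,Wolff case via~\cite[Proposition~2.1]{Pavel}, translation invariance $h(\D)+[0,+\infty)\subset h(\D)$ forces $\C\setminus h(\D)$ to be a union of maximal horizontal left-rays. In the logarithmic chart $F(w):=-(2\pi)^{-1}\log(w-w_0)+iC_0$ used in the proof of Theorem~\ref{TH_geometr-charact-alpha=1}, $F(h(\D))$ becomes a strip-like domain whose boundary is a graph over the real axis with asymptotes $\Im\zeta=\pm\tfrac12$. The core step is then to establish that the uniform-deviation function $\omega(u)$ from the proof of Theorem~\ref{TH_geometr-charact-alpha_neq_1} is integrable on $[u_0,+\infty)$, i.e.\ that $\partial h(\D)$ has a Dini-smooth corner of opening~$2\pi$ at~$w_0$. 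Granting this, the $k$-th dyadic annulus around~$w_0$ corresponds under~$F$ to a vertical strip of fixed width $\Delta:=\log\gamma/(2\pi)$, the angular width $\alpha_k$ is controlled by $\omega(U_k)$ (up to a multiplicative constant) with $U_k\asymp k\Delta$, and monotonicity together with integrability of~$\omega$ yield $\sum_{k}\alpha_k\lesssim\sum_{k}\omega(U_k)\lesssim\int_{u_0}^{+\infty}\omega(u)\,du<+\infty$, which is~\eqref{EQ_Bertilsson-condition}.

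The \emph{main obstacle} is the integrability of~$\omega$. For a general univalent function with a $\beta$-point the corresponding oscillation need not be integrable---this is precisely why Bertilsson's theorem is not sharp for arbitrary conformal maps. The translation invariance of $h(\D)$ is essential, and I would use it to propagate any failure of Dini-continuity of $\partial h(\D)$ near~$w_0$ via horizontal shifts to a structural conflict with the second-order expansion of~$h$ at~$x$ (combined with univalence), thereby yielding integrability of~$\omega$ by a normal-family/rescaling argument.
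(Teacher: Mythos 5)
Your ``if'' direction and your reduction to the logarithmic strip chart match the paper, but the ``only if'' direction has a genuine gap at exactly the point you flag as the main obstacle. You propose to prove that $\partial h(\UD)$ has a Dini-smooth corner of opening $2\pi$ at $w_0$, i.e.\ that the monotone deviation $\omega(u)=\sup_{\tilde u\ge u}\{\cdots\}$ is integrable, and then to deduce $\sum_k\alpha_k\lesssim\int\omega\,du$. That intermediate statement is \emph{strictly stronger} than \eqref{EQ_Bertilsson-condition} and is false for K\oe nigs functions with a regular pole. Concretely, let $h(\UD)=\Complex\setminus\big(S\cup\bigcup_j R_j\big)$, where $S=(-\infty,0]$ and $R_j$ is the horizontal left-ray emanating from $w_j:=\gamma^{-2^j}e^{i(\pi-j^{-2})}$; this domain is simply connected and invariant under $w\mapsto w+t$, $t\ge0$, hence a K\oe nigs image, and $0$ is accessible along the positive real axis. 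One checks $\alpha_k\lesssim\sum_{j:\,2^j\ge k}j^{-2}\gamma^{\,k-2^j}$, so $\sum_k\alpha_k<+\infty$ and the corresponding boundary point is a regular pole by your own ``if'' direction; yet the deviation in the annulus containing $w_j$ is of size $j^{-2}$, so the monotone envelope satisfies $\omega(U_k)\gtrsim(\log k)^{-2}$, which is not integrable. Hence the ``structural conflict'' you hope to extract from translation invariance plus the second-order expansion of $h$ does not exist, and no normal-family or rescaling argument can produce integrability of~$\omega$. (Your preliminary discussion of contact points is also not needed: translation invariance of $\Complex\setminus h(\UD)$ by left shifts is automatic and is all that is used.)

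The paper's proof bypasses the monotone envelope entirely by using the \emph{area} form of the Rodin--Warschawski criterion: by \cite[Theorem~1]{RodinWarschawski}, existence of the angular derivative at $+\infty$ forces the total defect area $\int_{u\ge0}\big[(\tfrac12-v''(u))+(\tfrac12+v'(u))\big]\,du$ to be finite. Translation invariance means every excluded point of $h(\UD)$ generates an excluded left-ray, whose image in the $\zeta$-chart is a curve along which $\tfrac12-|\Im\zeta|$ decays at a controlled exponential rate in $\Re\zeta$; consequently an excluded point realizing the angular width $\alpha_{k+1}$ in the $(k+1)$-st annulus forces a defect of area at least $M\alpha_{k+1}$ in the adjacent vertical strip $\Delta_k$ (Claim~C of the paper). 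Summing over $k$ gives $\sum_k\alpha_k<+\infty$ directly, with no monotonicity or smoothness of the boundary required. Your instinct that translation invariance of $h(\UD)$ is the essential extra input distinguishing K\oe nigs functions from general conformal maps is correct, but it has to be fed into this area estimate rather than into a regularity statement about $\partial h(\UD)$.
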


\begin{proof}
Condition~\eqref{EQ_Bertilsson-condition} is sufficient for $x$ to be a boundary regular pole of~$G$ due to Theorem~\ref{Bert} and~\cite[Theorem~1.1]{BCD-M_regular-poles}. It remains to show that~\eqref{EQ_Bertilsson-condition} is also necessary.

So assume that~$x$ is a boundary regular pole of~$G$. Arguing as in the proof of Theorem~\ref{TH_necessary-frac-singularity}, we see that $h(x)\in\Complex$ and that the domain~$F(h(\UD))$ has angular derivative at~$+\infty$ in the sense of Rodin\,--\,Warschawski. Moreover, to simplify the further argument, note that condition~\eqref{EQ_Bertilsson-condition} is invariant under locally conformal change of variables. Therefore, applying~\cite[Proposition~2.1]{Pavel} we may assume that the DW-point of~$(\phi_{s,t})$ is at~$1$. Then the ray $S:=\{h(x)-\xi\colon \xi\ge0\}$ lies in~$\Complex\setminus h(\UD)$ and hence $F(h(\UD))\subset\Pi:=\{z\colon|\Im z|<1/2\}$.

For $k\in\Natural$ denote $\Delta_k:=\big[\gamma k/(2\pi),\gamma(k+1)/(2\pi)\big]$. Further, for $u\in\Real$ denote by  $J(u)$  the connected component of $F(h(\UD))\cap\{\zeta\colon \Re \zeta=u\}$ that intersects~$\Real$ and let $v''(u):=\sup\{\Im \zeta\colon\zeta\in J(u)\}$, $v'(u):=\inf\{\Im \zeta\colon\zeta\in J(u)\}$. Then
\begin{equation}\label{EQ_alpha_k}
\frac{\alpha_k}{2\pi}=1-\Big(\inf_{u\in\Delta_k} v''(u)\,-\,\sup_{u\in\Delta_k} v'(u)\Big)\quad\text{for all~$k\in\Natural$}.
\end{equation}
Using the fact that $h(\UD)$ is invariant w.r.t. the translations $w\mapsto w+t$, $t\ge0$, we will now prove the following claim.

\Claim{C}{For all $k\in\Natural$ large enough, $\mathsf{area}\, \big\{\zeta\in\Pi\colon \Re\zeta\in\Delta_k,\,\zeta\not\in J(\Re\zeta)\big\}\ge M\alpha_{k+1}$, where $M>0$ is a constant depending only on~$\gamma$ and $\mathsf{area}\,\cdot$ stands for the two-dimensional Lebesgue measure} Since $F(h(\UD))$ has angular derivative at~$+\infty$ in the sense of Rodin\,--\,Warschawski, \eqref{EQ_alpha_k} implies that $\alpha_k\to0$ as $k\to+\infty$. Fix $k_0\in\Natural$ such that $\alpha_k<\pi/2$ for all $k>k_0$.

Now from the fact that $\{w-\xi\colon\xi\ge0\}\subset\Complex\setminus h(\UD)$ for any $w\in\Complex\setminus h(\UD)$ it follows that if $k\ge k_0$ and $\zeta\in\Pi\setminus F(h(\UD))$ with $\Re\zeta\in\Delta_{k+1}$, then
$$
\zeta\in\Big\{u+iv\colon \tfrac14<|v|<\tfrac12,\, u\le \Re\zeta,\, \sin(2\pi v)=e^{2\pi(u-\Re\zeta)}\sin(2\pi\zeta)\Big\}\subset\Pi\setminus F(h(\UD)).
$$
The statement of Claim C follows now by elementary computation from~\eqref{EQ_alpha_k}.
\vskip2mm

By \cite[Theorem~1]{RodinWarschawski}, $\mathsf{area}\,\big(\{\zeta\colon\Re\zeta\ge0\}\cap\Pi\setminus\cup_{u\ge0}J(u)\big)<+\infty$. Therefore, by Claim~C the series $\sum\alpha_k$ converges.
\end{proof}

\end{document}